\documentclass[a4paper,english, 11pt]{article}

\usepackage[utf8]{inputenc}%caractères accentués
\usepackage[T1]{fontenc}
\usepackage{babel}
\usepackage[babel]{csquotes}

\usepackage{amsmath}%substack, displaystyle
\usepackage{amsfonts}%mathbb, mathfrak, mathcal
\usepackage{amsthm}%newtheorem, proof
\usepackage{bbm}%fonction indicatrice
\usepackage{amssymb}%sphericalangle
\usepackage[top=3.2cm, bottom=3.3cm, left=3.6 cm, right=3.6 cm]{geometry}
\usepackage{mathrsfs}
\usepackage[pdftex, pdfstartview=FitW,colorlinks=true,linkcolor=blue,%
urlcolor=blue,citecolor=blue]{hyperref}
\usepackage{stmaryrd}
\usepackage{cleveref}
\crefname{thm}{Theorem}{Theorems}
\crefname{fact}{Fact}{Facts}

\usepackage{float}%choisir l'emplacement des images
\usepackage{tikz}%dessins
\usetikzlibrary{shapes.misc}
\tikzset{cross/.style={cross out, draw, 
minimum size=2*(#1-\pgflinewidth), 
inner sep=0pt, outer sep=0pt}}
\usepackage{IEEEtrantools}%equations trop longues
\usepackage{url}%tilde dans les url

\usepackage{enumitem}%choisir la numération dans enumerate

\newtheorem{th.}{Theorem}[section]
\newtheorem{thm}[th.]{Theorem}

\newtheorem{fact}[th.]{Fact}
\newtheorem{lemma}[th.]{Lemma} 

\newtheorem*{question}{Question} 
\newtheorem{proposition}[th.]{Proposition}

\newtheorem{corollary}[th.]{Corollary}
\newtheorem*{rem.}{Remark}
\theoremstyle{definition}
\newtheorem{definition}[th.]{Definition}
\newtheorem{example}[th.]{Example}

\newcommand{\N}{\mathbb{N}}
\newcommand{\Z}{\mathbb{Z}}
\newcommand{\C}{\mathbb{C}}
\newcommand{\R}{\mathbb{R}}

\newcommand{\E}{\mathbb{E}}
\newcommand{\bP}{\mathbb{P}}

\newcommand{\eps}{\varepsilon}

\newcommand{\ks}{\mathfrak{s}}
\newcommand{\kk}{\mathfrak{k}}
\newcommand{\kr}{\mathfrak{r}}
\newcommand{\ka}{\mathfrak{a}}

\newcommand{\kg}{\mathfrak{g}}

\newcommand{\kn}{\mathfrak{n}}

\newcommand {\cW} {{\mathcal W}}

\newcommand {\tG} {\tilde G}

\newcommand {\tmu} {{\widetilde \mu}}
\newcommand {\tz} {{\widetilde z}}

\newcommand {\hz} {{\widehat z}}

\newcommand {\ui} {\underline{i}}

\newcommand {\ua} {\underline{a}}
\newcommand {\ux} {\underline{x}}
\newcommand{\acts}{\curvearrowright}

\newcommand{\dd}{\,\mathrm{d}}

\DeclareMathOperator{\SEP}{(SEP)}

\DeclareMathOperator{\supp}{supp}

\DeclareMathOperator{\End}{End}

\DeclareMathOperator{\SO}{SO}

\DeclareMathOperator{\Lie}{Lie}

\DeclareMathOperator{\Span}{span}

\DeclareMathOperator{\Ker}{Ker}

\DeclareMathOperator{\id}{id}
\DeclareMathOperator{\ad}{ad}
\DeclareMathOperator{\Ad}{Ad}

\DeclareMathOperator{\leb}{leb}
\DeclareMathOperator{\Heis}{{\mathcal H}}
\DeclareMathOperator{\Haar}{Haar}
\DeclareMathOperator{\Acc}{Acc}

\title{Classification of Choquet-Deny Lie groups}
\author{Timoth\'ee B\'enard\thanks{CNRS -- LAGA, Universit\'e Sorbonne Paris Nord, 99 avenue J.-B. Cl\'ement, 93430 Villetaneuse, France.  E-mail: \href{mailto:timothee.benard@gmail.com}{timothee.benard@gmail.com}.}\  \,and Joshua Frisch\thanks{Department of Mathematics, University of California San Diego, 9500 Gilman Drive, La Jolla, CA 92093, United States. E-mail: \href{mailto:joshfrisch@gmail.com}{joshfrisch@gmail.com}.}}
\date{September 2026}

\begin{document}
	
\large

\maketitle

\begingroup
\renewcommand{\thefootnote}{}
\footnotetext{\textit{2020 Mathematics Subject Classification.}
Primary 60B15; Secondary 22E15, 22E25, 60J50.}
\addtocounter{footnote}{-1}
\endgroup

\begin{abstract}
We characterize connected Lie groups on which all adapted random walks  have only trivial bounded harmonic functions. In the nilpotent case, this answers a question of Furstenberg recorded by Guivarc'h in 1973.
\end{abstract}

\tableofcontents

\section{Introduction}

Let $G$ be a real Lie group and let $\mu$ be a probability measure on
$G$. A bounded measurable function $f:G\to\R$ is called
\emph{$\mu$-harmonic} if
\[
 f(x)=\int_G f(xg)\,\dd\mu(g)
\]
for almost every $x\in G$. The pair $(G,\mu)$ is called
\emph{Liouville} if every bounded $\mu$-harmonic function on $G$ is almost
everywhere constant. We call $\mu$ \emph{adapted} if its support
generates a dense subgroup of $G$, and \emph{spread-out} if some
convolution power $\mu^{*k}$ is not singular with respect to the Haar
measure, or equivalently has a nonzero absolutely continuous part.
The group $G$ is \emph{Choquet-Deny} if $(G,\mu)$ is Liouville for every
adapted probability measure $\mu$. We use
\emph{Choquet-Deny-spread-out} for the corresponding property restricted
to adapted spread-out measures.

It has been known since at least the 1940s that compact Lie groups are Choquet-Deny (for example,  it follows from  It\^o-Kawada \cite{KawadaIto1940}). The fact that $\Z^d$ is Choquet-Deny is due to Blackwell \cite{Bla55} in 1955. 
His result was extended to all abelian locally compact groups by Choquet and Deny \cite{ChoquetDeny1960} in 1960. The property is named after them. 

For finitely generated non-abelian groups, Dynkin and Malyutov \cite{DM61} subsequently proved that virtually nilpotent groups are Choquet-Deny.  The converse was  established by Erschler  in restriction to solvable groups \cite{Erschler2004}, then in full generality by the second author, Hartman, Tamuz, and Vahidi Ferdowsi \cite{FHTVF}. More generally, for all discrete countable groups, the Choquet-Deny property has been characterized  as the absence of ICC quotients, see respectively   Jaworski \cite{Jaw04} and \cite{FHTVF} for the ``if'' and ``only if''  directions of the equivalence.

In connected settings, Azencott \cite{AzencottLNM1970,Cartier1971} classified in 1970 all connected Lie groups that are Choquet-Deny-spread-out. Exploiting Furstenberg's boundary theory \cite{Furstenberg63}, he showed they are exactly the connected Lie groups $G$ whose radical $R$ satisfies that $G/R$ is compact and where every adjoint action $\Ad(r)\in \Ad(R)\acts \text{Lie}(R)$ has all its eigenvalues of modulus $1$. Soon after, Guivarc'h \cite{Guivarch73b} showed that Choquet-Deny-spread-out connected Lie groups are exactly those of polynomial growth. His   characterization was subsequently extended by Jaworski to connected second countable locally compact groups, not necessarily of Lie type \cite[Theorem~3.16]{Jaworski95}.

%For symmetric measures, the class of groups with trivial Poisson boundary is larger: Birg'e and Raugi proved that every symmetric, adapted, spread-out probability measure on a connected amenable Lie group has trivial Poisson boundary \cite{BR74}.

%For discrete groups, the Choquet--Deny property goes back to Blackwell \cite{Bla55}, who established it for \(\mathbb{Z}^d\). Dynkin and Malyutov \cite{DM61} subsequently proved it for finitely generated virtually nilpotent groups. More generally, Jaworski \cite{Jaw04} proved that every FC-hypercentral countable group (those with no ICC quotient) is Choquet--Deny. The second author, Hartman, Tamuz, and Vahidi Ferdowsi \cite{FHTVF19} proved the converse. Thus, a countable group is Choquet--Deny if and only if it has no ICC quotient; in particular, a finitely generated group is Choquet--Deny if and only if it is virtually nilpotent.

Beyond compact and abelian groups, all situations described above refer to spread-out measures. Classifying Choquet-Deny Lie groups in the \emph{non-spread-out case} remained mysterious, even in the nilpotent setting. As reported by Guivarc'h  \cite{Guivarch73b} in 1973, the following question was asked by Furstenberg.
\begin{question}[Furstenberg, Guivarc'h]
Is every nilpotent Lie  group Choquet-Deny? 
\end{question}

In  \cite{Guivarch73b}, Guivarc'h  answered the question positively for nilpotent groups of step at most $2$, or arbitrary step under an additional positive moment assumption on the driving measure $\mu$. Raugi \cite{Raugi04} later claimed to establish the unrestricted
result, i.e. that every locally compact second countable nilpotent group is Choquet-Deny. However, Raugi's paper  carries a fatal mistake\footnote{Lemma~2.5 in Raugi \cite{Raugi04} is false, as noted by \cite[p.~160]{JR07}. },   invalidating the proof. 
In the present paper we provide an answer to this question of Furstenberg and Guivarc'h.
Against all expectations,  contrary to the finitely generated/spread-out situations, and contrary to Raugi's prediction which was supported by Guivarc'h's work in step $2$, we show that \emph{most nilpotent connected Lie groups are  not Choquet-Deny}!  In fact, we characterize exactly when this is the case in terms of the lower central series of the group.

\begin{thm}[Choquet-Deny nilpotent Lie groups]\label{CD-Nilpotent}
Let $G$ be a connected nilpotent real Lie group. Then $G$ is
Choquet-Deny if and only if the iterated commutator group
$[G,[G,G]]$ is bounded in $G$.
\end{thm}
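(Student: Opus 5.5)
We treat the two directions separately. Throughout, recall that a connected nilpotent Lie group $G$ has a unique maximal compact subgroup $K$, which is central and connected (a torus), and $G/K$ is simply connected nilpotent. The condition "$[G,[G,G]]$ bounded" means $[G,[G,G]]\subseteq K$. So the positive direction amounts to: if $G/K$ has step at most $2$ and $K$ is central, then $G$ is Choquet-Deny.

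\emph{The "if" direction.} Suppose $[G,[G,G]]\subseteq K$. The plan is to adapt Guivarc'h's step-$2$ argument to a central compact extension of a step-$2$ group. First quotient by $K$: a bounded $\mu$-harmonic function $f$ on $G$ can be averaged over the central compact subgroup $K$. Since $K$ is central, translates $f(\cdot\, k)$ are still $\mu$-harmonic. A Choquet-Deny-type argument for the compact central part (via the martingale convergence $f(xZ_n)$ together with the Itô–Kawada equidistribution on $K$) should show that $f$ is $K$-invariant. This reduces the problem to $G/K$, a simply connected step-$\le 2$ nilpotent group. There Guivarc'h \cite{Guivarch73b} already proves the Liouville property for all adapted $\mu$. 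The delicate point is the reduction to $K$-invariance. I would use the standard fact that for a central subgroup $Z$, bounded harmonic functions satisfy $f(xz)=f(x)$ for $z$ in the closed subgroup of "$\mu$-periods" (the Choquet-Deny theorem applied to the induced abelian action). I would combine this with the observation that the commutator map $G/K\times G/K\to [G,G]/K$ is bilinear, so that $[G,G]$ is covered by periods.

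\emph{The "only if" direction.} This is the main obstacle and the surprising part. Assume $[G,[G,G]]$ is unbounded. First, pass to a quotient: the Liouville property passes to quotients, since harmonic functions pull back and adaptedness is preserved. So it suffices to find a quotient $H$ of $G$ carrying a non-Liouville adapted measure. Quotienting by $[G,[G,[G,G]]]$ and by a suitable closed central subgroup, I would reduce to a simply connected step-$3$ group whose third lower central term is $\R$. Further quotients should reduce this to a minimal model, e.g.\ the $4$-dimensional filiform group or a related group with $\Lie$ spanned by $X,Y,[X,Y],[X,[X,Y]]$ (and possibly one must also treat $[Y,[X,Y]]$).

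On this model I would construct a non-spread-out adapted measure $\mu$, supported on countably many points $\exp(t_iX)$ and $\exp(s_jY)$ with rapidly decaying weights and rapidly growing or rationally-independent scales. The aim is that along the random walk the third-order coordinate $[X,[X,Y]]$ grows (or oscillates) in a way that is not smoothed by lower-order terms. The goal is an invariant like $\operatorname{sign}$ or the fractional part of a normalized top coordinate that converges almost surely but whose limit depends nontrivially on the starting point, giving a nonconstant bounded harmonic function. The key estimate is to show that the cubic term $\sum x_i x_j y_k$ dominates the fluctuations, so that its limit is not tail-trivial. This uses the heavy tails and the non-spread-out structure, which allow scales where a few huge jumps dominate. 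Since step $2$ is Liouville, the construction must genuinely exploit the trilinear term, and I expect tuning the distribution here to be the hardest step.
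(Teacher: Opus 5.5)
Both directions have genuine gaps.

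\emph{The ``if'' direction.} Your argument rests on showing that a bounded harmonic $f$ is invariant under the maximal torus $K$, but you give no mechanism for this. The walk does not project onto $K$: $K$ is a central subgroup, not a quotient, and $G$ need not split off $K$ (e.g.\ $\Heis/\Z e_3$). So It\^o--Kawada has nothing to act on.

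The central-period mechanism also fails here. It needs pairs $g,h$, both $\mu^{*k}$-typical, with $g^{-1}h$ central. In step $2$ the pair $(ab,ba)$ gives the central element $[b^{-1},a^{-1}]$, which is what makes step $2$ easy. In step $3$ this element is no longer central. Central differences such as $(abba)^{-1}baab$ do exist, but they come from coincidences that are $\mu^{\otimes 4}$-null as soon as $\mu$ is non-atomic. Bilinearity modulo $K$ cannot turn the non-central group $[G,G]$ into central periods. Note also that $K$ can be much larger than $\overline{G^{[3]}}$ (e.g.\ a direct torus factor), so $K$-invariance is an output of the theorem, not a first step.

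The missing idea is to work with conjugated commutators.
\begin{itemize}
\item For fixed $u,v$, the map $g\mapsto[g,[u,v]]$ is a homomorphism into the central torus $\overline{G^{[3]}}$. Hence $\Ad(b_1\cdots b_n)[u,v]$ is $[u,v]$ translated by a random walk on a compact torus.
\item By recurrence, for almost every $b$ this sequence accumulates at $[u,v]$ and at $\Ad(w)[u,v]$ for $w\in\supp\mu$.
\item Write $b_1\cdots b_nuv=\Ad(b_1\cdots b_n)[u,v]\,b_1\cdots b_nvu$. Both $f(xb_1\cdots b_nuv)$ and $f(xb_1\cdots b_nvu)$ tend to $f_b(x)$ (\Cref{limit-decomp}).
\item Left uniform continuity then gives right-invariance of $f$ under these accumulation points (\Cref{extra-inv-b}, \Cref{extra-inv}). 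This covers the non-central $[u,v]$ and $[w,[u,v]]$.
\item By trilinearity, the closed subgroup of periods contains $G^{[3]}$, is normal, and has abelian quotient. Abelian Choquet--Deny finishes, and Guivarc'h's step-$2$ theorem is never needed.
\end{itemize}

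\emph{The ``only if'' direction.} What you write is a plan, not a proof: no measure is specified, no estimate is proved, and the limiting invariant is only hoped for. Two concrete steps also fail.

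First, the reduction to a small list of minimal quotients does not work. The algebra with $[X_1,X_2]=X_3$, $[X_1,X_3]=X_5$, $[X_2,X_4]=X_5$ has step $3$ and centre $\R X_5=\kg^{[3]}$. Since every nonzero ideal meets the centre, every proper quotient has step at most $2$. Adding Heisenberg pairs bracketing to $X_5$ gives such examples in arbitrarily large dimension. The filiform algebra sits inside only as a subalgebra, and a non-Liouville measure on a subgroup is not adapted on $G$.

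Second, supporting $\mu$ on $\exp(\R X)\cup\exp(\R Y)$ creates many coincidences among products, since the $\exp(t_iX)$ commute. Such coincidences are exactly what a separation argument must avoid.

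The paper's key ingredients are absent from your proposal.
\begin{itemize}
\item After quotienting by $T_G$ one has $[G,G]\not\subseteq Z(G)$. By \Cref{avoidance} and \Cref{Iww'-large}, generic tuples $\ux$ then have no central difference $\ux_{\omega,\omega'}$.
\item For suitable $g$, the maps $k\mapsto g^{\pm k}\ux_{\omega,\omega'}g^{\pm k}$ are nonconstant polynomials, hence proper. This is the simultaneous escape property.
\item This feeds an FHTVF-type fractal symmetric measure. Level $k$ has weight proportional to $k^{-5/4}$, and with probability at least $3/4$ the maximal level among the first $P$ increments is unique. Its switching element $\sigma_k$, chosen after all lower-level elements, forces products issued from $h$ and from $\id$ to stay at distance $>1$.
\item Foguel's $0$--$2$ law (\Cref{CD-vs-mixing}) then yields non-Liouvilleness.
\end{itemize}
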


Here, we use \emph{bounded} as a synonym for relatively compact. As a consequence\footnote{Indeed, every bounded  subgroup of a nilpotent  Lie group is central
(\Cref{max-comp-in-nilp}).} of \Cref{CD-Nilpotent}, every connected nilpotent Lie group  is Choquet-Deny if it has step at most $2$, and is non-Choquet-Deny if it has step at least 4. In step $3$, both behaviors occur depending on the group, as we see in the next example.

\begin{example}
Let $\tG$ be a simply connected nilpotent Lie group of step $3$, and
let $\Lambda$ be a lattice in the  group
$\tG^{[3]}=[\tG,[\tG,\tG]]$ (which we may see as a vector space). Then $G=\tG/\Lambda$ is Choquet-Deny,
because $G^{[3]}=\tG^{[3]}/\Lambda$ is compact, whereas its universal
cover $\tG$ is not Choquet-Deny.
\end{example}

This contrasts with the case of finitely generated  groups, for which being Choquet-Deny amounts to being virtually nilpotent \cite{DM61, FHTVF}. It also differs from Azencott's result establishing that
connected nilpotent Lie groups are always Choquet-Deny-spread-out \cite{AzencottLNM1970}. 
%It also shows that singular measures are not merely a technical complication: they can detect global topological information invisible to spread-out measures.

\bigskip
Our main result reaches beyond the nilpotent setting and gives a simple
characterization of the Choquet-Deny property for \emph{every} connected real
Lie group. Moreover, failure to be Choquet-Deny can  always be witnessed by a symmetric adapted measure.

\begin{thm}[Choquet-Deny Lie groups]\label{main-thm}
Let $G$ be a connected real Lie group. The following conditions are equivalent:
\begin{enumerate}[label=(\roman*)]
\item $G$ is Choquet-Deny;
\item every  $x\in [G,G]$ has bounded conjugacy class
$\{gxg^{-1}:g\in G\}$.
\end{enumerate}
Moreover, if $(i)$ and $(ii)$ fail, then $G$ admits a \emph{symmetric} non-Liouville adapted probability measure on $G$.\end{thm}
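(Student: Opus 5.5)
We prove the two implications separately. The direction (ii)$\Rightarrow$(i) goes through a structural reduction to a central extension of a Choquet-Deny-spread-out-type group. The direction (i)$\Rightarrow$(ii) goes through an explicit construction of a symmetric non-Liouville measure, which is the new phenomenon and the main difficulty.

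\textbf{(ii)$\Rightarrow$(i).} First we analyse groups satisfying (ii). Let $N$ be the closure of the set of elements with bounded conjugacy class; it is a closed normal subgroup containing $[G,G]$. We expect (ii) to force the following structure. $G/R$ is compact. The nilradical has step at most $2$ modulo a compact subgroup. $[G,[G,G]]$ is contained in a compact normal subgroup $K$, which is central in the identity component because bounded subgroups of nilpotent groups are central (\Cref{max-comp-in-nilp}) and connected groups act trivially on compact abelian normal subgroups up to a torus argument.

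Next we quotient by $K$. Since $K$ is compact, bounded $\mu$-harmonic functions on $G$ correspond to those on $G/K$ after averaging over $K$ (a standard Choquet--Deny argument for compact normal subgroups). So we may assume $[G,[G,G]]$ is trivial modulo a compact part, i.e.\ $G$ is a central extension $1\to Z\to G\to A\to 1$ with $A=G/[G,G]$-like abelian-by-compact and $[G,G]\subseteq Z$.

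The final step is a Guivarc'h-type argument for step $2$. Let $f$ be a bounded harmonic function. By the Choquet--Deny theorem for the abelian group $Z$ (central translations commute with the random walk), $f$ is invariant under $Z$ via a Jaworski/Raugi-style argument: compare $f(zx)$ and $f(x)$ using the fact that commutators $[g,h]$ of increments lie in $Z$ and central shifts are asymptotically absorbed. Then $f$ descends to the abelian quotient, where the Choquet--Deny theorem applies. The compact Levi part is handled by It\^o--Kawada. I would follow Guivarc'h's step-$2$ proof, replacing moment assumptions by the fact that $[G,G]$ is central, so that the key identity $f(xg)=f(xgz)$ holds for $z$ in the closed subgroup generated by commutators of the support.

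\textbf{(i)$\Rightarrow$(ii), the main obstacle.} Suppose some $x\in[G,G]$ has unbounded conjugacy class. Structurally this yields one of two cases. Either there is a non-compact semisimple quotient or a non-unipotent radical action, giving a quotient that is not Choquet-Deny-spread-out by Azencott; a symmetric spread-out non-Liouville measure then exists by Furstenberg boundary theory for non-amenable or exponential growth groups, and it lifts. Or $G$ has a nilpotent-type quotient with $[G,[G,G]]$ unbounded, reducing to a simply connected step-$3$ situation: the free $3$-step-like quotient $H$ with $H^{[3]}\cong\R$ non-compact.

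In the latter case I would construct a countably supported symmetric measure $\mu=\sum_n p_n(\delta_{g_n}+\delta_{g_n^{-1}})/2$ with generators at rapidly growing scales, such that the walk's projection to $H^{[3]}$ has a coordinate $\phi(X_n)$ that converges almost surely while having a non-degenerate limit. The limiting variable gives a nonconstant bounded harmonic function $x\mapsto \mathbb P_x(\lim \phi >0)$.

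The mechanism is the following. Choose rare large jumps $a_n$ in one direction together with tiny elements $b_n$ whose commutators $[a_n,[a_n,b_n]]$ have size roughly $1$ in the $\R$ direction. The third-order term then contributes $\epsilon_n$-random signs that do not cancel, and the sum converges because of the scale separation. At the same time, the support generates a dense subgroup. This mirrors the Frisch--Hartman--Tamuz--Vahidi Ferdowsi switching construction for ICC groups, adapted to the continuous conjugacy-unbounded direction. Proving the almost sure convergence and the non-triviality of the tail event, which requires controlling cross terms via the BCH formula in step $3$, is where I expect the real work. I would first try a fixed Heisenberg-type step-$3$ filiform group and then transfer through quotients and lifts.
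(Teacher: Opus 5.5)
\textbf{Direction (ii)$\Rightarrow$(i).} The structure you expect is false. In $\SO_2\ltimes\R^2$, and in $G_2$ of \Cref{ex-SO2Heis}, condition (ii) holds: conjugacy classes of commutators are circles, respectively bounded. Yet $[G,[G,G]]=[G,G]$ is unbounded, non-central, and lies in no compact normal subgroup. So you never reach a central extension with $[G,G]\subseteq Z$, and Guivarc'h's step-$2$ argument does not apply.

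The missing idea is how to extract periods from a commutator $[u,v]$ whose conjugates $\Ad(b_1\cdots b_n)[u,v]$ keep moving in a bounded but non-trivial orbit. Accumulation points of this sequence are periods of the limit function $f_b$ (\Cref{extra-inv-b}). To get periods of $f$ itself, however, you need accumulation points common to almost every $b$. The paper obtains these from \Cref{accumulation}: the accumulation set is almost surely a fixed $\Ad(G)$-invariant closed set, because $G/C_{u,v}$ is Liouville by induction on dimension ($\dim C_{u,v}>0$ by \Cref{centralizer-bnd}). \Cref{convid} then forces the resulting normal period group to be non-trivial, and \Cref{discrete-central-fg} closes the induction.

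Separately, quotienting by a compact normal subgroup is not a standard step for non-spread-out measures. It fails for non-connected groups: take $(\R^2/\Z^2)\rtimes_A\Z$ with $A\in\mathrm{SL}_2(\Z)$ hyperbolic, and $\mu=\frac12(\delta_{(0,1)}+\delta_{(v,1)})$ with $v$ generic on the contracting line of $A$. The torus coordinate of the walk converges, giving harmonic functions that are not torus-invariant, while the image walk on $\Z$ is deterministic. For connected groups, the paper obtains insensitivity to compact normal quotients only as a consequence of the theorem.

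\textbf{Direction (i)$\Rightarrow$(ii) and symmetry.} Your first case cannot give symmetric witnesses for amenable groups. By Azencott and Birg\'e--Raugi, every symmetric adapted spread-out measure on an amenable connected Lie group is Liouville. So, for example, the affine group of \Cref{ex-affinegroup} has no symmetric spread-out witness.

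Moreover, Azencott's obstruction is an eigenvalue of modulus $\neq 1$, not non-unipotence, and your dichotomy misses $G_1=\SO_2\ltimes\Heis$ of \Cref{ex-SO2Heis}. It has polynomial growth, hence is Choquet-Deny-spread-out. Since $[G_1,G_1]=[G_1,[G_1,G_1]]=\Heis$, its largest nilpotent quotient is $\SO_2$. Yet it is not Choquet-Deny, so even (i)$\Rightarrow$(ii) without symmetry is not covered by your argument.

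The paper avoids any case split at the level of measures. Via \Cref{prop-sim-switch} it shows that some quotient satisfies $\SEP$; the cases are nilpotent, solvable with $[S,S]$ unbounded (where $G_1$ sits) or with unbounded action on $S/S'$, semisimple, and general. A single symmetric, non-spread-out fractal measure built from switching sequences then keeps $\delta_h*\tmu^{*n}$ and $\tmu^{*n}$ macroscopically apart. This contradicts the Liouville property via Foguel's $0$--$2$ law (\Cref{CD-vs-mixing}); no convergent coordinate is ever exhibited.

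Finally, your step-$3$ mechanism is only a heuristic and, as stated, inconsistent. Independent contributions of size about $1$ with random signs do not sum to a convergent limit. Nothing in your sketch controls the BCH cross terms or the non-degeneracy of the limit. That is exactly the work done in the paper by the $\SEP$ estimates \eqref{SEP-applied}--\eqref{eq-expansion-pert}, combined with the lemma of Frisch--Hartman--Tamuz--Vahidi Ferdowsi that the maximal scale appears only once.
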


Notably,  \Cref{main-thm} implies that being Choquet-Deny is stable  under passage to connected  closed subgroups.

It is interesting to compare the final claim about symmetry with  the spread-out theory. Indeed, a connected Lie group is Choquet-Deny in restriction to symmetric spread-out measures if and only if it is amenable \cite{AzencottLNM1970, BR74}. 
Therefore,  any amenable connected Lie group of exponential growth fails to be Choquet-Deny-spread-out but this cannot be witnessed by exhibiting a symmetric non-Liouville spread-out measure. That we may further  impose  symmetry in \Cref{main-thm}  resonates with the discrete setting \cite{FHTVF}.

%For symmetric measures, the class of groups with trivial Poisson boundary is larger: Birg'e and Raugi proved that every symmetric, adapted, spread-out probability measure on a connected amenable Lie group has trivial Poisson boundary \cite{BR74}.
\bigskip

When the structure of $G$ is specified, the criterion from \Cref{main-thm} (ii) can take other forms. We record two useful instances.

\begin{corollary}[Solvable case]\label{solv-CD-crit}
Assume that $G$ is solvable, and set $S=\overline{[G,G]}$. Then $G$ is
Choquet-Deny if and only if the conjugation action of $G$ on the
abelianization $S/\overline{[S,S]}$ has bounded orbits and
$[S,S]$ is bounded.
\end{corollary}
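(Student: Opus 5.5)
We deduce the corollary from \Cref{main-thm}, so it suffices to show that, for $G$ connected solvable and $S=\overline{[G,G]}$, condition (ii) of \Cref{main-thm} holds if and only if $G$ acts on $S/\overline{[S,S]}$ with bounded orbits and $[S,S]$ is bounded. Recall that in a connected solvable Lie group $[G,G]$ is a connected nilpotent subgroup, so $S$ is a connected nilpotent Lie group. Bounded conjugacy classes form a subgroup, and by continuity this extends to closures when the bound is uniform on compact sets. So (ii) amounts to: $G$ acts on $S$ by conjugation with bounded orbits.

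\emph{(ii) $\Rightarrow$ criterion.} Passing to the quotient $S/\overline{[S,S]}$ preserves boundedness of orbits, which gives the first condition. For the second, take $x,y\in S$. Since the $G$-orbit of $x$ is bounded, the set $\{sxs^{-1}\}_{s\in S}$ is bounded. Hence the commutators $[s,x]$ with $s\in S$ lie in a bounded set. I would then use the fact that a connected nilpotent Lie group in which every element has bounded conjugacy class has bounded commutator subgroup. The route is to look at the Lie algebra $\mathfrak s$ of $S$: each $\Ad(\exp Y)=e^{\ad Y}$ is unipotent, so bounded orbits in the adjoint representation force $\ad Y$ to vanish modulo the maximal compact central part. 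Concretely, pass to $S/K$ with $K$ the maximal compact subgroup, which is central by \Cref{max-comp-in-nilp}. On $S/K$ every element has bounded conjugacy class. A unipotent one-parameter group $t\mapsto \exp(t\,\ad Y)$ has unbounded orbits unless it acts trivially, so conjugation by $S/K$ is trivial on $S/K$ modulo compact pieces; inducting along the upper central series gives $S/K$ abelian. Then $[S,S]\subset K$ is bounded.

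\emph{Criterion $\Rightarrow$ (ii).} Set $N=\overline{[S,S]}$. Since $[S,S]$ is bounded, $N$ is compact. Being a compact subgroup of the nilpotent group $S$, it is central in $S$ by \Cref{max-comp-in-nilp}. It is also normal in $G$, and $G$ acts on the compact group $N$ by automorphisms, so every orbit in $N$ is bounded. Now take $x\in S$ and $g\in G$. By the first assumption, $gxg^{-1}=y_g n_g$ with $y_g$ ranging in a bounded set (lifting bounded sets through $S\to S/N$, with $N$ compact) and $n_g\in N$ compact. Hence $\{gxg^{-1}\}$ is bounded, which gives (ii).

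\emph{Main obstacle.} The delicate step is the lemma in the forward direction: in a connected nilpotent Lie group $S$, bounded conjugacy classes force $[S,S]$ to be relatively compact. The case of non-simply-connected $S$ with compact central tori needs care. I would first handle the case of $S$ simply connected modulo its maximal torus using unipotence of $\Ad$. I would then argue that $[S,S]$ lies in the preimage of the trivial commutator, which is compact. I must also check that $[S,S]$, not only its closure, is the relevant object; boundedness is insensitive to closure, so this is fine.
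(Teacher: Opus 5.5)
The reverse direction (criterion $\Rightarrow$ (ii)) is correct as written. The forward direction, however, rests on an unproved reduction.

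You assert that (ii) ``amounts to'' every $x\in S=\overline{[G,G]}$ having bounded conjugacy class, ``by continuity \dots\ when the bound is uniform on compact sets''. You then use this twice, in ``take $x\in S$; since the $G$-orbit of $x$ is bounded \dots''. Nothing in your argument supplies that uniformity. Take an exhaustion $(C_n)$ of $G$ by compact sets. The elements with bounded conjugacy class form the increasing union of the closed sets $F_n=\{x\in G:\ gxg^{-1}\in C_n \text{ for all } g\in G\}$. Knowing $[G,G]\subseteq\bigcup_n F_n$ says nothing about a limit of points $x_j\in[G,G]\cap F_{n_j}$ with $n_j\to\infty$. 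Boundedness of a single set is insensitive to closure, as you correctly note for $[S,S]$, but here you are passing a pointwise property with possibly non-uniform bounds to limits. This step is exactly the implication $b)\Rightarrow a)$ of \Cref{prop-sim-switch}. The paper does not treat it as trivial: it obtains it only through $b)\Rightarrow c)\Rightarrow a)$, i.e.\ via the escape property (\Cref{solv-SEP} in the solvable case).

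In the solvable case you can close the gap by working only with $x\in[G,G]$ and using density.

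\emph{Commutator subgroup.} Let $K=T_S$, so $S/K$ is a simply connected nilpotent group. For $x\in[G,G]$, the conjugacy class in $S/K$ of its image $\bar x$ is bounded. Hence for each $Y\in\Lie(S/K)$, the map $t\mapsto \exp(tY)\bar x\exp(-tY)$ is polynomial in exponential coordinates and bounded, so it is constant. Therefore $\bar x$ is central, and no induction along the central series is needed. Since $Z(S/K)$ is closed and contains the dense image of $[G,G]$, $S/K$ is abelian and $[S,S]\subseteq K$ is bounded.

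\emph{Orbits.} Now $S/K$ is a vector space on which $G$ acts linearly. The vectors with bounded $G$-orbit form a linear, hence closed, subspace that contains the dense image of $[G,G]$, so every orbit on $S/K$ is bounded. Since $S\to S/K$ is proper, every element of $S$ has bounded conjugacy class; in particular the orbits on $S/\overline{[S,S]}$ are bounded. This proves your reduction in the solvable case.

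\emph{Comparison with the paper.} With this repair, your route is a genuinely more elementary alternative. The paper deduces the corollary from \Cref{main-thm} and \Cref{solv-SEP}. There, ``criterion fails $\Rightarrow$ some commutator has unbounded class'' is obtained by constructing switching sequences (\Cref{solvable-SEP1}, \Cref{solvable-SEP-2}). You get the same implication from polynomiality in nilpotent groups and linear algebra. Your reverse direction coincides with the implication the paper calls clear.
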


\begin{proof}Consequence of \Cref{main-thm} and \Cref{solv-SEP}. 
\end{proof}

\begin{example} \label{ex-SO2Heis}
Let $\Heis=(\R^2\times \R,*)$ be the Heisenberg group, i.e. the set $\R^2\times \R$ equipped with the  group structure 
$(v,s)*(w,t)=(v+w, \,s+t+v\wedge w)$ where $\wedge$ stands for the determinant $v\wedge w=v_{1}w_{2}-v_{2}w_{1}$. Let $\SO_2$ act on $\Heis$ by automorphisms via the formula
$k.(v,s)=(kv,s)$. Set $ e_3=(0,0,1)\in \Heis$, and
\[
 G_{1}=\SO_2\ltimes\Heis,
 \qquad
 G_{2}=\SO_2\ltimes(\Heis/\Z e_3).
\]
Then $G_{2}$ is Choquet-Deny whereas $G_{1}$ is not. Moreover,
$[G_{2},[G_{2},G_{2}]]=[G_{2},G_{2}]=\Heis/\Z e_3$ is unbounded. 
\end{example}

This example illustrates that the bounded three-commutator criterion, characterizing  the Choquet-Deny property  for nilpotent groups, is not valid for solvable groups. In passing, it also shows that a compact extension of a Choquet-Deny Lie group (here $G_{1}$ extending $\Heis$) can fail to be Choquet-Deny.  In contrast,  \Cref{main-thm} implies that being Choquet-Deny is insensitive to quotienting by a normal compact subgroup $K \unlhd G$, see also Jaworski-Raja \cite[Corollary~4.4]{JR07} for a related statement under a distality assumption.

\begin{corollary}[Nilpotent-by-compact case]\label{cpct-by-nilp-cor}
Let $G=K\ltimes N$, where $K$ is a connected compact Lie group and
$N$ is a connected nilpotent Lie group. Then $G$ is Choquet-Deny if
and only if the three subgroups
\[
 [[K,K],N],\qquad [[K,N],N],\qquad [[N,N],N]
\]
are bounded.
\end{corollary}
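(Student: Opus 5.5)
By \Cref{main-thm}, $G$ is Choquet-Deny if and only if every $x\in[G,G]$ has a bounded conjugacy class. The plan is to compute $[G,G]$ for $G=K\ltimes N$ and show that this condition is equivalent to the boundedness of the three listed subgroups.

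\textbf{Computing $[G,G]$.} Since $K$ is compact connected, $K=Z(K)_0[K,K]$, where $[K,K]$ is semisimple and hence perfect. We get
\[
[G,G]=[K,K]\cdot [K,N][N,N],
\]
and $[K,N][N,N]$ is a connected normal subgroup of $N$. Write $M=[K,N][N,N]$.

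\textbf{Necessity.} Assume (ii) of \Cref{main-thm}. Then for every $x\in M$, the orbit $\{nxn^{-1}:n\in N\}$ is bounded. The commutators $[x,n]=xnx^{-1}n^{-1}$ are then bounded for fixed $x$, and so are the commutators $[n,x]$. Because $[[K,N],N]$ and $[[N,N],N]$ are generated by such commutators with $x$ in the respective subgroups, they are contained in the closed subgroup generated by these bounded sets. We need a fact of the type \Cref{max-comp-in-nilp}: in a connected nilpotent Lie group, elements with bounded $N$-conjugacy class form a subgroup $B$ whose commutator $[B,N]$ lies in the maximal compact (central) subgroup. I would prove this by passing to $N/C$, with $C$ the maximal compact subgroup, which is simply connected. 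There a bounded conjugacy orbit of a unipotent-type action must be a single point, since polynomial orbits that are bounded are constant. Hence $x$ is central modulo $C$, and $[M,N]\subset C$ is bounded. This yields the boundedness of the last two subgroups.

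For $[[K,K],N]$: take $x\in[K,K]$. Its conjugacy class under $N$ is bounded, so the same polynomial argument in $G/C$ shows that $n x n^{-1}x^{-1}\in C$, i.e.\ $[K,K]$ acts trivially on $N/C$. Thus $[[K,K],N]\subset C$ is bounded.

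\textbf{Sufficiency.} Assume the three subgroups are bounded, hence contained in the compact central subgroup $C$ of $N$, which is also normal in $G$. Elements of $[K,K]$ act trivially on $N$ modulo $C$. Since $C$ is a central torus in $N$, on which connected $K$ acts trivially by rigidity of torus automorphisms, the $G$-conjugacy class of $x\in[K,K]$ is contained in $K\cdot x\cdot C$-type sets and is therefore bounded. For $y\in M$, the $N$-conjugates of $y$ lie in $yC$, and conjugating by $K$ keeps them in the compact set $K\cdot(yC)\cdot K^{-1}$ (more precisely, $kyCk^{-1}$ with $k$ ranging over the compact group $K$). So every element of $[G,G]=[K,K]M$, which is a product of such elements, has a bounded class, and \Cref{main-thm} concludes.

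\textbf{Main obstacle.} The delicate step is the necessity direction: passing from ``each individual conjugacy class is bounded'' to ``the generated commutator subgroup is bounded''. This requires the polynomial/unipotent rigidity on $N/C$, together with care about non-closed subgroups: boundedness must be applied to the closures, and one has to check that $[K,N][N,N]$ is indeed what $[G,G]\cap N$ contains.
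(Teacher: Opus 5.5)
Your proposal is correct and follows essentially the same route as the paper. For sufficiency, elements of $[K,K]$, $[K,N]$, $[N,N]$ have bounded $N$-orbits, and compactness of $K$ then bounds their $G$-orbits. For necessity, you pass to the simply connected $N/T_N$, where the bounded polynomial $t\mapsto e^{tv}xe^{-tv}x^{-1}$ must be constant; this forces every element of $[G,G]$ to centralize $N$ modulo $T_N$.

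Your remark about the ``closed subgroup generated by these bounded sets'' does not by itself prove anything. The polynomial-rigidity step you give afterwards is what carries the necessity direction, exactly as in the paper.
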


The proof of \Cref{cpct-by-nilp-cor} is given at the end of \Cref{Sec-onlyif}.

\bigskip

\noindent{\bf Further related work.} 
%Although they are not formulated in the Choquet-Deny framework, other prior works have studied triviality of bounded harmonic functions on groups (equivalently triviality of Poisson boundary). Let us mention a few.
%As previously mentioned, for finitely generated d iscrete groups, Dynkin and Malyutov showed that virtually nilpotent groups are Choquet-Deny
%\cite{DynkinMalyutov1961}. Erschler established the converse among
%finitely generated solvable groups \cite{Erschler2004}, and Frisch,
%Hartman, Tamuz, and Vahidi Ferdowsi obtained the complete classification:
%a finitely generated group is Choquet-Deny if and only if it is virtually
%nilpotent \cite{FHTVF}. More generally, their criterion for a countable
%discrete group is the absence of an ICC quotient. 
Furstenberg showed that every full-support probability measure on a nonamenable group has a nontrivial Poisson boundary \cite{Fur73} and conjectured the converse. 
This conjecture was proved independently by Rosenblatt
\cite{Ros81} and Kaimanovich and Vershik
\cite{KV83} for locally compact
second-countable groups: every amenable such group admits
a full-support probability measure with trivial Poisson boundary.
Moreover, the measure can be chosen symmetric.

Restricting  $\mu$  to have finite support, Avez \cite{Avez74} showed that every adapted finitely supported measure on a discrete group of subexponential growth is Liouville. Kaimanovich and Vershik \cite{KV83} found solvable groups of exponential growth where every adapted symmetric finitely supported measure is Liouville and  other solvable groups where every adapted symmetric finitely supported measure is non-Liouville. Bartholdi and Erschler \cite{bartholdi2017poisson} found an example of a group of exponential growth where all finitely supported measures have a trivial Poisson boundary. 

Finally, for totally disconnected  groups, the Choquet-Deny property has instead
been related by Raja \cite{Raja05} and Jaworski-Raja  \cite{JR07} to polynomial growth, distal inner automorphisms,
contraction groups, and the SIN property.

\bigskip

\noindent{\bf Proof strategy.} The two implications in \Cref{main-thm}
require rather different
ideas. 

The sufficient direction $(ii) \implies (i)$  builds on the  method of periods
of harmonic functions, developed notably by Azencott
\cite[Chapter~IV]{AzencottLNM1970}, Guivarc'h
\cite[Chapter~V]{Guivarch73b}, and Raugi
\cite[Section~6]{Raugi1977}, \cite{Raugi1978}.
A period is a group element under whose translation a  function
is invariant. Identifying periods of bounded harmonic
functions allows one to factor them through suitable quotients and thus reduce the complexity of the group at hand. In our proof, we show accumulation points of
randomly conjugated commutators yield right periods of martingale
limit functions. We then argue by induction to show the identified periods do not depend on the limit function, and thus yield  periods of the original
harmonic function. Applied iteratively, this
establishes  $(ii) \implies (i)$ in \Cref{main-thm}.

%For the sufficient direction, bounded conjugacy classes produce additional periods of bounded harmonic functions; an induction on the dimension of the group, combined with Lie structure theory, then forces every such function to be constant. 

For the direction $(i) \implies (ii)$, we argue by contrapositive, assuming $(ii)$ fails, i.e. some 
commutator has an unbounded conjugacy class. We promote this to a generic 
\emph{simultaneous escape property}: the multiplicative differences
between many prescribed words in generic enough elements can all be sent to infinity or to the identity by applying two-sided multiplication by $\sigma^{\pm1}_{k}$ where $(\sigma_{k})_{k\geq0}$ is a well-chosen sequence in $G$. We use it to construct a
symmetric fractal probability measure whose support generates a dense
subgroup but whose induced random walk on $G$  retains a macroscopic memory of its
starting point. Foguel's criterion \cite{Foguel75} then gives a
nonconstant bounded harmonic function.  Note the  argument does not identify the
Poisson boundary or exhibit an explicit non-trivial harmonic function. Our proof also resonates with \cite{FHTVF}, although the latter is about the discrete case in which  conclusions are very different. 
\bigskip

\noindent{\bf Open question.}
 Is there a common conjugacy-dynamical criterion for locally compact
groups that recovers the absence of ICC quotients in the countable
discrete setting, the distal/SIN criteria in the totally disconnected
setting, and the bounded-conjugacy-class condition of \Cref{main-thm}?

\bigskip

\noindent\textbf{Structure of the paper.}
\Cref{Prel-Lie groups} collects standard preliminaries on Lie groups and
harmonic functions. \Cref{if-mainthm} proves the ``if'' direction of
\Cref{main-thm}. \Cref{SEP-section} prepares the ``only if'' direction by
introducing and establishing a simultaneous escape property for the
conjugation action of $G$ on suitable products; the nilpotent, solvable,
and semisimple cases are treated separately. This escape property includes signs to accommodate
symmetric random walks. Finally, \Cref{Sec-onlyif}
uses this property to construct a symmetric non-Liouville adapted measure
whenever a commutator has unbounded conjugacy class.

To make the nilpotent case especially accessible, we have arranged the exposition so that it can be read through a  streamlined route, avoiding the heavier arguments necessary for the general Lie group case. More precisely, readers interested only in \Cref{CD-Nilpotent} may start reading \Cref{Lie groups} from \Cref{fact-nilp-spc}, stop reading \Cref{if-mainthm} at \Cref{CD-nilp-setting}, and in \Cref{SEP-section} need only read \S\ref{nilpSEP}.

\bigskip

\noindent\textbf{Notation.}
All Lie groups in this paper are real. For $g\in G$, we write $\Ad(g)$
both for the conjugation map $x\mapsto gxg^{-1}$ and, when no confusion
can arise, for its differential on the Lie algebra $\kg$. Brackets
$[\cdot,\cdot]$ denote group commutators, while
$[\cdot,\cdot]_{\kg}$ denotes the Lie bracket on $\kg$.

%%%% Write Ad(g)x or Ad(g)(x)?
\bigskip
\noindent\textbf{AI disclosure.} 
The main ideas and constructions of this paper, including the treatment of nonsymmetric measures, were developed without the use of any AI tool, in work begun in early 2025. 
At a later stage, while adapting the construction to produce \emph{symmetric} non-Liouville measures (i.e. establishing the ``moreover'' part of \Cref{main-thm}), we  used ChatGPT as a discussion tool. All mathematical arguments and the final text were checked and written by the authors.

\bigskip
\noindent\textbf{Acknowledgements.} 
We thank Vadim Kaimanovich for his careful reading, valuable
comments, and guidance concerning the historical development
of the subject.

\section{Preliminaries} \label{Prel-Lie groups}

We record standard properties regarding the structure of Lie groups (\S\ref{Lie groups}) and the regularity of harmonic functions (\S\ref{Harmonic functions}).

\subsection{Lie groups} \label{Lie groups}

Let $G$ be a connected  Lie group. 

\begin{fact}[Real-analytic structure] \label{analyt-struct}
 $G$ admits a structure of real-analytic manifold for which the product map and the inversion map are real-analytic. Moreover, this structure is unique, and  exponential local charts are real-analytic. 
\end{fact}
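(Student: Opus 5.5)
The plan is to derive \Cref{analyt-struct} from two classical ingredients: the Baker--Campbell--Hausdorff (BCH) series near the identity, and translation by group elements to move between charts. Uniqueness will follow from the fact that continuous homomorphisms between Lie groups are automatically smooth, combined with the real-analyticity of the exponential map.

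\emph{Existence.} Let $\kg$ be the Lie algebra of $G$. Choose a small ball $U\subset\kg$ on which $\exp$ is a diffeomorphism onto an open neighbourhood $V$ of the identity. Shrinking $U$ if needed, the BCH series
\[
X*Y=X+Y+\tfrac12[X,Y]_{\kg}+\dots
\]
converges absolutely for $X,Y\in U$ and satisfies $\exp(X)\exp(Y)=\exp(X*Y)$. The series is given by a convergent power series in the coordinates, so the map $(X,Y)\mapsto X*Y$ is real-analytic. Inversion in these coordinates is $X\mapsto -X$, which is linear. Now take the atlas $\{(gV,\ \log\circ L_{g^{-1}})\}_{g\in G}$. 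A transition map has the form $X\mapsto \log(h\exp X)$ with $h=g_1^{-1}g_2$. This is real-analytic near any point: write $h\exp X=\exp(Z)\exp(X')$ locally, with $Z,X'$ small, and apply BCH to the product. Hence the atlas is real-analytic.

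For the product map, work locally near $(g_1,g_2)$. In charts, $(g_1\exp X)(g_2\exp Y)=g_1g_2\,\exp(\Ad(g_2^{-1})X)\exp(Y)$, where $\Ad(g_2^{-1})$ is linear on $\kg$. Analyticity then again reduces to BCH. Inversion is handled the same way, using $(g\exp X)^{-1}=g^{-1}\exp(-\Ad(g)X)$. In this atlas the exponential charts $X\mapsto g\exp X$ are analytic by construction.

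\emph{Uniqueness.} Suppose $\mathcal A$ is another real-analytic structure compatible with the group operations. In particular it is smooth, so $(G,\mathcal A)$ is a Lie group. The identity map $(G,\mathcal A)\to G$ is a continuous homomorphism, hence smooth. Its differential is an isomorphism of Lie algebras, so the two structures share the same exponential map. Under $\mathcal A$, each one-parameter subgroup is an analytic map $\R\to G$, being a solution of an ODE with analytic coefficients. It follows that $\exp$ is analytic for $\mathcal A$, with invertible differential at $0$. The analytic inverse function theorem then shows that exponential charts are $\mathcal A$-analytic. Translating, our atlas and $\mathcal A$ are analytically compatible.

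\emph{Main obstacle.} The only real analytic content is the convergence of the BCH series on a neighbourhood of $0$. To handle it, I would first try the standard Dynkin-formula bound, which controls the terms by $\sum_n (C\|X\|+C\|Y\|)^n$ for $\|X\|,\|Y\|$ small. Alternatively, one can prove analyticity of $\exp$ directly by solving $\frac{d}{dt}\exp(tX)$ via analytic ODE theory and then invoking Cauchy--Kovalevskaya-type analyticity of flows. Everything else in the argument is bookkeeping with translations.
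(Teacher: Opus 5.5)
The overall architecture is sound and standard. Existence comes from exponential charts translated around $G$, and uniqueness from automatic smoothness of continuous homomorphisms plus analyticity of $\exp$. The paper gives no argument of its own here; it only cites \cite[Proposition~1.117]{Knapp2002}. Your write-up is therefore an unpacking of the cited fact. The transition-map computation, the product computation via $\Ad(g_2^{-1})$, the inversion computation and the uniqueness paragraph are all fine, up to routine shrinking of neighbourhoods.

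The weak point is your diagnosis of the crux. Existence really needs the identity $\exp(X)\exp(Y)=\exp(X*Y)$ in $G$, an arbitrary (possibly non-linear) Lie group carrying only a smooth structure. You assert this identity but never justify it, and your ``main obstacle'' paragraph does not supply a justification.

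Dynkin-type bounds only show that the formal series $X*Y$ converges in $\kg$. They say nothing about how that series relates to the group law. A priori $(X,Y)\mapsto\log(\exp X\exp Y)$ is merely smooth. Even knowing that its Taylor series is the BCH series would not suffice, since a smooth function need not equal its convergent Taylor series. Your fallback, analytic ODE theory for $t\mapsto\exp(tX)$ on $G$, is circular at the existence stage: $G$ has no analytic structure yet in which that ODE could be called analytic.

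The standard repair is to work in the vector space $\kg$. The smooth-category formula $d\exp_Z=dL_{\exp Z}\circ\frac{1-e^{-\ad Z}}{\ad Z}$ shows that $Z(t)=\log(\exp X\exp tY)$ solves
\[
Z'(t)=\frac{\ad Z}{1-e^{-\ad Z}}(Y),\qquad Z(0)=X,
\]
for small $X,Y$. This is an ODE on $\kg$ with analytic right-hand side. Analytic dependence on initial data and parameters then gives real-analyticity of $(X,Y)\mapsto\log(\exp X\exp Y)$ near $0$ directly, so the BCH series and its convergence are not even needed. ODE uniqueness also identifies this map with the BCH series if you want that identity.

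Alternatively, cite the BCH theorem in its group form. Or use Ado's theorem and the Lie correspondence to reduce to a linear group, where the identity is a matrix power-series computation. Once this ingredient is in place, the rest of your argument goes through as written.
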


\begin{proof}
See \cite[Proposition~1.117]{Knapp2002}.
\end{proof}

We will always consider $G$ endowed with this structure of real-analytic manifold. In view of Cartan's theorem, every closed subgroup of $G$ is an embedded real-analytic Lie subgroup (and conversely). 

\bigskip

 Let $\kg$ be the Lie algebra of $G$. Recall $\kg$ is  \emph{simple} if it is non-abelian and its only ideals are $\{0\}$ and $\kg$.  More generally, $\kg$ is \emph{semisimple}  if it is the direct sum of simple Lie algebras, or equivalently if $\{0\}$ is the only abelian ideal of $\kg$. We say that $G$ is simple or semisimple if this is the case of its Lie algebra.
  
 \begin{fact}[Weyl's theorem] \label{Weyl} If $G$ is semisimple and compact, then the fundamental group of $G$ is finite.
\end{fact}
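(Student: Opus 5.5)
Weyl's theorem is classical, and the plan is the standard Riemannian-geometric argument via Bochner/Myers, with a purely Lie-theoretic alternative as backup.

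\textbf{Choice of metric.} Since $G$ is compact, average an inner product on $\kg$ over $\Ad(G)$ to obtain an $\Ad(G)$-invariant inner product. Because $\kg$ is semisimple and compact, one may simply take $-B$, where $B$ is the Killing form. Indeed, $B$ is negative definite: $\ad(X)$ is skew-symmetric for an invariant inner product, so $B(X,X)=\operatorname{tr}(\ad X)^2\le 0$, with equality only if $\ad X=0$, i.e. $X$ lies in the center. The center is trivial by semisimplicity. Extend $-B$ to a bi-invariant Riemannian metric on $G$.

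\textbf{Curvature and Myers.} For a bi-invariant metric, the sectional curvature is
\[
K(X,Y)=\tfrac14\|[X,Y]_{\kg}\|^2
\]
for orthonormal $X,Y$, so the Ricci curvature is
\[
\mathrm{Ric}(X,X)=-\tfrac14 B(X,X)=\tfrac14\|X\|^2>0.
\]
The universal cover $\tG$ carries the lifted metric. It is complete, with Ricci curvature bounded below by the positive constant $\tfrac14$. By the Bonnet--Myers theorem, $\tG$ is therefore compact. The covering $\tG\to G$ has fibres in bijection with $\pi_1(G)$, which is a discrete closed subset of the compact space $\tG$. Hence $\pi_1(G)$ is finite.

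\textbf{Alternative route.} If one prefers to avoid Riemannian geometry, one can argue that $\tG$ is a simply connected Lie group with semisimple Lie algebra $\kg$, and that its center $Z(\tG)$ is discrete, with $G=\tG/\Gamma$ for some $\Gamma\subset Z(\tG)$. One would then need to show $Z(\tG)$ is finite, for instance via the identification $Z(\tG)\cong$ (coweight lattice)/(coroot lattice). This is the step I expect to be the main obstacle on that route, since it requires the structure theory of maximal tori. For this reason the Myers argument is preferred, and its only delicate point is the curvature formula for bi-invariant metrics.

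Since the statement is standard, in the paper I would simply cite it, for example \cite{Knapp2002}.
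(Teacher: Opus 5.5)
Your proof is correct. It goes further than the paper, which gives no argument for this fact and only points to Theorem~4.26 of \cite{Knapp1986}. Your closing remark, that one would simply cite a textbook, is exactly what the authors do.

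Your Bonnet--Myers route is a complete proof, modulo standard Riemannian geometry:
averaging makes $\ad X$ skew, so $B(X,X)=-\|\ad X\|_{\mathrm{HS}}^2$ vanishes only on the center of $\kg$, which is $\{0\}$;
for the bi-invariant metric $-B$ one gets $\mathrm{Ric}(X,X)=-\tfrac14 B(X,X)=\tfrac14\|X\|^2$;
the lifted metric on $\tG$ is left-invariant, hence complete;
Myers then makes $\tG$ compact, so the discrete fibre $\pi_1(G)$ is finite.

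A genuine advantage of your version is that it directly proves the form in which the paper actually uses the fact. That form is that \emph{every} connected covering group of a compact semisimple Lie group is compact, since Myers applies verbatim to any connected Riemannian cover. This is how Weyl's theorem is invoked in \Cref{Levi-G/R-compact}, \Cref{G/R-compact} and \Cref{semisimple-case}. The citation, on the other hand, keeps the paper short and avoids importing curvature computations that play no other role there.

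Setting aside the lattice-theoretic backup is sensible. Identifying the center of $\tG$ with the coweight lattice modulo the coroot lattice requires substantially more structure theory than the statement itself.
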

 \begin{proof} See \cite[Theorem~4.26]{Knapp1986}.
\end{proof}

% Denote by $(^{[k]}{G})_{k\geq1}$ and $(^{[k]}{\kg})_{k\geq1}$ the derived series of $G$ and $\kg$ respectively. Namely, $(^{[k]}{G})_{k\geq1}$ is  defined by induction via the rules $^{[1]}{G}=G$ and $^{[k+1]}{G}=[^{[k]}{G}, {^{[k]}{G}}]$, and $(^{[k]}{\kg})$ is defined similarly using the Lie bracket $[.,.]_{\kg}$ instead of the group commutator. Note $^{[k]}{\kg}$ is the Lie algebra of $^{[k]}{G}$ (which is an immersed subgroup). 
%We say that $G$ (equivalently $\kg$) is solvable if the derived series $(^{[k]}{G})_{k\geq1}$  (equivalently  $(^{[k]}{\kg})_{k\geq1}$) reaches $\{\id\}$ (resp. $\{0\}$)for some finite $k$.

 Denote by $(^{[k]}{\kg})_{k\geq1}$  the derived series of $\kg$. It is the decreasing sequence of ideals defined by the rules $^{[1]}{\kg}=\kg$ and $^{[k+1]}{\kg}=[^{[k]}{\kg}, {^{[k]}{\kg}}]_{\kg}$ where $[.,.]_{\kg}$ denotes the Lie bracket.
The Lie algebra $\kg$ is  \emph{solvable} if $^{[k]}{\kg}=\{0\}$ for some finite $k\geq 1$.  We say that $G$ is solvable if it is the case of $\kg$.  Note the derived series  $(^{[k]}{G})_{k\geq1}$ of $G$,  defined by $^{[1]}{G}=G$ and $^{[k+1]}{G}=[^{[k]}{G}, {^{[k]}{G}}]$, satisfies that each subgroup $^{[k]}{G}$ is immersed in $G$, with Lie algebra $^{[k]}{\kg}$. Hence the property of being solvable can also be read in the derived series of $G$.
%$\{\id\}$ is the only connected normal abelian subgroup of $G$. 

\begin{fact}[Radical] \label{def-rad}
$G$ admits a largest solvable  normal connected subgroup $R$. It is closed and the quotient Lie group $G/R$ is semisimple. 
\end{fact}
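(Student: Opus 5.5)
The plan is to build $R$ at the level of Lie algebras and then pass back to the group. First I will show that $\kg$ has a largest solvable ideal $\kr$. If $\ka,\mathfrak{b}$ are solvable ideals, then $\ka+\mathfrak{b}$ is an ideal. It is solvable because $(\ka+\mathfrak{b})/\ka\simeq \mathfrak{b}/(\ka\cap\mathfrak{b})$ is solvable, and an extension of a solvable Lie algebra by a solvable ideal is solvable, which follows directly from the derived series. Since $\kg$ is finite dimensional, a solvable ideal of maximal dimension contains every other solvable ideal, and I take this to be $\kr$.

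Next I let $R$ be the connected immersed subgroup of $G$ with Lie algebra $\kr$. Because $\kr$ is $\Ad(G)$-invariant ($G$ is connected, so $\Ad(G)$ is generated by $\exp(\ad X)$, and these preserve ideals), $R$ is normal, and it is solvable since its Lie algebra is. To see that $R$ is closed, I note that its closure $\overline{R}$ is a connected normal subgroup, hence a Lie subgroup. Its Lie algebra is an ideal that is still solvable, because the derived series of a closed subgroup satisfies $\overline{[H,H]}\supseteq$ the closure of the commutator, and closures of solvable groups are solvable by continuity of the iterated commutator identities. Maximality of $\kr$ then forces $\overline{R}=R$. Maximality among connected normal solvable subgroups is immediate: any such subgroup has a solvable ideal as Lie algebra, which lies in $\kr$, so the subgroup lies in $R$ by connectedness.

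Finally, $G/R$ is a Lie group with Lie algebra $\kg/\kr$. To show $\kg/\kr$ is semisimple, I use the characterization recalled above and suppose $\ka/\kr$ is an abelian ideal of $\kg/\kr$. Then $\ka$ is an ideal of $\kg$, and it is solvable as an extension of an abelian algebra by $\kr$. Hence $\ka=\kr$, so the only abelian ideal of $\kg/\kr$ is zero.

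The only delicate step is the closedness of $R$, specifically the claim that the closure of a solvable subgroup is solvable. I would handle this by the observation that if $H^{(k)}=\{e\}$ for the derived series, then the iterated commutator word map, which is continuous and vanishes on $H^{2^k}$, also vanishes on $\overline{H}^{2^k}$. Alternatively, one can simply cite the standard reference (e.g.\ Knapp) for this fact.
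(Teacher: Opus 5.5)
Your proof is correct. The paper gives no argument of its own here and simply cites \cite[\S 1.4]{GOV94}. Your construction is the standard argument behind that reference: take the largest solvable ideal $\kr$ and its analytic subgroup; get closedness because $\overline{R}$ is a connected normal subgroup whose Lie algebra is still a solvable ideal; and get semisimplicity of $\kg/\kr$ by pulling back abelian ideals.

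The one step you state loosely is the solvability of $\overline{R}$, and it is cleanest as follows. By continuity of the commutator map, $[\overline{H},\overline{H}]\subseteq\overline{[H,H]}$. Induction then gives $\overline{H}^{(k)}\subseteq\overline{H^{(k)}}$, which is trivial as soon as $H^{(k)}$ is.
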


$R$ is called the \emph{radical} of $G$.
\begin{proof}
See \cite[\S 1.4]{GOV94}.
\end{proof}
%Let us check that $N$ is well defined. Passing to the closure, one may reduce to considering closed normal nilpotent connected subgroups. By considering dimension, it is enough to show that this class is stable by multiplication, i.e. by $(R,S)\mapsto RS$. To check this stability, one notes  that for any normal subgroups $R,S,T$, one has the relation $[RS, T]\subseteq [R,T][S,T]$. This implies $[RS, RS]\subseteq [R,R][R,S] [S,R] $, and more generally that $(RS)^{[k]}$ is included in the product of $k$-iterated commutators involving either $R$ or $S$. Using that  $[R, S^{[i]}]\subseteq S^{[i]}$, we deduce $(RS)^{[k]}$ is trivial for $k$ bigger that $\text{step}(R)+ \text{step}(S)$.

The next result ensures that $\kg$ is the semidirect product of its largest solvable ideal and a semisimple complementary Lie subalgebra.

\begin{fact}[Levi decomposition] \label{Levi-dec} Let $\kr\subseteq \kg$ be the Lie algebra of $R$. There exists  a semisimple Lie subalgebra  $\kk\subseteq \kg$ which is in direct sum with $\kr$ (for the vector space structure of $\kg$). Moreover, any two such complementary semisimple Lie subalgebras
are conjugate under $\Ad(G)$.
\end{fact}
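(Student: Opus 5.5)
This is the classical Levi--Malcev theorem, and I would prove it purely at the Lie algebra level. There are two parts: existence of a semisimple complement $\kk$ to $\kr$, and conjugacy of any two such complements under $\Ad(G)$, in fact already under $\exp(\ad(\kn))$, where $\kn$ is the nilradical of $\kg$.

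\textbf{Existence.} I would induct on $\dim\kr$.
\begin{itemize}
\item If $\kr=0$ there is nothing to prove.
\item If $\kr$ contains a nonzero ideal $\ka$ of $\kg$ with $\ka\subsetneq\kr$, I would pass to $\kg/\ka$, whose radical is $\kr/\ka$. Induction gives a semisimple $\kk'/\ka$ complementary to $\kr/\ka$. The algebra $\kk'$ has radical $\ka$ of smaller dimension, so a second application of induction yields a semisimple complement inside $\kk'$, and this is the desired $\kk$.
\item Otherwise $\kr$ is a minimal ideal. Since $[\kr,\kr]$ is a proper ideal of $\kg$ (as $\kr$ is solvable), minimality forces $[\kr,\kr]=0$, so $\kr$ is abelian and an irreducible $\kg/\kr$-module.
\item If this module is trivial, then $\kr$ is central and $\kg$ is a central extension of the semisimple algebra $\ks=\kg/\kr$. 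Such an extension splits because $H^2(\ks,\R)=0$ by Whitehead's second lemma.
\item If the module is nontrivial, I would choose a linear section $s:\ks\to\kg$ and look at its failure to be a homomorphism, $\omega(x,y)=[s x,s y]-s[x,y]\in\kr$. This is a $2$-cocycle with values in the module $\kr$. Whitehead's second lemma again gives $\omega=d\beta$, and correcting $s$ by $\beta$ produces a Lie algebra section whose image is $\kk$.
\end{itemize}
The main input is therefore the vanishing of $H^2(\ks,V)$ for every finite-dimensional module $V$ over a semisimple $\ks$. I would prove it with the Casimir element: it acts invertibly on a nontrivial irreducible module, and one reduces the remaining cases to this one. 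I expect this cohomological step to be the main obstacle.

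\textbf{Conjugacy.} Let $\kk_1,\kk_2$ be two complements. Again I would induct, passing to the quotient by a minimal ideal $\ka\subseteq\kn$. Note that $\kn\neq0$ whenever $\kr\neq0$, because the last nonzero term of the derived series of $\kr$ is an abelian ideal of $\kg$.
\begin{itemize}
\item By induction, after conjugating by some element $\exp(\ad x)$ with $x\in\kn$, we may assume $\kk_1+\ka=\kk_2+\ka$.
\item Both $\kk_i$ are then complements to $\ka$ inside the algebra $\kk_1\oplus\ka$, where $\ka$ is abelian. Writing $\kk_2$ as the graph of a linear map $\phi:\kk_1\to\ka$, the condition that $\kk_2$ is a subalgebra says exactly that $\phi$ is a $1$-cocycle.
\item Whitehead's first lemma, $H^1(\ks,\ka)=0$, gives $\phi=d a$ for some $a\in\ka$. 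Then $\exp(\ad a)$ carries $\kk_1$ onto $\kk_2$.
\end{itemize}
Finally, each $\exp(\ad x)$ with $x\in\kn$ equals $\Ad(\exp x)$ with $\exp x\in G$. So $\kk_1$ and $\kk_2$ are conjugate under $\Ad(G)$. In the paper itself I would simply cite a standard reference, such as \cite{GOV94}.
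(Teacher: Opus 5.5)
Your proposal is correct. It is the classical Whitehead-lemma proof, with existence via $H^2(\ks,V)=0$ and Malcev conjugacy via $H^1(\ks,V)=0$, as found in the references the paper cites; the paper itself gives no argument beyond those citations.

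The only slip is bookkeeping in the conjugacy induction. An element of the nilradical of $\kg/\ka$ need not lift into $\kn$: for $\kg=\Span(X,Y)$ with $[X,Y]_{\kg}=Y$ and $\ka=\R Y$, one has $\kn=\R Y$, yet the quotient is its own nilradical. This is harmless, because $\exp(\ad x)\in\Ad(G)$ for every $x\in\kg$ and it induces $\exp(\ad \bar x)$ on $\kg/\ka$. Alternatively, you can run the induction with $[\kg,\kr]\subseteq\kn$, which maps onto $[\kg/\ka,\kr/\ka]$.
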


\begin{proof}
Existence follows from the Levi decomposition
\cite[Appendix~B, Theorem~B.2]{Knapp2002}.
The conjugacy assertion is the Levi--Malcev theorem; see
\cite[Theorem~1.4.3]{GOV94}.
\end{proof}

The immersed connected subgroup of $G$ corresponding to $\kk$ might not be closed. It is at least closed when $G/R$ is compact.  

\begin{corollary} \label{Levi-G/R-compact}
If $G/R$ is compact, then $G= K R$ where $K$ is a semisimple compact connected Lie subgroup with $K\cap R$ finite. Moreover, $K$ is unique up to conjugation by $G$.
\end{corollary}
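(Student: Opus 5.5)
Take the Levi subalgebra $\kk$ from \Cref{Levi-dec}, let $K$ be the connected immersed subgroup of $G$ with Lie algebra $\kk$, and prove four things in turn: $K$ is compact, $G=KR$, $K\cap R$ is finite, and $K$ is unique up to conjugation.

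\textbf{Step 1: $K$ is compact.} The quotient $\pi:G\to G/R$ restricts on $\kk$ to an isomorphism of Lie algebras $d\pi:\kk\to \kg/\kr$, because $\kk\oplus\kr=\kg$. Hence $\pi|_K:K\to G/R$ is a homomorphism of connected Lie groups whose differential is an isomorphism, so it is a covering map onto $G/R$. The group $G/R$ is compact, and it is semisimple by \Cref{def-rad}, so $\kk$ is a compact semisimple Lie algebra. Write $\widetilde K$ for the simply connected group with Lie algebra $\kk$. By \Cref{Weyl}, $\widetilde K$ is a finite cover of the compact semisimple group $G/R$, so $\widetilde K$ is compact. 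The inclusion $K\hookrightarrow G$ lifts to a homomorphism $\widetilde K\to G$ whose image is $K$. A continuous image of a compact group is compact, so $K$ is compact, hence closed, and is an embedded compact connected semisimple Lie subgroup.

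\textbf{Step 2: $G=KR$ and $K\cap R$ is finite.} The map $\pi|_K$ is a covering, so it is surjective, which gives $G=KR$. Its kernel is $K\cap R$, a discrete subgroup of the compact group $K$, hence finite.

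\textbf{Step 3: uniqueness up to conjugation.} Let $K'$ be another compact connected semisimple subgroup with $G=K'R$ and $K'\cap R$ finite, and let $\kk'$ be its Lie algebra. Finiteness of $K'\cap R$ gives $\kk'\cap\kr=0$. From $G=K'R$ we get $\dim\kk'\geq\dim\kg-\dim\kr$: the map $K'\times R\to G$ is a smooth surjection, so Sard's theorem forces this dimension count. Hence $\kk'$ is a complement of $\kr$. By \Cref{Levi-dec} there is $g\in G$ with $\Ad(g)\kk=\kk'$. Since $K$ and $K'$ are connected, each is generated by the exponential of its Lie algebra, so $gKg^{-1}=K'$.

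I expect Step 1 to be the main point to get right. It is where Weyl's theorem enters, and it is what makes the a priori only immersed Levi subgroup closed. The remaining steps are routine.
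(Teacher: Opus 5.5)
Your proof is correct and follows the paper's route. You integrate the Levi subalgebra to $K$, use Weyl's theorem to force the covering $K\to G/R$ to make $K$ compact, hence embedded with $G=KR$ and $K\cap R$ finite, and obtain uniqueness from Levi--Malcev conjugacy plus connectedness. The differences are minor: you pass through the universal cover $\widetilde K$ rather than noting that the covering $K\to G/R$ has finite fibers, and you justify $\kk'+\kr=\kg$ explicitly via Sard, a point the paper leaves implicit when it says $\kk'$ is in direct sum with $\kr$.
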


\begin{proof}
Let $\kk$ be as in  \Cref{Levi-dec}, let $\iota : K\hookrightarrow G$ be the \emph{immersed} connected Lie group corresponding to $\kk$. Set $\pi :G\rightarrow G/R$ the projection map. Then $\pi \circ \iota : K\rightarrow G/R$ is a surjective Lie group homomorphism with discrete kernel, therefore it is a covering. 
As $G/R$ is compact and semisimple, it has finite fundamental group (\Cref{Weyl}), which implies the fibers  of $\pi \circ \iota$ are finite. It follows that $K$ is compact, and $\iota$ is an embedding. From now on, we identify $K$ and $\iota(K)$, so we see $K\subseteq G$, and it validates the first claim.
We now check the uniqueness statement. If $K'$ is another valid option, then its Lie algebra $\kk'$ is also semisimple, and in direct sum with $\kr$ because $K'\cap R$ is finite. Applying \Cref{Levi-dec}, we may consider $g\in G$ such that $\Ad(g)\kk'=\kk$, and by connectedness it follows that $gK'g^{-1}=K$. 
\end{proof}

The next fact will be useful to ensure that a connected Lie group acts by isometries on each of its compact normal subgroups.

\begin{fact}\label{autom-K}
The group of  automorphisms of a compact connected Lie group $K$ is a Lie group, whose connected component coincides with the subgroup of inner automorphisms.  In particular, it is discrete if $K$ is abelian. 
\end{fact}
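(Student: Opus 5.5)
The plan is to use the structure theory of compact connected Lie groups: $K = Z(K)^0 \cdot [K,K]$, where $T := Z(K)^0$ is a torus, $S := [K,K]$ is a compact connected semisimple subgroup, and $T\cap S$ is finite.

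\emph{Step 1: $\mathrm{Aut}(K)$ is a Lie group.} Every automorphism is determined by its differential, so $\mathrm{Aut}(K)$ embeds as a closed subgroup of $\mathrm{GL}(\kk)$, where $\kk=\Lie(K)$. Indeed, $\mathrm{Aut}(K)$ is identified with the automorphisms $\phi$ of $\kk$ preserving the kernel $\Gamma$ of $\exp$ restricted to the universal cover $\R^a\times \tilde S$. Here $\tilde S$ is compact by \Cref{Weyl}, and $\Gamma$ is a discrete central subgroup. Closedness holds because a limit of automorphisms preserving the discrete set $\Gamma$ still preserves it. By Cartan's theorem, $\mathrm{Aut}(K)$ is then a Lie group.

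\emph{Step 2: the abelian case.} When $K=T=\R^a/\Z^a$, $\mathrm{Aut}(K)\cong \mathrm{GL}_a(\Z)$, which is discrete. This gives the last claim.

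\emph{Step 3: the identity component is $\Ad(K)$.} Since $K$ is connected, the inner automorphisms $\Ad(K)$ form a connected subgroup, so $\Ad(K)\subseteq \mathrm{Aut}(K)^0$. For the reverse inclusion, let $\phi_t$ be a continuous path in $\mathrm{Aut}(K)$ starting at $\id$.

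\emph{Step 3a: the torus is fixed pointwise.} Each $\phi_t$ preserves the characteristic subgroups $T$ and $S$. The restriction $\phi_t|_T$ is a continuous path in the discrete group $\mathrm{Aut}(T)$, hence equals $\id$.

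\emph{Step 3b: the semisimple factor acts by inner automorphisms.} The restriction $\phi_t|_S$ lies in $\mathrm{Aut}(S)^0$, whose Lie algebra is $\mathrm{Der}(\mathfrak s)$. For semisimple $\mathfrak s$, every derivation is inner. Hence $\mathrm{Aut}(S)^0$ and $\Ad(S)$ are connected subgroups of $\mathrm{GL}(\mathfrak s)$ with the same Lie algebra, so they coincide. This gives $\phi_t|_S=\Ad(s_t)$ for some $s_t\in S$.

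\emph{Step 3c: conclusion.} Since $\Ad(s_t)$ is trivial on the central torus $T$, and $K=TS$, we get $\phi_t=\Ad(s_t)$ on all of $K$. Therefore $\mathrm{Aut}(K)^0=\Ad(K)$.

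The main obstacle is the identification $\mathrm{Aut}(S)^0=\Ad(S)$, which rests on the fact that all derivations of a semisimple Lie algebra are inner. I would quote this from Knapp, proved via nondegeneracy of the Killing form. A secondary point needing care is verifying that $\mathrm{Aut}(K)$ is closed in $\mathrm{GL}(\kk)$.
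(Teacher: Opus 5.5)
Your argument is correct, but it takes a different route from the paper, which gives no proof of this fact and simply cites Hochschild (1952) and Iwasawa (1949, Theorem~1). Those references treat automorphism groups of compact (and more general) Lie groups intrinsically, with the compact-open topology.

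You instead give a self-contained derivation. You embed $\mathrm{Aut}(K)$ in $\mathrm{GL}(\kk)$ via $\phi\mapsto d\phi$, and get closedness from the discreteness of $\Gamma=\Ker(\tilde K\to K)$ together with Cartan's theorem. You then split $K=Z(K)^0[K,K]$, using rigidity of $\mathrm{Aut}(T)\cong\mathrm{GL}_a(\Z)$ on the central torus and $\mathrm{Der}(\mathfrak s)=\ad(\mathfrak s)$ on the semisimple part.

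Your route is transparent, and it yields exactly the form the paper needs. In every application (e.g. \Cref{centralizer-bnd}, \Cref{convid}, \Cref{solv->nil}), the relevant map is the conjugation action $G\to\mathrm{Aut}(K)$ of a connected group on a normal compact subgroup. Since $g\mapsto\Ad(g)|_{\kk}$ is continuous into $\mathrm{GL}(\kk)$, your topology is the one that matters. The citations additionally show that this topology agrees with the compact-open one; you do not check this, but the paper does not need it.

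A few minor points:
\begin{enumerate}
\item In Step~1, the compactness of $\tilde S$ plays no role, since any automorphism of $\kk$ lifts to the simply connected cover.
\item The closedness argument should say that the lifts converge pointwise (evaluate them on products of exponentials). It should also note that both $\tilde\psi(\Gamma)\subseteq\Gamma$ and $\tilde\psi^{-1}(\Gamma)\subseteq\Gamma$ pass to the limit.
\item In Step~3b you can bypass $\mathrm{Aut}(S)$ altogether: $d(\phi_t|_S)$ is a path from the identity in $\mathrm{Aut}(\mathfrak s)$, whose identity component is $\Ad(S)$.
\end{enumerate}
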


\begin{proof}
See \cite{Hochschild1952} and \cite[Theorem~1, p.~509]{Iwasawa1949}.
\end{proof}

We  now turn to properties of nilpotent Lie groups. 
Write $(\kg^{[k]})_{k\geq 1}$  the lower central series of $\kg$. It is defined by induction via the rules $\kg^{[1]}=\kg$ and $\kg^{[k+1]}=[\kg, \kg^{[k]}]_{\kg}$. 
The Lie algebra $\kg$ is  \emph{nilpotent} if $\kg^{[k]}=\{0\}$ for some finite $k\geq 1$. We say that $G$ is nilpotent if it is the case of $\kg$.  Note the lower central series  $({G}^{[k]})_{k\geq1}$ of $G$,  defined by ${G}^{[1]}=G$ and ${G}^{[k+1]}=[{G}, G^{[k]}]$, satisfies that each subgroup $G^{[k]}$ is immersed in $G$, with Lie algebra ${\kg}^{[k]}$. Hence the property of being nilpotent can also be read in the lower central series of $G$.
We call \emph{step} of $\kg$ (or $G$)  the largest integer $s\geq1$ such that $\kg^{[s]} \neq \{0\}$ (equivalently $G^{[s]}\neq \{\id\}$).

\begin{fact} \label{fact-nilp-spc}
If $G$ is  nilpotent and simply connected, then the exponential map $\exp: \kg \rightarrow G$ is a diffeomorphism. The group structure $*$ induced by $G$ on $\kg$  is given by the Baker-Campbell-Hausdorff formula
\begin{equation*}
x*y=x+y+\frac{1}{2}[x,y]_{\kg}+ \,\cdots \tag{$x,y\in \kg$}
\end{equation*}
In particular, $(\kg, *)$ is an algebraic group (i.e. the product map and the inversion map are polynomial), and the exponential map induces a bijection between Lie subalgebras of $\kg$ and connected Lie subgroups of $G$.
\end{fact}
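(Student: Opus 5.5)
The plan is to prove the four assertions of \Cref{fact-nilp-spc} in order: diffeomorphism of $\exp$, the BCH description of the product, the polynomiality of $(\kg,*)$, and the subalgebra/subgroup correspondence. Throughout, $\kg$ is nilpotent of step $s$, so every bracket of length greater than $s$ vanishes.

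\textbf{The BCH product on $\kg$.} Define $x*y$ on $\kg$ by the Baker--Campbell--Hausdorff series, in Dynkin's form. Each homogeneous term of degree $m$ is a linear combination of iterated brackets of length $m$ in $x$ and $y$. Since $\kg^{[s+1]}=\{0\}$, all terms of degree $>s$ vanish. Hence the series is a finite sum, i.e. a polynomial map $\kg\times\kg\to\kg$.

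\textbf{$(\kg,*)$ is a simply connected Lie group with Lie algebra $\kg$.} Associativity of $*$ is a formal identity in the free Lie algebra, holding wherever the series converge. Here both sides of $(x*y)*z=x*(y*z)$ are polynomials that agree near $0$, so they agree everywhere. The unit is $0$ and the inverse is $x\mapsto -x$, since $x*(-x)=0$: all brackets of $x$ with $-x$ vanish. So $(\kg,*)$ is a Lie group, in fact algebraic, diffeomorphic to a vector space and hence simply connected. Its Lie algebra is $\kg$, because the degree-two part of the product is $\tfrac12[x,y]_{\kg}$, which recovers the bracket. Lines $t\mapsto tx$ are one-parameter subgroups, since $tx*ux=(t+u)x$, so the exponential of $(\kg,*)$ is the identity map.

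\textbf{Transfer to $G$.} By Lie's third theorem (uniqueness of the simply connected group with a given Lie algebra), there is an isomorphism $\Phi:(\kg,*)\to G$ with differential $\id$. Naturality of the exponential gives $\exp_G=\Phi\circ\exp_{(\kg,*)}=\Phi$. Hence $\exp_G$ is a diffeomorphism, and the structure it transports to $\kg$ is exactly $*$. This proves the first two assertions together with the algebraicity claim.

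\textbf{Subalgebras versus connected subgroups.} Let $\kn$ be a subalgebra of $\kg$. It is closed under $*$, since all BCH terms are brackets, and under negation. So $\kn$ is a closed connected subgroup of $(\kg,*)$ with Lie algebra $\kn$, and $\exp(\kn)$ is the corresponding connected subgroup of $G$. Conversely, let $H$ be a connected Lie subgroup of $G$ with Lie algebra $\kn$. Then $H$ is generated by $\exp(\kn)$, and $\exp(\kn)$ is already a subgroup, so $H=\exp(\kn)$. Thus $\kn\mapsto\exp(\kn)$ is a bijection, whose inverse sends $H$ to its Lie algebra.

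\textbf{Main obstacle.} The only nonformal step is associativity of $*$. I would rely on the standard fact that BCH is associative as a formal series, and then use the polynomial identity argument above to pass from a neighborhood of $0$ to all of $\kg$. Alternatively, I could simply cite Corwin--Greenleaf or Knapp for this step.
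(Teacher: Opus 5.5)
Your proof is correct, but it takes a different route from the paper. The paper gives no argument here: it simply cites three separate results from Knapp (the diffeomorphism theorem for simply connected nilpotent groups, a corollary on subgroups, and the Baker--Campbell--Hausdorff formula from the appendix).

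You instead build the group $(\kg,*)$ directly from the truncated BCH polynomial. You check that it is a simply connected Lie group with Lie algebra $\kg$ and identity exponential, and then transport everything to $G$ through the isomorphism $\Phi$ with $d\Phi=\id$. This transfer step uses the lifting theorem for homomorphisms out of simply connected groups, not Lie's third theorem. Lie's third theorem is an existence statement, which you do not need since $G$ is given.

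Your route has a real advantage: all four assertions come out of a single construction. Polynomiality of the product, the formula $x^{-1}=-x$, and the closedness and simple connectedness of $\exp(\kn)$ for every subalgebra $\kn$ all become immediate. The cited route, by contrast, treats the diffeomorphism property and the BCH description as separate theorems.

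The price is that you must supply associativity of $*$, and this can be done without any convergence consideration. In the completed free associative algebra on $X,Y,Z$, both $(X*Y)*Z$ and $X*(Y*Z)$ are Lie series equal to $\log(e^Xe^Ye^Z)$. Project the completed free Lie algebra onto the free step-$s$ nilpotent Lie algebra; this is a Lie homomorphism commuting with the BCH expressions, so it yields an exact polynomial identity. That identity then specializes to any $x,y,z\in\kg$.

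Your alternative is also fine: get associativity near $0$ from the local BCH formula in $G$ and injectivity of $\exp$ near $0$, then extend it by polynomial continuation.
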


  \Cref{fact-nilp-spc} thus guarantees  that every simply connected nilpotent Lie group carries  an additional structure of real vector space by identification with its Lie algebra, and it makes it an algebraic group.
  
  \begin{proof}
\Cref{fact-nilp-spc} follows from \cite[Theorem~1.127, Corollary~1.134 and Theorem~B.22]{Knapp2002}.
\end{proof}

\begin{lemma} \label{max-comp-in-nilp} If $G$ is nilpotent, then $G$ admits a largest compact subgroup $T_{G}$. It is central, connected, and $G/T_{G}$ is simply connected.
\end{lemma}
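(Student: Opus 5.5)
The plan is to pass to the universal cover and read everything off from the structure of $\tG$ given by \Cref{fact-nilp-spc}. Let $p:\tG\to G$ be the universal covering, with $\tG$ simply connected nilpotent, and let $\Gamma=\ker p$. Then $\Gamma$ is a discrete normal subgroup of the connected group $\tG$, hence central. Write $Z=Z(\tG)$ for the center. By \Cref{fact-nilp-spc}, $Z$ is connected: it corresponds to the center $\mathfrak z$ of $\kg$, since an element $\exp x$ is central iff $\Ad(\exp x)=e^{\ad x}=\id$. Because $\ad x$ is nilpotent, $e^{\ad x}=\id$ forces $\ad x=0$. So $Z=\exp(\mathfrak z)\cong\R^m$ is a vector group, and $\Gamma$ is a discrete subgroup of it. Let $V=\Span_\R\Gamma\subseteq\mathfrak z$. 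Then $\Gamma$ is a lattice in $V$, so $T:=p(V)=V/\Gamma$ is a torus. It is central in $G$ because $V$ is central in $\tG$, and it is connected and compact.

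Next I show that $G/T$ is simply connected. We have $G/T\cong\tG/V$, and $V$ is a closed connected normal subgroup (a vector subspace, hence a subalgebra, in exponential coordinates). Using \Cref{fact-nilp-spc} once more, $\tG/V$ is diffeomorphic to $\kg/V$ via $\exp$, or more directly via the fibration $V\to\tG\to\tG/V$ with contractible fiber and total space. Either way it is simply connected.

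Now I show that $T$ is the largest compact subgroup. Let $C\subseteq G$ be compact. Its image in $G/T\cong\tG/V$ is a compact subgroup of a simply connected nilpotent Lie group. The key claim is that such a group has no nontrivial compact subgroups. To prove it, take $g=\exp x\neq\id$. Then $g^n=\exp(nx)$ leaves every compact set, since $\exp$ is a diffeomorphism (homeomorphism) $\kg\to\tG$ and $nx\to\infty$. So any compact subgroup contains no nontrivial element and is trivial. Hence $C\subseteq T$, so $T_G:=T$ is the largest compact subgroup, and it is central, connected, and has simply connected quotient.

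The main point requiring care is the connectedness of the center of $\tG$ (equivalently, that $\Gamma$ lies in a vector subgroup), which uses the nilpotency of $\ad$. The compactness of $V/\Gamma$ relies on the standard fact that a discrete subgroup of $\R^m$ is a lattice in its real span. The rest is routine.
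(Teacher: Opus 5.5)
Your proof is correct and follows the paper's route. You pass to the universal cover, enlarge the central kernel to a central vector subgroup in which it is a lattice, take its torus image, and rule out other compact subgroups by properness of $n\mapsto x^n$ in the simply connected quotient. The only difference is that the paper takes the Zariski closure of $\Ker \pi$, whereas you take its real span inside the connected vector center of $\tG$. These are the same subgroup, and your version justifies explicitly, via nilpotency of $\ad x$, why it is a vector subgroup containing $\Ker \pi$ as a lattice.
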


\begin{proof} 
Consider a universal cover $\pi:\tG\rightarrow G$, set $\tilde H$ the Zariski-closure of $\Ker \pi$, and  $H:=\pi (\tilde H)$.  Note $\Ker \pi$ is a normal discrete subgroup in $\tG$. It is therefore central, so  $\tilde H$ is as well  by Zariski-density of $\Ker \pi\subseteq \tilde H$.  This means the group structure on $\tilde H$ is that of the underlying vector space (see \Cref{fact-nilp-spc}), and $\Ker \pi$ is an additive lattice in $\tilde H$. It follows that $H$ is central, compact, connected, and the quotient $G/H$ is a simply connected nilpotent Lie group (because isomorphic to $\tG/\tilde H$). It remains to check that $H$ contains all compact  subgroups of $G$.  Let $L\subseteq G$ be one of them. The projection of $L$ to $G/H$ is a compact subgroup in a simply connected nilpotent Lie group. It must therefore be trivial (as for any non-trivial $x\in G/H$, the map $\N\rightarrow G/H, k\mapsto x^k$ is proper by \Cref{fact-nilp-spc}), i.e. $L\subseteq H$. 
 This validates the lemma with  $T_{G}:=H$.
\end{proof}

Similarly to \Cref{def-rad}, we have 
\begin{fact}[Nilpotent radical]
Every connected Lie group $G$ admits a largest nilpotent normal  connected subgroup $N$. 
\end{fact}
We call $N$ the \emph{nilpotent radical} of $G$.

\begin{proof} 
See \cite[Chapter~2, \S5.2]{GOV94}.
\end{proof}

%Let us check that $N$ is well defined. Passing to the closure, one may reduce to considering closed normal nilpotent connected subgroups. By considering dimension, it is enough to show that this class is stable by multiplication, i.e. by $(R,S)\mapsto RS$. To check this stability, one notes  that for any normal subgroups $R,S,T$, one has the relation $[RS, T]\subseteq [R,T][S,T]$. This implies $[RS, RS]\subseteq [R,R][R,S] [S,R] [S,R]$, and more generally that $(RS)^{[k]}$ is included in the product of $k$-iterated commutators involving either $R$ or $S$. Using that  $[R, S^{[i]}]\subseteq S^{[i]}$, we deduce $(RS)^{[k]}$ is trivial for $k$ bigger that $\text{step}(R)+ \text{step}(S)$.

\bigskip
We finish the section with a useful concept, that of \emph{largest vector space quotient $G/G'$ of a Lie group $G$}. It will enable us to use tools from linear algebra even though we deal with abstract Lie groups. 

\begin{definition}[Largest vector space quotient] \label{vs-radical}
Given a connected Lie group $G$,  we define $G'$  as the smallest  normal closed subgroup of $G$ such that $G/G'$ is isomorphic to a real vector space (endowed with the addition).

 In more concrete terms, $G'$ is obtained as follows: $G'=\pi^{-1}(T)$, where $\pi:G\rightarrow G/\overline{[G,G]}$ is the quotient map, $\overline{[G,G]}$ is the closure of the derived subgroup, and $T\subseteq G/\overline{[G,G]}$ is the maximal compact subgroup (torus component). Note that $G'$ is stable under automorphisms of $G$, so every automorphism of $G$ acts on $G/G'$ as a linear map. 
\end{definition}

%To be added:  $[K,K]$ is closed in $K$ if $K$ is a compact Lie group? And counterexample without compactness. 

\subsection{Harmonic functions} \label{Harmonic functions}

Let $G$ be a Lie group, let $\mu$ be a probability measure on $G$. Denote by $P_{\mu}$  the operator on bounded measurable functions $f:G\rightarrow \R$ defined by $P_{\mu}f(x)=\int_{G}f(xg)d\mu(g)$. Note that $P_{\mu}$ preserves the set of almost-everywhere zero functions, therefore induces a linear operator $P_{\mu}\acts L^\infty(G)$. As mentioned earlier, a function $f:G\rightarrow \R$ is $\mu$-\emph{harmonic} if its class $[f]\in L^\infty(G)$ is fixed under $P_{\mu}$, i.e. $P_{\mu}[f]=[f]$.

Recall a function $f:G\rightarrow \R$ is \emph{left uniformly continuous} if for every $\eps>0$, there exists $U\subseteq G$ a neighborhood of the identity such that 
$$ \sup_{x\in G, u\in U}|f(x)- f(ux)|<\eps.$$

We now see that studying bounded harmonic functions on $G$ essentially boils down to studying those that are left uniformly continuous. This is a standard regularization argument; see
\cite[Lemma~1.3 and Corollary~1.5]{Raugi1978}.

\begin{lemma}
 Let $f:G\rightarrow \R$ be a bounded $\mu$-harmonic  function. There exists a sequence of  \emph{left uniformly continuous} bounded $\mu$-harmonic  functions $(f_{n}:G\rightarrow \R)_{n\geq 1}$ such that $f_{n}\to_{n} f$ almost-everywhere.
\end{lemma}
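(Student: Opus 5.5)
The idea is to regularize $f$ by convolving on the \emph{left} with an approximate identity. Left convolution commutes with the right Markov operator $P_{\mu}$, so it preserves harmonicity, and it makes the function left uniformly continuous. Fix a left Haar measure $m$ on $G$ and a sequence of continuous, nonnegative, compactly supported functions $\rho_n$ with $\int \rho_n \dd m = 1$ and supports shrinking to the identity. Set
\[
f_n(x) = \int_G \rho_n(h)\, f(h^{-1}x)\dd m(h) = \int_G \rho_n(xy^{-1})\,f(y)\,\Delta(y)^{-1}\dd m(y),
\]
where the second expression comes from the change of variables $y = h^{-1}x$; the modular factor $\Delta$ only needs care and plays no further role. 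Since $f$ is only defined almost everywhere, the second expression shows that $f_n$ depends only on the class $[f]$ and is defined \emph{everywhere}. Moreover $\|f_n\|_\infty \le \|f\|_\infty$.

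The key steps, in order, are as follows.

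\textbf{Harmonicity.} By Fubini,
\[
P_{\mu} f_n(x) = \int_G \rho_n(h) \int_G f(h^{-1}xg)\dd\mu(g)\dd m(h).
\]
For fixed $x$, the map $h \mapsto h^{-1}x$ is a diffeomorphism, so it pushes $m$ to a measure equivalent to Haar measure. The a.e. identity $P_{\mu}f = f$ therefore holds at $h^{-1}x$ for $m$-a.e. $h$, and we get $P_{\mu} f_n = f_n$ everywhere.

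\textbf{Left uniform continuity.} For $u \in G$, substituting $h \mapsto u h$ gives
\[
f_n(ux) = \int \rho_n(u h) f(h^{-1}x)\dd m(h),
\]
using left invariance of $m$. Hence
\[
|f_n(ux) - f_n(x)| \le \|f\|_\infty \, \| \rho_n(u\,\cdot) - \rho_n \|_{L^1(m)}.
\]
The right-hand side is independent of $x$. It tends to $0$ as $u \to \id$ because $\rho_n$ is uniformly continuous with compact support, or equivalently because translation is continuous in $L^1$.

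\textbf{Convergence.} We show $f_n \to f$ almost everywhere. This is a Lebesgue differentiation statement for the approximate identity, in the chart given by \Cref{analyt-struct}. Take $\rho_n$ supported in exponential balls of radius $1/n$ with $\rho_n \le C\, m(B_{1/n})^{-1}$. In a local chart around any point, $m$ has a smooth positive density and the family $\{B_{1/n}^{-1}x\}$ is comparable to Euclidean balls around $x$, with uniform constants on compact sets. Then
\[
|f_n(x) - f(x)| \le C\, m(B_{1/n})^{-1}\int_{B_{1/n}^{-1}x} |f(y) - f(x)|\dd m(y),
\]
which tends to $0$ at every Lebesgue point of $f$. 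Since $f$ is locally integrable, almost every point is a Lebesgue point. Alternatively, and avoiding differentiation theory, one gets $f_n \to f$ in $L^1_{loc}$ by continuity of translations in $L^1$, and then passes to an a.e. convergent subsequence, which suffices for the statement.

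The main obstacle is this last step, specifically making sure the convergence is almost everywhere and not only in $L^1_{loc}$. I would take the subsequence route, since the lemma only asks for some sequence $(f_n)$. Then a diagonal argument over an exhaustion of $G$ by compact sets gives a single subsequence converging a.e. on all of $G$.
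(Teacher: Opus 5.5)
Your proposal is correct and follows the same route as the paper: left convolution with an approximate identity commutes with the right Markov operator $P_{\mu}$, makes the function left uniformly continuous, and converges to $f$ almost everywhere, at least along a subsequence, which is all the statement needs. The paper only sketches this argument (citing Raugi), and your write-up correctly supplies the details it omits: the change of variables, the Fubini step, the $L^1$ estimate for left uniform continuity, and the subsequence/diagonal argument.
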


\begin{proof}
Note  the definition of harmonicity involves the \emph{right} $\mu$-random walk. Using left-convolution, we can always approximate a bounded harmonic function by a left uniformly continuous harmonic function and the result follows. 
 \end{proof}

\begin{corollary}
The pair $(G,\mu)$ is Liouville if and only if all left uniformly continuous  $\mu$-harmonic bounded functions are constant. 
\end{corollary}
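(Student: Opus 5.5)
The plan is to prove the two implications separately, using the preceding lemma. For the ``only if'' direction, assume $(G,\mu)$ is Liouville and let $f$ be a left uniformly continuous bounded $\mu$-harmonic function. Then $f$ is in particular a bounded $\mu$-harmonic function, so by the Liouville property it coincides almost everywhere with a constant $c$. Since $f$ is continuous and the Haar measure charges every nonempty open set, the set $\{f\neq c\}$ is an open Haar-null set. It must therefore be empty, so $f$ is constant everywhere.

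For the ``if'' direction, assume every left uniformly continuous bounded $\mu$-harmonic function is constant, and let $f$ be an arbitrary bounded $\mu$-harmonic function. The preceding lemma provides left uniformly continuous bounded $\mu$-harmonic functions $f_n$ with $f_n\to f$ almost everywhere. By hypothesis each $f_n$ equals a constant $c_n$. Pick any point $x$ in the full-measure set where convergence holds; there $c_n=f_n(x)\to f(x)$, so $(c_n)$ converges to some $c\in\R$. Hence $f=c$ almost everywhere, and $(G,\mu)$ is Liouville.

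There is no genuine obstacle here. The only point requiring care is the passage from ``almost everywhere constant'' to ``constant'' in the first direction, which uses continuity of $f$ (left uniform continuity implies continuity) together with full support of the Haar measure. One should also note that $G$ is nonempty, so the convergence set is nonempty. All the analytic content is contained in the regularization lemma, which we take as given.
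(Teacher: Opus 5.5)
Your proof is correct and follows the paper's approach. The paper states this corollary without proof, as an immediate consequence of the regularization lemma, and your two directions supply exactly those routine details: continuity plus full support of Haar measure for ``only if'', and a.e.\ limits of constants for ``if''.
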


\section{Sufficient condition to be Choquet-Deny}\label{if-mainthm}

In this section, we show the ``if'' part in \Cref{main-thm}: 

\begin{proposition} \label{CD-sufficient-Liegrp}
Let $G$ be a  connected Lie group such that for every $x\in [G,G]$, we have $\Ad(G)x$ bounded. Then $G$ is Choquet-Deny.
\end{proposition}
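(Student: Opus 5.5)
We argue by induction on $\dim G$, using the method of periods. Fix an adapted probability measure $\mu$ on $G$ and a bounded $\mu$-harmonic function $f$. By the Corollary at the end of \S\ref{Harmonic functions}, we may assume $f$ is left uniformly continuous. Let $(X_n)$ be the right $\mu$-random walk, $S_n=g_1\cdots g_n$. By the martingale convergence theorem, for almost every trajectory and every $x$, the function $f(xS_n\,\cdot)$ converges, along subsequences, uniformly on compacts to limit functions $h_\omega$. Left uniform continuity and Arzel\`a--Ascoli justify this.

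\textbf{Step 1: commutators give periods of the limit functions.} For $y\in[G,G]$, the conjugates $S_n^{-1}yS_n$ stay in the bounded set $\overline{\Ad(G)y}$ by the hypothesis, so they accumulate. The standard martingale argument gives $f(xS_ng)-f(xS_n)\to 0$ for $\mu$-a.e. $g$, uniformly in the relevant sense. From this we get that $f(xS_n\, g g'g^{-1}g'^{-1})-f(xS_n)\to0$ for $g,g'\in\supp\mu^{*k}$. Writing $xS_n c=x(S_ncS_n^{-1})S_n$ and passing to accumulation points $z$ of $S_n^{-1}cS_n$ or $S_ncS_n^{-1}$, we obtain that such $z$ are right periods of the limit functions $h_\omega$. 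Adaptedness means the commutators $[g,g']$ of elements of the support generate a dense subgroup of $[G,G]$.

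\textbf{Step 2: promote these to periods of $f$ itself.} Let $H\subseteq G$ be the closed subgroup generated by all these accumulation points. We must show $H$ is normal and is independent of $\omega$. Then the limit functions factor through $G/H$, which is a group of smaller dimension or of a simpler type, and still satisfies (ii), since the bounded-conjugacy condition passes to quotients. By induction, the limits are then constant on cosets relevant to the induced walk, and since $f(x)=\E[\lim f(xS_n)]$, $f$ itself is $H$-invariant. So $f$ descends to $G/H$, where we apply the induction hypothesis.

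\textbf{Step 3: base cases via structure theory.} The induction ends in the following situations:
\begin{enumerate}[label=(\alph*)]
\item $[G,G]$ is bounded. Then $\overline{[G,G]}$ is a compact normal subgroup $K$, and harmonic functions are $K$-invariant by a compact-group argument (It\^o--Kawada type), reducing to the abelian group $G/K$, where Choquet--Deny applies.
\item $H$ is trivial. Then every element of $[G,G]$ is fixed under conjugation along the walk, and we use \Cref{autom-K}, \Cref{max-comp-in-nilp} and the Levi decomposition to show that $[G,G]$ acts on $G$ essentially centrally modulo compact subgroups. The periods must then be the elements of $\overline{[G,G]}$ themselves.
\end{enumerate}
In the nilpotent case, (ii) is equivalent to $[G,[G,G]]\subseteq T_G$ (\Cref{max-comp-in-nilp}). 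Quotienting by the compact central $T_G$ reduces us to step $2$, where commutators are central and are immediately periods.

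\textbf{Main obstacle.} Step 2 is the main difficulty: the accumulation points depend on the trajectory $\omega$, so they only give periods of limit functions and not of $f$. My first attempt would be to take the intersection over a full-measure set of the period groups of the $h_\omega$. I would then use the $G$-equivariance of the Poisson boundary map to show this intersection is a normal subgroup containing a non-trivial connected piece of $\overline{[G,G]}$ or its compact part. If it does, the dimension drops and the induction proceeds.
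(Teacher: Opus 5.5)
\textbf{The gap is Step 2.} Your Step 1 is essentially \Cref{extra-inv-b}, with one correction. The square-increment martingale argument does not give $f(xS_n[g,g'])-f(xS_n)\to 0$. The paper instead uses that $f(xS_nuv)$ and $f(xS_nvu)$ both converge to the limit function for $\mu^{\otimes 2}$-typical $(u,v)$, and writes $xS_nuv=x\,\Ad(S_n)[u,v]\,S_nvu$.

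Step 2 you flag but do not resolve. Nothing in your argument produces a point that is an accumulation point of $\Ad(S_n)[u,v]$ for \emph{almost every} trajectory, and without one the periods of the $h_\omega$ say nothing about $f$. Equivariance of the boundary map only gives $P_\omega=g_1P_{\omega'}g_1^{-1}$ for the period groups ($\omega'$ the shifted trajectory). That is a stationary random subgroup, whose essential intersection may a priori be trivial.

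The missing idea is \Cref{accumulation}. If the acting group carries a \emph{Liouville} measure, the accumulation set of a relatively compact random orbit is almost surely a deterministic closed invariant set $Z_y$. The reason: the probability of visiting a ball infinitely often is a bounded harmonic function, hence constant, hence $0$ or $1$.

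To use this inside the induction on $\dim G$, note that conjugation on $\Ad(G)[u,v]$ factors through $G/C_{u,v}$, where $C_{u,v}$ is the largest closed normal subgroup centralizing $[u,v]$. \Cref{centralizer-bnd} gives $\dim C_{u,v}>0$: either the torus of the nilpotent radical $N$ is nontrivial and central, or $N$ is simply connected and boundedness of $\Ad(N^{[s]})[u,v]$ forces $[u,v]$ to commute with the vector group $N^{[s]}$. Hence $G/C_{u,v}$ is Choquet-Deny by induction. Then $f$ is right-invariant under the closed subgroup $I$ generated by the $Z_{[u,v]}$, and $I$ is normal because each $Z_{[u,v]}$ is $\Ad(G)$-invariant.

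Normality matters. The commutators $[u,v]$ with $u,v\in\supp\mu$ need not generate a dense subgroup of $[G,G]$; only their normal closure is dense. For example, if $\supp\mu$ is three generic points of the simply connected free $3$-step nilpotent group on two generators, these commutators span a subgroup of rank at most $3$ in $[G,G]\cong\R^3$.

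\textbf{Even granting Step 2, the induction does not close.}
\begin{enumerate}
\item If $I$ is discrete, the dimension does not drop. The paper first shows $I\neq\{\id\}$ via \Cref{convid}. This is what your case (b) should be: trivial periods mean $\Ad(S_n)[u,v]\to\id$ almost surely. Showing that this forces $[u,v]=\id$ uses compactness of $G/R$ (\Cref{G/R-compact}) and a descent through the vector-space quotients of the iterated closed derived groups of $R$. The paper then iterates on $G/I$, producing a strictly increasing chain of discrete central subgroups, which \Cref{discrete-central-fg} rules out.
\item In your nilpotent shortcut, you cannot quotient by $T_G$ before knowing that $f$ is $T_G$-invariant. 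Since $T_G$ need not lie in $\overline{[G,G]}$, no period argument gives this a priori. \Cref{CD-nilp-setting} instead uses Poincar\'e recurrence on the torus $\overline{G^{[3]}}$ to get invariance under $G^{[3]}$ first, then under $[G,G]$.
\end{enumerate}
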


\bigskip
We start by recalling two well known cases. 

\begin{lemma}[Compact case] \label{CD-compact-case}
Let $G$ be a  compact  Lie group. Then $G$ is Choquet-Deny.
\end{lemma}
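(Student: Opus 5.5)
Fix an adapted probability measure $\mu$ on the compact Lie group $G$. By the corollary closing \S\ref{Harmonic functions}, it suffices to show that every bounded, left uniformly continuous $\mu$-harmonic function $f$ is constant. Such an $f$ is continuous, so $f(x)=\int_G f(xg)\dd\mu(g)$ holds for every $x\in G$ and not just almost everywhere: both sides are continuous in $x$, the right side by dominated convergence together with left uniform continuity.

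The argument is a maximum principle. Since $G$ is compact and $f$ is continuous, $f$ attains its maximum $M$ at some point $x_0$. Set
\[
H=\{h\in G:\ f(x_0h)=M\}.
\]
We show that $H$ is a closed subsemigroup of $G$ containing $\supp\mu$.

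\begin{enumerate}[label=(\arabic*)]
\item $H$ is closed because $f$ is continuous.
\item $\supp\mu\subseteq H$. Harmonicity at $x_0$ gives $M=\int f(x_0g)\dd\mu(g)$ with $f\le M$, so $f(x_0g)=M$ for $\mu$-a.e.\ $g$. By continuity this extends to every $g\in\supp\mu$.
\item $H$ is a semigroup. If $h\in H$, then $x_0h$ is again a maximum point, so step (2) applied at $x_0h$ gives $f(x_0hg)=M$ for all $g\in\supp\mu$. Hence $hg\in H$, i.e.\ $H\supp\mu\subseteq H$. Iterating, $H$ contains the closed semigroup $S$ generated by $\supp\mu$.
\end{enumerate}

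The remaining step is a standard fact about compact groups: a closed subsemigroup $S$ of a compact group is a group. To see this, take $s\in S$. The closure of $\{s^n:n\ge1\}$ is a compact monothetic semigroup, so $s^{n_k}\to \id$ along some subsequence, and $s^{n_k-1}\to s^{-1}$. Hence $\id\in S$ and $s^{-1}\in S$. So $S$ is a closed subgroup containing $\supp\mu$. Since $\mu$ is adapted, $S$ contains the dense subgroup generated by $\supp\mu$, and therefore $S=G$. Thus $f\equiv M$ on $x_0G=G$, and $f$ is constant.

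The main technical point is this compact-semigroup step, together with the upgrade from almost-everywhere to everywhere harmonicity, which is exactly what the regularization lemma supplies. Alternatively, one could cite It\^o--Kawada: the convolution powers of an adapted measure on a compact group have Ces\`aro averages converging to Haar measure. Then $f=\frac1n\sum_{k<n}P_{\mu^{*k}}f\to\int f\,\dd\Haar$, which again shows $f$ is constant.
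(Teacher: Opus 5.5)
Your proof is correct and follows essentially the same route as the paper: a maximum principle showing that the maximal level set of $f$ is stable under right multiplication by $\supp\mu$, combined with the fact that the closed semigroup generated by $\supp\mu$ in a compact group is a group, hence equal to $G$. One small overstatement: you announce that $H$ is a subsemigroup but only verify $H\,\supp\mu\subseteq H$. That weaker inclusion is, however, exactly what is needed to conclude $S\subseteq H$.
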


\begin{proof}
Let $\mu$ be a probability measure on $G$ whose support generates a dense subgroup.  Let $f :G\rightarrow \R$ be a continuous bounded harmonic function. Set $M=\max_{G}f$. By harmonicity and continuity of $f$, the level set $f^{-1}(M)$ is $\supp \mu$-invariant, more precisely: for every $x\in f^{-1}(M)$, for every $g\in \supp \mu$, one has $xg \in f^{-1}(M)$. Set $\Gamma_{\mu}^+$ the semigroup generated by $\supp \mu$. Its closure $\overline{\Gamma_{\mu}^+}$ is a compact semigroup, therefore a group, so $\Gamma_{\mu}^+$ is dense in $G$ as well. This yields that $f^{-1}(M)$ is dense in $G$, so $f$ is constant by continuity. 

\end{proof}

\begin{lemma}[Abelian case] \label{CD-abelian-case}
Let $G$ be an abelian  Lie group. Then $G$ is Choquet-Deny.
\end{lemma}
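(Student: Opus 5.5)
The plan is to give the classical Choquet--Deny argument, adapted to the situation where $\mu$ is only assumed adapted (its support generates a dense subgroup) and to the definition of harmonicity used here (right random walk). By the corollary at the end of \S\ref{Harmonic functions}, it suffices to show that every bounded, left uniformly continuous $\mu$-harmonic function $f:G\to\R$ is constant. For such $f$, harmonicity holds everywhere and not just almost everywhere, since both sides of $f(x)=\int f(xg)\dd\mu(g)$ are continuous in $x$: the right-hand side is continuous by left uniform continuity and dominated convergence.

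\textbf{Translates are harmonic.} Since $G$ is abelian, for every $h\in G$ the translate $f_h(x):=f(xh)=f(hx)$ is again bounded, left uniformly continuous and $\mu$-harmonic. The same holds for the difference $\Delta_h:=f_h-f$.

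\textbf{Extremal argument.} Fix $g_0\in\supp\mu$ and consider $u:=\Delta_{g_0}$. Set $M:=\sup_G u$ and suppose $M>0$. Choose $x_n\in G$ with $u(x_n)\to M$, and consider the functions $u_n(y):=u(x_n y)$. They are uniformly bounded, harmonic, and equicontinuous, since left and right uniform continuity coincide in an abelian group. Because $G$ need not be compact, I would not pass to a limit of the $u_n$. Instead I would use the elementary estimate: since $u\le M$ and $u(x_n)=\int u(x_ng)\dd\mu(g)$, we get
\[
\int \bigl(M-u(x_ng)\bigr)\dd\mu(g)\to 0.
\]
Iterating with the convolution powers gives $\int (M-u(x_n g))\dd\mu^{*k}(g)\to0$ for each fixed $k$. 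By uniform continuity this upgrades to $u(x_n g)\to M$ uniformly for $g$ in any fixed compact subset of $\supp\mu^{*k}$; in particular $u(x_n g_0^{j})\to M$ for each fixed $j\le k$.

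\textbf{Telescoping contradiction.} Telescoping gives
\[
f(x_n g_0^{k})-f(x_n)=\sum_{j=0}^{k-1}u(x_n g_0^{j}),
\]
which is close to $kM$ for large $n$. Choosing $k>2\|f\|_\infty/M$ contradicts the boundedness of $f$. Hence $\sup u\le0$, and applying the same argument to $-u$ gives $u\equiv0$. So $f$ is invariant under translation by every element of $\supp\mu$, hence by the closed group generated by $\supp\mu$, which is all of $G$ because $\mu$ is adapted and $f$ is continuous. Therefore $f$ is constant.

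\textbf{Main obstacle.} The delicate step is the passage from ``the integral of $M-u$ against $\mu^{*k}$ at $x_n$ tends to $0$'' to pointwise convergence at the specific points $x_n g_0^{j}$. This works because $g_0^{j}\in\supp\mu^{*j}$, so every neighborhood $V$ of $g_0^{j}$ has $\mu^{*j}(V)>0$; combined with the uniform continuity of $u$, this forces $u(x_n\,\cdot)$ to be close to $M$ at $g_0^{j}$. One should check that left uniform continuity of $f$ controls $u(x_n v)$ for $v\in V$ uniformly in $n$; this is immediate since $G$ is abelian.
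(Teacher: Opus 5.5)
Your argument is correct, but it takes a different route from the paper's. The paper works with the martingale limit functions $f_b(x)=\lim_n f(xb_1\cdots b_n)$ and with \Cref{limit-decomp}, which says that appending an extra $\mu^{*k}$-distributed factor $g$ to the product does not change the limit (by the square-increment bound). Commutativity then moves $g$ to the front, so each $f_b$ is invariant under $\mu$-almost every $g$, hence under $\supp\mu$, hence constant, and $f=\int f_b\dd\beta(b)$ is constant.

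You instead give a deterministic maximum-principle argument. The difference $u=f(\cdot g_0)-f$ is harmonic because $G$ is abelian. A near-maximizing sequence forces $u(x_ng_0^j)\to\sup u$ for each fixed $j$, using $g_0^j\in\supp\mu^{*j}$ and the uniform continuity of $u$. The telescoping identity $f(x_ng_0^k)-f(x_n)=\sum_{j<k}u(x_ng_0^j)$ then contradicts boundedness once $k>2\|f\|_\infty/\sup u$. All of these steps check out, including the sign-reversed version for $-u$ and the final passage from invariance under $\supp\mu$ to constancy.

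What each route buys: yours is more elementary, with no martingale convergence and no square-increment estimate. However, it hinges on right translates $f(\cdot h)$ of a $\mu$-harmonic function being $\mu$-harmonic. In general $f(\cdot h)$ is only harmonic for the conjugated measure $\Ad(h^{-1})_\star\mu$, so this fails once $h$ does not commute with $\supp\mu$. The paper chose its proof because the limit functions and \Cref{limit-decomp} are reused in \Cref{extra-inv-b}. There, inserting $uv$ versus $vu$ at the end of the walk turns accumulation points of the conjugated commutators $\Ad(b_1\cdots b_n)[u,v]$ into right periods of $f_b$. That mechanism drives the non-abelian case, and your telescoping argument has no analogue of it.
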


This result is established in  \cite{ChoquetDeny1960}. We give an alternative proof extracted from \cite{Raugi04}. It yields insight on the general case. 

\bigskip
Let $G$ be a Lie group, $\mu$ a probability measure on $G$. Set $(B, \beta)=(G^{\N^*}, \mu^{\otimes \N^*})$ the space of instructions. Let $f : G \rightarrow \R$ be a  bounded harmonic   function on $G$ which is left uniformly continuous. Given $x\in G$, the sequence of functions $\varphi_{n} : b\mapsto f(xb_{1}\dots b_{n})$ is a bounded martingale on $(B, \beta)$, whence it converges $\beta$-almost surely. Using a countable dense family of points $x$ in $G$ and the left uniform continuity of $f$, we deduce there exists a measurable collection of left uniformly continuous bounded functions $(f_{b}:G\rightarrow \R)_{b\in B}$ such that: 
  \begin{align} \label{decomp-f}
f=\int_{B}\,f_{b} \dd\beta(b)
\end{align}
and for $\beta$-almost every $b\in B$, for all  $x\in G$, 
\begin{align} \label{def-fb}
f_{b}(x)=\lim_{n} f(x b_{1}\dots b_{n}). 
\end{align}
We call $(f_{b})_{b\in B}$ the \emph{limit functions} associated to $f$. The next lemma strengthens \eqref{def-fb} by allowing us to insert an extra factor $g$ after $b_{n}$.  We denote by $\mu^{*k}$ the $k$-fold convolution power of $\mu$.

\begin{lemma}\label{limit-decomp}
For $k\geq 1$, for $\beta \otimes \mu^{*k}$-almost every $(b,g)\in G^{\N^*}\times G$, for all $x\in G$, we have 
\begin{align} \label{eqfbg}
f_{b}(x)=\lim_{n} f(x b_{1}\dots b_{n} g) 
\end{align}
\end{lemma}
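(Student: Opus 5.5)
We will prove \eqref{eqfbg} by an $L^2$ martingale argument: increments of a bounded martingale are square-summable, which forces the extra factor $g$ to be asymptotically invisible. Throughout, $b'=(b_1,\dots,b_n,g_1,\dots,g_k,\dots)$ denotes the sequence obtained by inserting $k$ fresh independent $\mu$-distributed steps after time $n$.

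\emph{Step 1: a fixed $x$.} Fix $x\in G$ and consider the martingale $M_m(b)=f(xb_1\cdots b_m)$. It is bounded by $\|f\|_\infty$, so orthogonality of increments gives
\[
\sum_{m\ge0}\E\bigl[(M_{m+1}-M_m)^2\bigr]=\lim_m \E[M_m^2]-M_0^2\le \|f\|_\infty^2 .
\]
Hence $\E[(M_{n+k}-M_n)^2]\to 0$ as $n\to\infty$, using Cauchy--Schwarz: $(M_{n+k}-M_n)^2\le k\sum_{j<k}(M_{n+j+1}-M_{n+j})^2$.

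Now $M_{n+k}$ has the same law as $(b,g)\mapsto f(xb_1\cdots b_n g)$ under $\beta\otimes\mu^{*k}$, jointly with $M_n$. Therefore
\[
\int \bigl|f(xb_1\cdots b_ng)-f(xb_1\cdots b_n)\bigr|^2 \dd\beta(b)\dd\mu^{*k}(g)\longrightarrow 0 .
\]

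\emph{Step 2: almost-sure convergence.} We need almost-sure convergence, not just $L^2$. Define $N_n(b,g)=f(xb_1\cdots b_n g)$. By harmonicity, $\E[N_n\mid b_1,\dots,b_n]=P_{\mu^{*k}}f(xb_1\cdots b_n)=M_n$, but $(N_n)_n$ is not itself a martingale in $n$ for fixed $g$.

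A cleaner route is to use summability. Set $D_m=\E[(M_{m+1}-M_m)^2]$, so $\sum_m D_m<\infty$ by Step 1. Then
\[
\E\Bigl[\sum_n (N_n-M_n)^2\Bigr]\le k\sum_n\sum_{j<k}D_{n+j}\le k^2\sum_m D_m<\infty .
\]
So for almost every $(b,g)$ the sum $\sum_n (N_n-M_n)^2$ is finite, which gives $N_n-M_n\to0$. Since $M_n\to f_b(x)$ almost surely by \eqref{def-fb}, we obtain \eqref{eqfbg} for this fixed $x$.

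\emph{Step 3: all $x$ simultaneously.} Take a countable dense set $\{x_i\}\subseteq G$ and intersect the corresponding full-measure sets. For an arbitrary $x$, write $x=ux_i$ with $u$ in a small neighbourhood of the identity. Left uniform continuity of $f$ gives
\[
|f(xb_1\cdots b_ng)-f(x_ib_1\cdots b_ng)|<\eps \quad\text{uniformly in } n,
\]
and the limit functions satisfy $|f_b(x)-f_b(x_i)|\le\eps$. Letting $\eps\to0$ concludes the proof.

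The main obstacle is Step 2, upgrading $L^2$ convergence to almost-sure convergence. The summability of squared martingale increments is the key input that handles it.
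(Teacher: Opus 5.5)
Your proof is correct and is essentially the paper's argument. The paper also shows that $\sum_{n}(f(xb_1\cdots b_ng)-f(xb_1\cdots b_n))^2$ has finite $\beta\otimes\mu^{*k}$-integral, computing $\E[(N_n-M_n)^2]=\E[M_{n+k}^2]-\E[M_n^2]$ via harmonicity and telescoping to the bound $k\sup f^2$ (your Cauchy--Schwarz step only loses a harmless extra factor $k$, since orthogonality of increments already gives equality), and it reduces to a fixed $x$ by left uniform continuity as in your Step 3.
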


The result already appears in Raugi  \cite[Lemma~1.7 and Corollaries~1.8 and~1.10]{Raugi1978}, and in fact, the square-increment argument underlying the proof precedes it in Cartier's exposition of Azencott's
work \cite[proof of Theorem~3, p.~121]{Cartier1971}. We include the short proof for completeness.

\begin{proof}
One may suppose $x$ fixed (using left uniform continuity of $f$). For $b\in B$, $g\in G$, set
 $$S(b,g):=\sum_{n\geq 0} (f(x b_{1}\dots b_{n} g) -f(x b_{1}\dots b_{n} ) )^2.$$
Using that $f$ is $\mu$-harmonic, we can upper bound the integral of $S$:
$$\int_{B\times G} S \, \dd \beta \dd \mu^{*k} =\sum_{n\geq 0} \E_{\beta}(f(xb_{1}\dots b_{n+k})^2)-\E_{\beta}(f(xb_{1}\dots b_{n})^2)\leq k\sup f^2<\infty.$$  
It follows that $S$ is $\beta\otimes \mu^{*k}$-almost surely finite, which proves the lemma. 
\end{proof}

\begin{proof}[Proof of \Cref{CD-abelian-case}]
Let $\mu$ be a probability measure on $G$ whose support generates a dense subgroup.  Let $f :G\rightarrow \R$ be a left uniformly continuous bounded $\mu$-harmonic  function, and $(f_{b})_{b\in B}$ its limit functions defined above. Using that $G$ is abelian and \Cref{limit-decomp}, we see that for $\beta$-almost every $b\in B$, and $\mu$-almost every $g\in G$, one has $f_{b}=f_{b}(g \cdot)$, i.e. $f_{b}$ is $\supp \mu$-invariant. It follows that $f_{b}$ is constant, whence $f$ as well.
\end{proof}

In order to tackle the general case, we need a few preliminary results. Set $[u,v]=uvu^{-1}v^{-1}$ the commutator of  $u,v\in G$.

\begin{lemma}[Extra-invariance] \label{extra-inv-b}
Let $G$ be a  Lie group. Let $\mu$ be a probability measure on $G$, let $f:G\rightarrow \R$ be a   bounded $\mu$-harmonic function which is left uniformly
continuous. Write $(f_{b})_{b\in B}$ the associated limit functions. 

For $\beta$-almost every $b\in B$, for $\mu^{\otimes 2}$-almost every $(u,v)\in G^2$, for all accumulation points $z\in \Acc( (\Ad(b_{1}\dots b_{n})[u,v])_{n\geq 0} )$, we have $f_{b}=f_{b}(\cdot z)$
\end{lemma}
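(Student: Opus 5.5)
The plan is to compare the two products $uv$ and $vu$ after the random walk, using \Cref{limit-decomp} with $k=2$ for both. The commutator relation $uv=[u,v]\,vu$ turns the difference between them into a left translation by $\Ad(b_{1}\dots b_{n})[u,v]$. Left uniform continuity of $f$ then lets us replace this translation by its accumulation point $z$.

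\textbf{Step 1: choosing the full-measure set.} Apply \Cref{limit-decomp} with $k=2$. This gives a set $E\subseteq B\times G$ of full $\beta\otimes\mu^{*2}$-measure such that for $(b,g)\in E$ and all $x\in G$,
\[
f_{b}(x)=\lim_{n} f(xb_{1}\dots b_{n}g).
\]
Consider the two maps $\Phi_{1}(b,u,v)=(b,uv)$ and $\Phi_{2}(b,u,v)=(b,vu)$ from $B\times G^2$ to $B\times G$. Both push $\beta\otimes\mu\otimes\mu$ forward to $\beta\otimes\mu^{*2}$. Hence $\Phi_{1}^{-1}(E)\cap\Phi_{2}^{-1}(E)$ has full measure. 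By Fubini, for $\beta$-a.e. $b$ and $\mu^{\otimes2}$-a.e. $(u,v)$, we have simultaneously, for all $x\in G$,
\[
f_{b}(x)=\lim_{n} f(xS_{n}uv)=\lim_{n} f(xS_{n}vu),
\]
where $S_{n}:=b_{1}\dots b_{n}$.

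\textbf{Step 2: the commutator identity.} Fix such $b,u,v$, and set $z_{n}:=\Ad(S_{n})[u,v]=S_{n}[u,v]S_{n}^{-1}$. Since $uv=[u,v]\,vu$, we get
\[
xS_{n}uv=x\,z_{n}\,S_{n}vu .
\]
Let $z$ be an accumulation point, with $z_{n_{j}}\to z$. Write
\[
xz_{n_{j}}S_{n_{j}}vu=w_{j}\cdot (xz)S_{n_{j}}vu, \qquad w_{j}:=xz_{n_{j}}z^{-1}x^{-1}.
\]
Then $w_{j}\to\id$. Left uniform continuity of $f$ gives $\sup_{y}|f(w_{j}y)-f(y)|\to0$, and therefore
\[
f(xS_{n_{j}}uv)-f(xzS_{n_{j}}vu)\to 0 .
\]

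\textbf{Step 3: conclusion.} Along the subsequence $n_{j}$, the left term tends to $f_{b}(x)$ and the right term tends to $f_{b}(xz)$; both follow from Step 1, the second applied at the point $xz$, which is allowed because Step 1 holds for all $x$. Hence $f_{b}(x)=f_{b}(xz)$ for every $x$ and every accumulation point $z$.

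The only delicate point is Step 1. The null sets must be chosen independently of $x$ and of $z$, and this is exactly what the "for all $x$" quantifier in \Cref{limit-decomp} provides. The pushforward argument also requires both $uv$ and $vu$ to be $\mu^{*2}$-distributed under $\mu\otimes\mu$; this holds because $\mu\otimes\mu$ is a product measure, so swapping the coordinates preserves it.
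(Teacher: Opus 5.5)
Your proposal is correct and is essentially the paper's argument. You apply \Cref{limit-decomp} with $k=2$ to both $uv$ and $vu$, rewrite $xS_nuv = x\,\Ad(S_n)[u,v]\,S_nvu$, and use left uniform continuity along a subsequence converging to $z$. The only difference is that you spell out the pushforward/Fubini step showing that both products are $\mu^{*2}$-distributed, which the paper leaves implicit.
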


In the above, given a sequence $(x_{n})_{n\geq0}$ in a metric space $X$, we write $\Acc((x_{n})_{n\geq 0})$ its set of accumulation points, meaning the set of points $y\in X$ such that $(x_{n})_{n\geq0}$ has a subsequence converging to $y$. 

%\begin{lemma}[Extra-invariance] \label{extra-inv}
%Let $G$ be a  Lie group. Let $\mu$ be a probability measure on $G$ and $f:G\rightarrow \R$ a   bounded $\mu$-harmonic function which is left uniformly
%continuous. Let $G\times G\rightarrow G, (u,v)\mapsto z_{u,v}$ be a map such that for $\mu^{\otimes 2}$-almost every $(u,v)\in G^2$, for $\beta$-almost every $b\in B$, the point $z_{u,v}$ is an accumulation point of  $(\Ad(b_{1}\dots b_{n})[u, v])_{n\geq 0}$. Then  for $\mu^{\otimes 2}$-almost every $(u,v)$, for every $x\in G$, we have 
%$$f(x)=f(xz_{u,v}). $$
%\end{lemma}

%\noindent\emph{Remark}. 
%The parameter $z_{u,v}$ must not depend on the sequence of instructions $b$ selected by $\beta$. In other terms, $z_{u,v}$ is an accumulation point of $(\Ad(b_{1}\dots b_{n})[u, v])_{n\geq 0}$ uniformly for $\beta$-almost-every $b$. Below, as we seek to apply  \Cref{extra-inv}, it will be easy to check that the sequence $(\Ad(b_{1}\dots b_{n})[u, v])_{n\geq 0}$ has an accumulation point for any $b\in B$ (by compactness hypothesis), but we will need to work to see  that this accumulation point may be chosen independently of $b$ (restricted to a $\beta$-measure $1$ subset). 

\begin{proof} 
By \Cref{limit-decomp} we  have for $\beta$-almost every $b\in B$, for $\mu^{\otimes 2}$-almost every $u,v\in G$, every $x\in G$, 
\begin{equation}\label{trick-uv-vu}
f_{b}(x)=\lim f(x b_{1}\dots b_{n} u v) =\lim f(x b_{1}\dots b_{n} vu).
\end{equation}
The first equality in  \eqref{trick-uv-vu} can be rewritten 
\begin{equation}\label{uv-inside}
f_{b}(x)=\lim f(x \Ad(b_{1}\dots b_{n})[u, v]  b_{1}\dots b_{n}  vu).
\end{equation}
Let $z\in \Acc( (\Ad(b_{1}\dots b_{n})[u,v])_{n\geq 0} )$. Let $(n_{j})_{j\geq0}$ be a sequence of integers going to infinity such that $\Ad(b_{1}\dots b_{n_{j}})[u, v]  \to_{j} z$.  By \eqref{uv-inside} and left uniform continuity of $f$, we deduce
$$f_{b}(x)=\lim_{j} f(x z b_{1}\dots b_{n_{j}}  vu).$$
Using the second equality in \eqref{trick-uv-vu}, we get 
$$f_{b}(x)= f_{b}(x z).$$
\end{proof}

In view of \Cref{extra-inv-b}, we obtain that a bounded harmonic function $f$ is invariant by right multiplication under points $z$ which are accumulation points common to all sequences $(\Ad(b_{1}\dots b_{n})[u,v])_{n \geq 0}$ for a fixed $\mu^{\otimes 2}$-typical pair $(u,v)$, and $b$ in a $\beta$-full subset of $B$. 

\begin{corollary}\label{extra-inv}
Keep the notations of  \Cref{extra-inv-b}. For $\mu^{\otimes 2}$-almost every $(u,v)\in G^2$, for all $z\in G$ such that  for  $\beta$-almost every $b\in B$, we have 
$z\in \Acc( (\Ad(b_{1}\dots b_{n})[u,v])_{n\geq 0} )$, we have $f=f( \cdot z)$. 
\end{corollary}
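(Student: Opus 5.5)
The plan is to combine \Cref{extra-inv-b} with the decomposition \eqref{decomp-f}, using a Fubini argument to swap the order of the quantifiers. First, we apply \Cref{extra-inv-b} and Fubini's theorem on $B\times G^2$ with the measure $\beta\otimes\mu^{\otimes 2}$. The set of triples $(b,u,v)$ for which the conclusion of \Cref{extra-inv-b} holds is measurable, or at least contains a measurable full-measure set; measurability can be handled via the countable dense family and the left uniform continuity used to build the limit functions. This set has full measure. Hence for $\mu^{\otimes 2}$-almost every $(u,v)$, there is a $\beta$-full set $B_{u,v}\subseteq B$ such that for every $b\in B_{u,v}$ and every $z\in \Acc((\Ad(b_{1}\dots b_{n})[u,v])_{n\geq0})$, we have $f_{b}=f_{b}(\cdot z)$.

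Now we fix such a typical pair $(u,v)$, and we fix $z\in G$ satisfying the hypothesis. By assumption, $z$ is an accumulation point for all $b$ in some $\beta$-full set $B'_{z}$. On the $\beta$-full set $B_{u,v}\cap B'_{z}$, we then get $f_{b}=f_{b}(\cdot z)$ identically on $G$. Integrating over $b$ and using \eqref{decomp-f}, we obtain for every $x\in G$
\[
f(xz)=\int_{B} f_{b}(xz)\dd\beta(b)=\int_{B} f_{b}(x)\dd\beta(b)=f(x).
\]
This gives $f=f(\cdot z)$.

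The only delicate point is the order of quantifiers. \Cref{extra-inv-b} states ``for a.e.\ $b$, for a.e.\ $(u,v)$'', whereas the corollary needs ``for a.e.\ $(u,v)$, for a.e.\ $b$''. Crucially, the exceptional $\beta$-null set must be independent of $z$, because $z$ ranges over possibly uncountably many values. This is handled by the structure of \Cref{extra-inv-b}: its conclusion is uniform over all accumulation points, so once $(b,u,v)$ lies in the good set, every $z$ is covered. Fubini only needs to be applied to the single good set of triples. One must also check that this set is measurable. To do so, I would replace ``for all accumulation points'' by the equivalent countable condition obtained from the countable dense set of $x$ and the continuity of $f_b$. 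Alternatively, I would simply note that the proof of \Cref{extra-inv-b} produces the good set as the set where the limits in \eqref{trick-uv-vu} exist and agree, which is measurable.
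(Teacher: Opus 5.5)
Your proposal is correct and follows the paper's own proof: apply \Cref{extra-inv-b} to get $f_{b}=f_{b}(\cdot z)$ for $\beta$-almost every $b$, then integrate over $b$ using \eqref{decomp-f}. The only difference is that you make explicit the Fubini exchange of the quantifiers over $b$ and $(u,v)$, and the measurability of the good set, which the paper's one-line proof leaves implicit.
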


\begin{proof} \Cref{extra-inv-b} implies $f_{b}=f_{b}(\cdot z)$ for $\beta$-almost every $b$, then $f=f(\cdot z)$ by integrating $b$ according to the law  $\beta$.

\end{proof}

From there it is easy to establish \Cref{CD-sufficient-Liegrp} in the nilpotent setting.

\begin{lemma}[Nilpotent case] \label{CD-nilp-setting}
Let $G$ be a connected nilpotent Lie group with $G^{[3]}$ bounded. Then $G$ is Choquet-Deny.
\end{lemma}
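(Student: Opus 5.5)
The plan is to show that every left uniformly continuous bounded $\mu$-harmonic function $f$ is right-invariant under every commutator $[u,v]$ with $(u,v)$ $\mu^{\otimes 2}$-typical. We then pass to a quotient and finish with \Cref{CD-abelian-case}. Fix an adapted $\mu$ and such an $f$.

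\emph{Step 1 (commutators are almost central).} By \Cref{max-comp-in-nilp}, $G^{[3]}$ is bounded, so it lies in the compact central subgroup $T_G$. For any $g,u,v\in G$ we have $\Ad(g)[u,v]=[u,v]\cdot c$ for some $c\in \overline{G^{[3]}}$. Indeed, $[u,v]\in G^{[2]}$, and modulo $\overline{G^{[3]}}$ the subgroup $G^{[2]}$ is central. Hence for every $b\in B$ the sequence $(\Ad(b_1\cdots b_n)[u,v])_{n\ge0}$ stays in the compact set $[u,v]\,\overline{G^{[3]}}$ and has accumulation points. The difficulty is that \Cref{extra-inv} requires an accumulation point common to $\beta$-almost every $b$. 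I would get around this by first killing $\overline{G^{[3]}}$.

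\emph{Step 2 (invariance under the compact central part).} Set $C=\overline{G^{[3]}}$, a compact central subgroup. I claim $f$ is right $C$-invariant. By \Cref{extra-inv-b}, for $\beta$-a.e.\ $b$ and typical $(u,v)$, every accumulation point $z$ of $\Ad(b_1\cdots b_n)[u,v]$ is a right period of $f_b$. Two such periods $z,z'$ satisfy $z^{-1}z'\in C$. So the periods of $f_b$ inside $C$ include the differences of accumulation points, but these might all be trivial.

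The better route is to apply \Cref{extra-inv-b} to the pairs $(u,v)$ drawn from $\mu^{*k}$ for all $k$. This is legitimate because \Cref{limit-decomp} holds for every $k$, and the proof of \Cref{extra-inv-b} only uses it. Then I would argue by induction on the lower central series inside $C$, or directly as follows. Let $H_b\subseteq G$ be the closed group of right periods of $f_b$. Its image in $G/C$ contains the images of all $[u,v]$ with $u,v$ in the semigroup generated by $\supp\mu$. Since $G/C$ is $2$-step nilpotent, $[u,v]$ is bi-multiplicative there. Using density of the generated group, $H_bC\supseteq \overline{G^{[2]}}C$. Now take $w\in \overline{G^{[2]}}$, written as a limit of products of typical commutators, together with any $g$ from the walk. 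By Step 1 the conjugates $\Ad(g)w$ lie in $wC$, and commutators $[g,w]$ sweep out a generating set of $G^{[3]}$. Since $H_b$ contains some $wc$ for each such $w$ and is a group, it contains $[g,wc]=[g,w]$ (as $c$ is central). These commutators topologically generate $C$ modulo the torus part; compactness plus a second application handles the rest. Thus $C\subseteq H_b$ for a.e.\ $b$, and integrating gives $f=f(\cdot\, C)$.

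\emph{Step 3 (reduction).} Once $f$ is right $C$-invariant, $f$ descends to a $\bar\mu$-harmonic function on $G/C$, which is nilpotent of step $\le 2$. There every commutator $[u,v]$ is central, so $\Ad(b_1\cdots b_n)[u,v]=[u,v]$ for all $b$. \Cref{extra-inv} then gives that $f$ is right-invariant under $[u,v]$ for typical $(u,v)$, and also for $u,v$ drawn from $\mu^{*k}$. By bilinearity modulo the center and density of the generated subgroup, $f$ is invariant under $\overline{[G,G]}C$. So $f$ factors through the abelian group $G/\overline{[G,G]}C$, on which the image of $\mu$ is adapted, and \Cref{CD-abelian-case} makes $f$ constant.

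The main obstacle is Step 2: producing the periods in $C$ simultaneously for almost every $b$. The cleanest fix I would try is to work with the group of right periods of $f_b$ and exploit that $[g,wc]=[g,w]$ for central $c$. This converts the uncertainty in $C$ coming from Step 1 into exact commutator periods.
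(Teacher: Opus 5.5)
\textbf{There is a genuine gap in Step 2.} It sits in the sentence ``since $H_b$ contains some $wc$ \dots\ and is a group, it contains $[g,wc]=[g,w]$''.

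The commutator $[g,wc]=g(wc)g^{-1}(wc)^{-1}$ lies in $H_b$ only if the conjugate $g(wc)g^{-1}$ does, that is, only if $g$ normalizes $H_b$. The group of right periods of $f_b$ has no reason to be conjugation-invariant: knowing $f_b(yz)=f_b(y)$ for all $y$ says nothing about $f_b(ygzg^{-1})$. In fact, writing $b'=(b_2,b_3,\dots)$, one has $f_b(x)=f_{b'}(xb_1)$ and hence $H_{b'}=b_1^{-1}H_bb_1$. So conjugation transports periods of $f_b$ to periods of a \emph{different} limit function.

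What you have actually shown is only that $H_b\cap\overline{G^{[2]}}$ meets every coset $wC$. Since $\overline{G^{[2]}}$ is abelian, this is compatible with $H_b\cap C$ being trivial. Using the element $(\Ad(g)w)\,c'\in H_b$ does not help: it only gives $[g,w]\,c'c^{-1}\in H_b$, with $c,c'\in C$ uncontrolled, which is an arbitrary element of $C$. Likewise, ``compactness plus a second application handles the rest'' is not an argument.

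So you correctly identified the obstacle, namely that the accumulation points in $[u,v]C$ depend on $b$, but Step 2 does not overcome it. Step 3, which is fine on its own, therefore never gets started.

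\textbf{The missing idea is recurrence on the compact group $C$.} Because $C$ is central and $G$ has step at most $3$, the map $\phi_{u,v}(g)=[g,[u,v]]$ is a continuous homomorphism $G\to C$. Moreover $\Ad(b_1\cdots b_n)[u,v]=[u,v]\,\phi_{u,v}(b_1)\cdots\phi_{u,v}(b_n)$, so the conjugated commutator is exactly $[u,v]$ translated by a random walk on a compact abelian group.

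Such a walk is recurrent (Poincar\'e recurrence). Hence for $\beta$-a.e.\ $b$ it accumulates at $\id$ and at $\phi_{u,v}(w)$ for every $w\in\supp\mu$. Thus $[u,v]$ and $\Ad(w)[u,v]$ are accumulation points common to almost every $b$, and \Cref{extra-inv} makes them exact right periods of $f$ itself, not merely of $f_b$.

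Right periods form a group, so $[w,[u,v]]=\Ad(w)[u,v]\,[u,v]^{-1}$ is also a period, with no normality needed. Multilinearity of $(w,u,v)\mapsto[w,[u,v]]$ together with density then yields $G^{[3]}$. Since $[u,v]$ itself is already a period, your Steps 2 and 3 collapse into a single argument, which is how the paper proceeds.
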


\begin{proof}
Consider $\mu$ a probability measure on $G$ whose support spans a dense subgroup, and $f$ a bounded $\mu$-harmonic   function which is  left uniformly continuous. By \Cref{CD-abelian-case}, it is sufficient to show that $f$ is invariant under $[G,G]$. 

Note that the relative compactness of $G^{[3]}$ implies that $G^{[3]}$ is central, hence $G$ has step at most $3$. The map $\phi:G^3\rightarrow G^{[3]}, (a,b,c)\mapsto [a,[b,c]]$ is thus a homomorphism in each variable $a,b,c$. In particular, given $u,v, g_{1},g_{2}\in G$, we  have 
$$\Ad(g_{1}g_{2})[u,v]= \phi(g_{1}g_{2},u,v)[u,v]=\phi(g_{1},u,v)\phi(g_{2},u,v)[u,v],$$
therefore acting by successive conjugations on $[u,v]$ amounts to moving $[u,v]$  by successive translations from the torus $\overline{G^{[3]}}$. By the Poincar\'e recurrence theorem, we deduce that 
for every $u,v\in G$, $w\in \{\id\}\cup\supp \mu$, and $\beta$-almost every $b\in B$, the element $\Ad(w)[u,v] $ is an accumulation point of the sequence $\Ad(b_{1}\dots b_{n})[u,v] $.

In view of \Cref{extra-inv}, we have justified that $f$ is right-invariant under the closed subgroup $I$ generated by $\{\Ad(w)[u,v]  \,:\, u,v\in\supp \mu, w\in \{\id\}\cup \supp \mu\}$. Note that $I$ contains  $(\Ad(w)[u,v] ) [u,v]^{-1}=[w, [u,v]]$. Recalling that $\phi$ is a homomorphism in each variable,  and the assumption on $\supp \mu$, we deduce that $I$ contains $G^{[3]}$. Hence $I \cap G^{[2]}$ is normal, so $I$ as well by passing to the closure. Using again the assumption on $\supp \mu$, we obtain that $G/I$ is abelian, so $I\supseteq G^{[2]}$ and this finishes the proof.
\end{proof}

We now aim to deal with  general groups. For the nilpotent case above, we relied on  Poincar\'e's recurrence theorem to obtain some accumulation points which are common to all sequences $(\Ad(b_{1}\dots b_{n})[u, v])_{n\geq 0}$ parametrized by a fixed $[u,v]$, and $b$ in a  $\beta$-full set. This allowed us to apply   \Cref{extra-inv} and get some extra invariance for bounded harmonic functions. For the general case, we show accumulation sets do not depend on instructions via the following result.

\begin{lemma}\label{accumulation}
Let $(H, \mu)$ be a Lie group endowed with a Liouville probability measure. Let $Y$ be a  metric space endowed with a continuous\footnote{The continuity requirement means that the action map $H\times Y \rightarrow Y, (h,y)\mapsto hy$ is continuous.} action of $H$. 
Let $y\in Y$ with $Hy$ relatively compact. There exists a non-empty closed  $H$-invariant set $Z_{y}\subseteq \overline{Hy}$ such that
 for $\mu^{\otimes \N^*}$-almost every $b\in H^{\N^*}$,  the set of accumulation points of the sequence $(b_{1}\dots b_{n}y)_{n\geq0}$ is exactly $Z_{y}$.
 \end{lemma}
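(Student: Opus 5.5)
The plan is to show that the random closed set $A(b):=\Acc((b_{1}\dots b_{n}y)_{n\geq0})$ is almost surely constant, using the Liouville property through a family of harmonic functions indexed by open sets. Set $K=\overline{Hy}$, which is compact; in particular $K$ is separable, so we may fix a countable base $(U_{i})_{i}$ of open subsets of $Y$ meeting $K$. Every sequence in $K$ has accumulation points, so $A(b)$ is a nonempty closed subset of $K$ for every $b$.

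The key structural identity comes from continuity of the action and compactness of $K$: for every $h\in H$ we have $\Acc((hb_{1}\dots b_{n}y)_n)=hA(b)$, and $A(b)=b_{1}A(\sigma b)$, where $\sigma$ is the shift. For an open set $U$, define
\[
\varphi_{U}(h)=\beta\bigl(\{b : hA(b)\cap U\neq\emptyset\}\bigr).
\]
Measurability will come from writing the event as $\bigcap_{m}\bigcap_{N}\bigcup_{n\geq N}\{hb_{1}\dots b_{n}y\in U_{m}'\}$, where $U_{m}'$ runs over basic sets whose closures lie in $U$. Since $b_{1}$ is independent of $\sigma b$, the identity $hA(b)=hb_{1}A(\sigma b)$ gives $\varphi_{U}(h)=\int\varphi_{U}(hg)\dd\mu(g)$ for \emph{every} $h$. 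Thus $\varphi_{U}$ is a bounded $\mu$-harmonic function. For the same reason, $\varphi_{U}(b_{1}\dots b_{n})$ is the conditional probability of the event $E_{U}=\{A(b)\cap U\neq\emptyset\}$ given $b_{1},\dots,b_{n}$. By L\'evy's $0$--$1$ law, $\varphi_{U}(b_{1}\dots b_{n})\to\mathbbm{1}_{E_{U}}(b)$ almost surely.

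The main obstacle is that the Liouville hypothesis only gives $\varphi_{U}$ constant Haar-almost everywhere, whereas we need information along the walk $b_{1}\dots b_{n}$, whose law may be singular. To overcome this, I would regularize on the left. Take a continuous bump $\rho\geq0$ with $\int\rho=1$ supported in a small neighbourhood $V$ of the identity, and set $\psi(h)=\int\rho(k)\varphi_{U}(kh)\dd k$. Then $\psi$ is harmonic and left uniformly continuous, hence continuous. Being a.e. constant, it is therefore \emph{everywhere} constant, say equal to $c_{\rho}$. The martingale argument applied to $\psi(b_{1}\dots b_{n})$ shows that the random variable $\int\rho(k)\mathbbm{1}\{kA(b)\cap U\neq\emptyset\}\dd k$ has conditional expectations equal to $c_{\rho}$ at every stage. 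By the $0$--$1$ law, this variable equals $c_{\rho}$ almost surely. Now shrink $V$, and compare $U$ with slightly smaller and slightly larger open sets $U'\Subset U\Subset U''$ taken from the base, using that $kA\cap U'\neq\emptyset$ implies $A\cap U\neq\emptyset$ for $k$ close to the identity, uniformly on the compact $K$. This should yield that, for each basic $U_{i}$, the event $E_{U_{i}}$ has probability $0$ or $1$, up to the inclusions $U'\subset U\subset U''$, which are harmless when we intersect over the whole countable base.

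Finally, we set
\[
Z_{y}=K\setminus\bigcup\{U_{i}:\beta(E_{U_{i}})=0\}.
\]
Intersecting over the countable family, $A(b)=Z_{y}$ almost surely. This set is closed, contained in $\overline{Hy}$, and nonempty because $A(b)$ is. For $H$-invariance, the regularization step shows that $\beta(hA(b)\cap U\neq\emptyset)$ does not depend on $h$. Hence $hZ_{y}$ and $Z_{y}$ meet the same basic open sets, so $hZ_{y}=Z_{y}$.
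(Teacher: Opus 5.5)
Your argument is correct, but its key step runs differently from the paper's. The paper works with $f_U(h)$, the probability that $(hb_1\cdots b_ny)_n$ visits $U$ infinitely often. It gets $f_U=c_U$ only Haar-almost everywhere from the Liouville property, and argues $c_U\in\{0,1\}$ by conditioning on the first steps of the walk. It then never examines the walk from $\id$ directly. Instead it fixes one Haar-generic basepoint $h_0$ with $f_{U_k}(h_0)=c_{U_k}$ for every set of a countable base, reads off that the accumulation set of $(h_0b_1\cdots b_ny)_n$ is almost surely $Z_y$, and returns to the identity through the $H$-invariance of $Z_y$ (coming from $c_{hU}=c_U$).

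You instead mollify on the left, so the harmonic function becomes continuous and hence constant \emph{everywhere}. This lets you run L\'evy's $0$--$1$ law along the walk started at $\id$ even though $\mu^{*n}$ may be singular; that is the same issue the paper's ``argue conditionally'' step must handle, e.g.\ by starting from an absolutely continuous law. The price is that you only get the averaged identity $X_\rho=c_\rho$, not a $0$--$1$ law for $E_U$ itself, so you need the fattening $U'\Subset U\Subset U''$ together with compactness of the $H$-invariant set $K$. A shortcut between the two routes is to condition on $k$ as well, i.e.\ to use the filtration $\sigma(k,b_1,\dots,b_n)$ under $\rho(k)\dd k\otimes\beta$. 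Then $kb_1\cdots b_n$ has an absolutely continuous law, so the martingale gives $\mathbbm{1}\{kA(b)\cap U\neq\emptyset\}=c_U$ almost surely, an honest $0$--$1$ law with no fattening, and the paper's generic-basepoint trick finishes.

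Three points in your write-up need tightening.

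\emph{Measurability.} In your formula the outer operation over $m$ must be a union, not an intersection: $hA(b)$ meets $U$ iff the sequence visits \emph{some} $U'_m$ with $\overline{U'_m}\subseteq U$ infinitely often.

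\emph{Fattening.} Choose $V$ by the tube lemma so that $V(\overline{U'}\cap K)\subseteq U$ and $V(K\smallsetminus U'')\cap\overline U=\emptyset$. Then $X_\rho=1$ on $E_{U'}$ and $X_\rho=0$ off $E_{U''}$, so $\beta(E_{U'})>0$ forces $\beta(E_{U''})=1$. Apply this with $U'$ a small basic neighbourhood of a point of $Z_y\cap U_i$ and $U''=U_i$. This gives $\beta(E_{U_i})=1$ for every basic $U_i$ meeting $Z_y$, hence $Z_y\subseteq A(b)$ almost surely; the inclusion $A(b)\subseteq Z_y$ almost surely is immediate.

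\emph{Invariance.} The regularization does not literally show that $\beta(hA(b)\cap U\neq\emptyset)$ is independent of $h$; it only shows this for its mollification. There are two ways to conclude. You can rerun the fattening with $hZ_y$ in place of $Z_y$ (using $\psi(h)=\psi(\id)$) to get $hZ_y\subseteq Z_y$. Alternatively, use $A(b)=b_1A(\sigma b)$ to get $b_1Z_y=Z_y$ for $\mu$-almost every $b_1$. The stabilizer of $Z_y$ is closed, so it then contains the closed subgroup generated by $\supp\mu$, which is $H$ because a Liouville measure is automatically adapted.
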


\noindent\emph{Remark}. In general, $Z_{y}$ might not contain $y$. For example, take $H=\R$ acting on the closed Poincar\'e disk via $t.x=g_{t}.x$ where $g_{t}$ is the geodesic flow. Let $\mu= \leb_{[1,2]}$. Then every point has bounded $H$-orbit (as the closed disk is compact), and $b_{1}\dots b_{n}y\to y_{\infty}=:\lim_{+\infty}g_{t}y$ almost-surely. 

 \begin{proof}
 Let $U$ be an open ball in $Y$. Given $h\in H$, set $f_{U}(h)$ to be the $\mu^{\otimes \N^*}$-probability that for infinitely many $n \geq1$, we have $hb_{1}\dots b_{n}y\in U$. Observe that $f_{U}$ is a bounded $\mu$-harmonic function on $H$. As $(H, \mu)$ is Liouville, $f_{U}$  has to be Haar-a.e. constant, say $f_{U}=c_{U}$ almost everywhere. 
 In fact $c_{U}\in \{0, 1\}$.  To see why, assume $c_{U}>0$. We have established that for Haar-almost every $h\in H$, there exists $N_{h}\in \N$ such that for   $b=(b_{i})_{i\geq 1}$ varying according to $\mu^{\otimes \N^*}$, we have with probability at least $c_{U}/2$ that  the sequence $(h b_{1}\dots b_{n}y)_{n\geq 1}$ meets $U$ for some $1\leq n\leq N_{h}$. This allows us to argue conditionally to previous steps of the (right)  $\mu$-random walk on $H$, and finally get $c_{U}=1$. 
 
 Now set 
 $$Z_{y}:= {\{ z\in Y\,:\, c_{U}=1  \text{ for every open ball $U$ containing $z$}  \}}.$$
 Equivalently, $Z_{y}$ is the complement of the union of the open sets $U$ such that $c_{U}=0$, in particular, $Z_{y}$ is closed and included in $\overline{Hy}$. Moreover, $Z_{y}$ is $H$-invariant due to the relation $f_{h_{1}U}(h_{2})= f_{U}(h_{1}^{-1}h_{2})$   ($h_{1},h_{2}\in H$) which implies $c_{U}=c_{h_{1}U}$. 
 
 It remains to show that for $\mu^{\otimes \N^*}$-almost every $b\in H^{\N^*}$, the accumulation set of $(b_{1}\dots b_{n}y)_{n\geq1}$ is $Z_{y}$. To see this, consider  $(U_{k})_{k\geq 1}$ a countable basis of $\overline{Hy}$. Fix $h_{0}\in H$ such that $f_{U_{k}}(h_{0})=c_{U_{k}}$ for all $k\geq 1$. By the preceding paragraph,  for $\mu^{\otimes \N^*}$-almost every $b$,  for all $k\geq 1$, the sequence $(h_{0}b_{1}\dots b_{n}y)_{n\geq1}$ meets $U_{k}$ infinitely often if $c_{U_{k}}=1$, finitely often otherwise. For such $b$, this yields $Z_{y}$ is the accumulation set of $(h_{0}b_{1}\dots b_{n}y)_{n\geq1}$. By $H$-invariance of $Z_{y}$, it is also the accumulation set of $(b_{1}\dots b_{n}y)_{n\geq1}$. This finishes the proof.

 \end{proof}

In order to apply \Cref{accumulation}, we need the measure $\mu$ to be Liouville on $H$.  This assumption seems counterproductive because the goal of the section is to establish that probability measures are Liouville on certain groups. In fact, we will still be able to apply it, not directly to the group $G$ under study, but to a quotient group, which is allowed up to arguing by induction on the dimension. For that, we need the next lemma, which asserts that (certain) points in $G$ have a centralizer of positive dimension.

\begin{lemma}[Big centralizer] \label{centralizer-bnd}
Let $G$ be a connected Lie group. Let $N$ be its nilpotent radical, $T_{N}$ the largest torus in $N$, and $s\geq 1$ the step of $N$. Then $T_{N}$ is central in $G$. If $T_{N}=\{\id\}$ then every  $x\in G$ such that $\Ad(N^{[s]})x$ is bounded must commute with $N^{[s]}$.
\end{lemma}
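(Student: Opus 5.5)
The statement has two parts, and I would prove them in order.

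\textbf{Step 1: $T_N$ is central in $G$.}
By \Cref{max-comp-in-nilp}, $T_N$ is the largest compact subgroup of $N$. It is therefore characteristic in $N$, hence normal in $G$ because $N$ is normal. It is also a connected compact abelian group, i.e. a torus. Conjugation gives a continuous homomorphism $G\to \mathrm{Aut}(T_N)$. By \Cref{autom-K}, $\mathrm{Aut}(T_N)$ is discrete, and $G$ is connected, so this homomorphism is trivial. Hence $T_N$ is central.

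\textbf{Step 2: setup when $T_N=\{\id\}$.}
Now $N$ is simply connected nilpotent by \Cref{max-comp-in-nilp}, so \Cref{fact-nilp-spc} identifies $N$ with its Lie algebra $\kn$. The last term $A:=N^{[s]}$ is central in $N$ and corresponds to the vector subspace $\kn^{[s]}$. It is characteristic in $N$, so it is normal in $G$, and it is a closed subgroup isomorphic to a real vector space. Fix $x\in G$ with $\Ad(N^{[s]})x$ bounded, and consider
\[
c:A\to G,\qquad c(a)=axa^{-1}x^{-1}.
\]
Since $A$ is normal, $c(a)=a\cdot \Ad(x)(a)^{-1}\in A$, and $\Ad(x)$ restricts to a linear automorphism $L$ of the vector space $A$. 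In additive notation on $A$ (which is abelian), $c(a)=(\id-L)a$, a linear map.

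\textbf{Step 3: conclude.}
Boundedness of $\{axa^{-1}:a\in A\}$ means that $\{c(a):a\in A\}=\{axa^{-1}x^{-1}\}$ is bounded in $G$, since right multiplication by $x^{-1}$ is a homeomorphism. Hence $(\id-L)(A)$ is a bounded subset of $A$. Because $A$ is closed in $G$, its subspace topology is the vector space topology, so this image is a bounded linear subspace of a real vector space. Such a subspace must be $\{0\}$. Thus $L=\id$, i.e. $x$ commutes with $N^{[s]}$.

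\textbf{Main obstacle.}
The only delicate point is justifying that $N^{[s]}$ is closed in $G$ with the vector-space topology, so that "bounded in $G$" can be read inside $A$. I would argue this as follows. Simply connected nilpotent $N$ is closed in $G$; the nilpotent radical is closed, since its closure is again connected, normal and nilpotent. Connected subgroups of a simply connected nilpotent group are closed, being images of subspaces under the diffeomorphism $\exp$. So $A$ is closed in $N$, hence in $G$, and a subset of $A$ that is relatively compact in $G$ is relatively compact in $A$.
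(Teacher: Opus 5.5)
Your proof is correct and follows essentially the same route as the paper: centrality of $T_N$ from normality, discreteness of $\mathrm{Aut}(T_N)$ and connectedness of $G$; then viewing $N^{[s]}$ as a vector space on which $n\mapsto nxn^{-1}x^{-1}$ is the linear map $\id-\Ad(x)$, which is bounded and hence zero. Your argument that $N^{[s]}$ is closed in $G$ (closedness of the nilpotent radical, and of $\exp(\kn^{[s]})$ in $N$) spells out the properness of $N^{[s]}\hookrightarrow G$, which the paper uses without comment. The one point you assert without justification is the linearity of $\Ad(x)$ on $N^{[s]}$; the paper obtains it from continuity together with additivity.
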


\begin{proof}
Note $T_{N}$ is normal in $G$. On the other hand, $G$ is connected and the group of automorphisms of a torus is discrete. Therefore $\Ad(G)\acts T_{N}$ is trivial, i.e. $T_{N}$ is central.

Let us check the second claim.  As $T_{N}=\{\id\}$, the group $N$ is simply connected (\Cref{max-comp-in-nilp}). Therefore the abelian subgroup $N^{[s]}$ can be seen as a vector space. Note $N^{[s]}$ is also invariant under automorphisms of $G$, and in particular under $\Ad(x)$. The action $\Ad(x)\acts N^{[s]}$ is linear (because it is continuous and commutes with integer multiplication). By assumption the subset $\{n x n^{-1} x^{-1} \}_{n\in N^{[s]}} \subseteq N^{[s]}$ is bounded in $G$. As the inclusion map $N^{[s]}\hookrightarrow G$ is proper, it is also bounded in $N^{[s]}$, and recalling the group $N^{[s]}$ is a vector space, we get
$$\sup_{n \in N^{[s]}} \|(\id_{N^{[s]}}-\Ad(x))(n)\|<\infty.$$
 A vector space endomorphism which is uniformly bounded must be zero, so $x$ commutes with $N^{[s]}$. 
\end{proof}

%\noindent\emph{Remark}. One may ask whether the assumption $T=\{\id\}$ in the second claim is necessary. Note that an automorphism of $T\times V$ is of the form $\psi(v,z)=(l(v), c(v)+a(z))$ where $l\in \Aut(V)$, $a\in \Aut(T)$ and $:V\rightarrow T$ is a morphism (and conversely). It satisfies that  $\id-\psi$ is uniformly bounded if and only if $l=\id$. So we would need to check that $c=0$ in the context $\psi=\Ad(x)$ to obtain that $x$ commutes with $N^{[s]}$ regardless of whether $T$ is trivial or not.

We now work to guarantee that the extra-invariance provided by  \Cref{extra-inv}, whenever applicable,  is not trivial. We start with a preparatory lemma. 
 
\begin{lemma} \label{G/R-compact}
Let $G$ be as in \Cref{CD-sufficient-Liegrp}, let $R$ be the radical of $G$.  The semisimple group $G/R$  is compact. 
\end{lemma}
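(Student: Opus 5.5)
Set $S=G/R$, a connected semisimple Lie group, and let $\pi:G\to S$ be the projection. The hypothesis of \Cref{CD-sufficient-Liegrp} passes to $S$: since $\pi([G,G])=[S,S]$ and $\pi$ is continuous, for every $y\in[S,S]$ the orbit $\Ad(S)y$ is the image of a bounded set, hence bounded. The plan is to show that a connected semisimple Lie group $S$ in which every element of $[S,S]$ has bounded conjugacy class is compact. A semisimple Lie algebra is perfect, so $[S,S]$ is a connected subgroup with Lie algebra $[\ks,\ks]_{\ks}=\ks$, i.e. $[S,S]=S$. Thus every element of $S$ has a bounded conjugacy class.

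Decompose $\ks=\ks_c\oplus\ks_{nc}$, where $\ks_c$ is the sum of the compact simple ideals and $\ks_{nc}$ the sum of the non-compact ones. We argue by contradiction, assuming $\ks_{nc}\neq 0$, and work in the adjoint group $\Ad(S)\subseteq \mathrm{GL}(\ks)$. The map $\Ad$ is continuous, so the conjugacy classes of $\Ad(S)$ are images of conjugacy classes of $S$; the latter are relatively compact in $S$, so the former are relatively compact in $\mathrm{GL}(\ks)$. Now pick a non-compact simple ideal $\ks_1\subseteq\ks_{nc}$. It contains a nonzero $\R$-split element $X$, namely one with $\ad X$ diagonalizable over $\R$ with a nonzero real eigenvalue (use a Cartan decomposition $\ks_1=\kk_1\oplus\mathfrak p_1$ and take $X\in\mathfrak p_1\setminus\{0\}$). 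It also contains a nonzero $Y$ with $[X,Y]_{\ks}=\lambda Y$ for some $\lambda>0$. Let $a_t=\exp(tX)$ and $y=\exp(Y)$. Then
\[
\Ad(a_t)\,y=\exp\bigl(e^{t\lambda}Y\bigr),
\]
and since $\ad Y$ is nilpotent and nonzero, $\Ad(\exp(sY))=e^{s\,\ad Y}$ is unbounded in $\mathrm{GL}(\ks)$ as $s\to\infty$. Hence the conjugacy class of $\Ad(y)$ in $\Ad(S)$ is unbounded, a contradiction. Therefore $\ks=\ks_c$ is of compact type.

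It remains to pass from ``$\ks$ is compact semisimple'' to ``$S$ is compact''. The universal cover $\tilde S$ of $S$ is compact by \Cref{Weyl}: the adjoint group of $\ks$ is compact and semisimple, so it has finite fundamental group and its universal cover is compact. Since $S$ is a quotient of $\tilde S$, it is compact as well.

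The main obstacle I expect is bookkeeping rather than substance. First, boundedness in $S$ must transfer to $\Ad(S)$; this is immediate, because $\Ad$ is continuous and images of relatively compact sets are relatively compact. Second, one needs the existence of the split element $X$ and the root vector $Y$ in every non-compact simple real Lie algebra, which is standard via the Cartan decomposition and restricted roots. The remaining steps are routine.
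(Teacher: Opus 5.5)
Your proof is correct and follows essentially the paper's route: pass to the center-free quotient (your $\Ad(S)\subseteq \mathrm{GL}(\ks)$ plays the role of the paper's $L=H/Z(H)$), use perfectness to get bounded conjugacy classes for all elements, push a unipotent element to infinity by conjugating with a split element, and use Weyl's theorem to carry compactness through the covering. The differences are cosmetic: you use a single restricted-root vector and measure unboundedness directly in $\mathrm{GL}(\ks)$, which avoids the properness of $U\hookrightarrow L$ that the paper needs with the Iwasawa decomposition $L=KAU$, and you apply Weyl to the universal cover rather than to the covering $H\to L$.
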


\begin{proof}
Set $H:=G/R$ and $L=H/Z(H)$. As $L$ is semisimple and connected, we have that $[L,L]=L$, therefore every element in $L$ has bounded conjugacy class. Assume by contradiction that $H$ is not compact. The projection map $H\rightarrow L$ is a covering, therefore by  Weyl's theorem (\Cref{Weyl}), the group $L$ is not compact either. 
Let $L=KAU$ be an Iwasawa decomposition for $L$. As $L$ is semisimple and center free, non-compactness implies $A,U\neq \{\id\}$. Choosing $a$ in the Weyl chamber dilating $U$, we have for any $u\in U\smallsetminus \{\id\}$ that $\Ad(a^n)u\to \infty$  in $U$ as $n$ tends to $+\infty$. The inclusion $U\hookrightarrow L$ being proper, this shows $u$ has unbounded conjugacy class. Contradiction.
\end{proof}

\begin{lemma} \label{convid} 
Let $G$ be as in \Cref{CD-sufficient-Liegrp}. Let $x\in [G,G]$. Let $\mu$ be a probability measure on $G$.  Assume that for $\mu^{\otimes \N^*}$-almost every $(b_{i})_{i\geq1}$, one has 
$$\Ad(b_{1}\dots b_{n})x\rightarrow \id. $$
Then $x=\id$.
\end{lemma}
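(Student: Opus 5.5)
The plan is to produce a metric that conjugation by $G$ preserves, at least on the relevant part of $G$. If such a metric $d$ exists, the distance $d(\Ad(g)x,\id)=d(x,\id)$ does not depend on $g$. The hypothesis then gives, along a single $\beta$-typical $b$, that $d(x,\id)=\lim_n d(\Ad(b_1\dots b_n)x,\id)=0$, so $x=\id$. Note that only the existence of one sequence $g_n\in G$ with $\Ad(g_n)x\to\id$ is used. So the statement reduces to: \emph{under the hypothesis of \Cref{CD-sufficient-Liegrp}, no nontrivial $x\in[G,G]$ has $\id$ in the closure of its conjugacy class.}

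\textbf{Step 1: reduce to the radical.} By \Cref{G/R-compact}, $G/R$ is compact. By \Cref{Levi-G/R-compact}, $G=KR$ with $K$ compact. Replace $g_n=k_nr_n$ by $r_n$ after passing to a subsequence with $k_n\to k$. Then $\Ad(r_n)x\to\Ad(k^{-1})\id=\id$. So we may assume $g_n\in R$, and the problem is about $\Ad(R)$ acting on $C:=\overline{\Ad(G)x}$, which is compact by hypothesis.

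\textbf{Step 2: an invariant structure.} Let $L$ be the closed subgroup generated by $\overline{\Ad(G)[G,G]}$. It is normal and invariant under $\Ad(G)$, and every element of $[G,G]$ has a bounded $\Ad(G)$-orbit. The goal is to show that the image of $G$ in $\mathrm{Aut}(L)$ is relatively compact.
\begin{enumerate}[label=(\alph*)]
\item On the identity component $L^\circ$, pick finitely many elements $y_1,\dots,y_m$ of bounded class near $\id$ whose logarithms span $\Lie(L^\circ)$.
\item Boundedness of each orbit $\Ad(G)y_i$ bounds $\Ad(g)$ on a basis of $\Lie(L^\circ)$, uniformly in $g$. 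This is the same linear-algebra mechanism as in \Cref{centralizer-bnd}.
\item Hence $\Ad(G)|_{\Lie(L^\circ)}$ is relatively compact in $\mathrm{GL}(\Lie(L^\circ))$, and averaging gives an $\Ad(G)$-invariant inner product.
\item This yields an $\Ad(G)$-invariant left-invariant Riemannian metric on $L^\circ$, which is also bi-invariant, since $\Ad(L^\circ)\subseteq\Ad(G)$.
\end{enumerate}

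\textbf{Step 3: conclude.} Since $\Ad(g_n)x\to\id$, eventually $\Ad(g_n)x\in L^\circ$, and $L^\circ$ is normal. Hence $x\in L^\circ$, and the invariant distance to $\id$ forces $x=\id$.

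The main obstacle is Step 2(a)–(b). Bounded orbits in $G$ must be turned into bounded orbits in $L^\circ$ with its own topology, which needs properness of $L^\circ\hookrightarrow G$ and holds because $L$ is closed. One must also pass from group elements to Lie algebra vectors. Boundedness of the $\Ad(G)$-orbit of $y=\exp(v)$ does not immediately bound the orbit of $v$ when $\exp$ is not injective, for instance on torus factors. There I would first quotient by the maximal torus $T_N$, which is central by \Cref{centralizer-bnd} and acted on trivially. I would then argue with one-parameter subgroups $\exp(tv)$, using that $\Ad(G)\exp(tv)$ is bounded for all $t$ in a generating set, to control $\Ad(g)v$ linearly.
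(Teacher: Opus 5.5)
\textbf{There is a genuine gap in Step~2(b).} Your reduction to ``no nontrivial $x\in[G,G]$ has $\id$ in $\overline{\Ad(G)x}$'' is fine; the paper's argument also works for an arbitrary sequence $g_n$. Step~3 would indeed follow from Step~2. The problem is Step~2(b): passing from bounded conjugacy classes of \emph{group elements} to a bounded \emph{linear} action on $\Lie(L)$. This is the entire content of the lemma, and as you formulate it the goal of Step~2 is false.

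In $G_2=\SO_2\ltimes(\Heis/\Z e_3)$ from \Cref{ex-SO2Heis}, every element of $[G_2,G_2]=\Heis/\Z e_3$ has bounded class. Yet conjugation by $(v,s)$ acts on $\Lie(\Heis)=\R^2\oplus\R e_3$ by $(w,\tau)\mapsto(w,\tau+2\,v\wedge w)$, which is an unbounded family.

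You anticipate this and propose to quotient by $T_N$ first. That reduction is legitimate: if the image of $x$ is trivial, then $x\in T_N$ is central, so $\Ad(g_n)x=x\to\id$. But you do not give the argument after the quotient, and the mechanism you sketch cannot supply it. For $v$ in a compact Levi factor $\kk$, the curve $t\mapsto\exp(t\Ad(g)v)=g\exp(tv)g^{-1}$ stays in the compact group $gKg^{-1}$ for every $g$. Since $\exp$ is not proper in such directions, boundedness of these one-parameter subgroups does not bound $\Ad(g)v$, and converting one into the other is exactly what must be proved. There is a secondary problem too: if $[G,G]$ is not closed, $\exp(t\log y_i)$ need not lie in $[G,G]$, so the hypothesis need not apply to it.

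\textbf{What is missing, and how the paper avoids it.} The missing ingredient is structural, and it comes from varying the \emph{conjugating} element rather than the conjugated one. After quotienting by $T_N$, $N$ is simply connected. For $y\in[G,G]$ and $X\in\kn$, the curve $s\mapsto e^{sX}ye^{-sX}y^{-1}$ is polynomial in $N\cong\kn$ and bounded, hence constant. So $[G,G]$ centralizes $N$; this is the argument in the proof of \Cref{cpct-by-nilp-cor}. Since $K\subseteq[G,G]$, $[\kg,\kr]\subseteq\kn$, and $\kr=\kr^{\kk}\oplus[\kk,\kr]$, you get $[\kk,\kr]=[\kk,[\kk,\kr]]\subseteq[\kk,\kn]=0$. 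Hence $[\kg,\kg]=\kk\oplus[\kr,\kr]$, with $[\kr,\kr]\subseteq\kn$ abelian. Then $\Ad(G)$ acts on $\kk$ through $\Ad(K)$ and on $[\kr,\kr]$ with bounded orbits, $[G,G]=K[R,R]$ is closed, and your invariant-metric argument goes through.

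The paper sidesteps all of this. Instead of a global invariant metric, it descends the filtration $G\supseteq R\supseteq R'\supseteq\overline{[R,R]}\supseteq\cdots$ and uses three facts:
\begin{itemize}
\item $\Ad(G)$ acts on $G/R$ by inner automorphisms of a compact group;
\item $\Ad(G)$ acts isometrically on the span $V_x$ of the orbit of $x$ in each vector-space layer, where group elements are vectors, so your $\exp$ problem never arises;
\item $\Ad(G)$ acts trivially on each torus layer.
\end{itemize}
Beyond compactness of $G/R$, it uses only boundedness of the single orbit $\Ad(G)x$.
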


%\noindent\emph{Remark}. 
%The proof works under the weaker assumption that  the simple factors of $G/R$ are compact, $x\in G$ has bounded $\Ad(G)$-orbit, and  

\begin{proof}
Let $R$ be the radical of $G$. We first check that $x\in R$.  Set $H:=G/R$. 
Note that $\Ad(G)\acts H$ by automorphisms. Note $H$ is a compact connected Lie group  (\Cref{G/R-compact}) and $G$ is connected, therefore every element of $G$ acts by conjugation by an element of $H$ (\Cref{autom-K}).  Therefore, for $\beta$-almost every $b\in B$, the sequence of automorphisms $\Ad(b_{1}\dots b_{n}) \acts H$ has a subsequence converging to the identity. From the assumption on $x$, we deduce $x\in  R$.

Let $R'\subseteq R$ such that $R/R'$ is the biggest vector space  quotient of $R$, see \Cref{vs-radical}. We  show $x\in R'$. Let $V_{x}\subseteq R/R'$ be the subspace generated by $\Ad(G)x \mod R'$. By hypothesis, $\Ad(G)$ acts on  $V_{x}$ with bounded orbits, whence by isometries. One can then argue as in the previous paragraph to get that $x \mod R'$ is a limit point of $\Ad(b_{1} \dots b_{n})(x \mod R')$ in $V_{x}$, whence $x\in R'$ due to the assumption on $x$. 

We deduce that $x\in \overline{[R,R]}$. Indeed, $R'/ \overline{[R,R]}$ is a torus so $\Ad(G)\acts R'/ \overline{[R,R]}$ is trivial by connectedness of $G$ and \Cref{autom-K}. The assumption on $x$ then  implies  $x\in \overline{[R,R]}$. 

We can repeat this argument with $\overline{[R,R]}$ in the place of $R$ and reach $x=\id$ in a finite number of steps. Note here we do not consider the successive derived subgroups of $R$, but  slightly bigger groups, obtained by iterating the map $\Phi : S\mapsto \overline{[S,S]}$. The sequence  reaches $\{\id\}$ in finite time because $R$ is solvable and $\Phi^k(R)= \overline{{^{[k+1]}R}}$ for all $k$ (where $({^{[k]}R})_{k}$ denotes the derived series of $R$).

\end{proof}

We may now conclude the proof of the Choquet-Deny property for connected Lie groups such that all commutators have bounded conjugacy class.

\begin{proof}[Proof of \Cref{CD-sufficient-Liegrp}] 
We argue by induction on the dimension of $G$. The result is clear when $\dim G=1$ because $G$ is then abelian, a case we already dealt with in \Cref{CD-abelian-case}. We consider $\mu$ a probability measure on $G$ whose support generates a dense subgroup. We let $f$ be a left uniformly continuous bounded $\mu$-harmonic function. \emph{We assume by contradiction $f$ non constant.}

 Let $u,v\in G$. Let $C_{u,v}\subseteq G$ be the largest normal closed subgroup which centralizes $[u,v]$.  We claim that $G/C_{u,v}$ is Choquet-Deny. To see that, let $N$ be the nilpotent radical of $G$. If $N=\{\id\}$, then the  radical $R$ of $G$ is trivial as well, so $G$ is compact by \Cref{G/R-compact}. As $f$ is non-constant, we have a contradiction with the Choquet-Deny property for compact Lie groups (\Cref{CD-compact-case}). Hence $N\neq\{\id\}$. One may then apply  \Cref{centralizer-bnd} and the boundedness of $\Ad(G)[u,v]$, to see that $\dim C_{u,v}>0$, which by induction  hypothesis, implies that $G/C_{u,v}$ is Choquet-Deny. We have thus established the claim. 
 
We now exhibit a normal closed subgroup $I\subseteq G$ such that $f$ is $I$-invariant. The fact that $G/C_{u,v}$ is Choquet-Deny allows us to apply \Cref{accumulation} with $H=G/C_{u,v}$, and the basepoint $[u,v]$. We write $Z_{[u,v]}\subseteq \overline{\Ad(G)[u,v]}$ the associated accumulation set. We deduce from \Cref{extra-inv} that there exists a set $E\subseteq G^2$ of full $\mu^{\otimes 2}$-measure such that $f$ is right-invariant under  the closed subgroup generated by $\cup \{Z_{[u,v]}\,:\,(u,v)\in E\}$. We denote this subgroup by $I$. Note that $I$ is  normal in $G$ because \Cref{accumulation} guarantees that each $Z_{[u,v]}$ is $\Ad(G)$-invariant. 

We check that $I$ is discrete central and  $I\neq \{\id\}$.  If $\dim I>0$, then seeing $f$ as a function on $ G/I$, the induction hypothesis yields that $f$ is constant, contradiction. So necessarily $\dim I=0$, i.e.   $I$ is a discrete, normal (whence central) subgroup. 
 Assume by contradiction $I=\{\id\}$. By \Cref{convid}, we have $[u,v]=\id$ for all $(u,v)\in E$, so $G$ must be abelian, in which case $f$ is constant because abelian groups are Choquet-Deny (\Cref{CD-abelian-case}). This contradicts our assumption, hence $I\neq \{\id\}$.

Seeing $f$ as a harmonic function on $I\backslash G$, and  repeating the same procedure,  we deduce that $f$ is invariant under a strictly increasing sequence of normal discrete subgroups $(I_{n})$. Note they are all central, because $G$ is connected so it can only act trivially on discrete sets.  Set $I_{\infty}:=\overline{\cup I_{n}}$. By \Cref{discrete-central-fg} below, we must have $ \dim I_{\infty}>0$.   We can see $f$  as a harmonic function on $G/I_{\infty}$. By induction hypothesis, $f$ must be constant. Contradiction. 
\end{proof}

The next lemma was used in the previous proof. 

\begin{lemma}\label{discrete-central-fg} 
Let $G$ be a connected Lie group and let $D$ be a discrete central subgroup of $G$.
Then $D$ is a finitely generated abelian group. Consequently, every increasing
sequence of subgroups of $D$ stabilizes.
\end{lemma}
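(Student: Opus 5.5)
The plan is to embed $D$ into a finitely generated abelian group. Let $\pi:\tG\to G$ be the universal cover. Since $\pi$ is a covering homomorphism, $\Ker\pi$ is a discrete normal subgroup of the connected group $\tG$. It is therefore central, and it is isomorphic to $\pi_1(G)$, which is finitely generated because $G$ is a connected manifold (for a Lie group one can even take it to be a quotient of $\pi_1$ of a maximal compact subgroup).

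Set $\tilde D:=\pi^{-1}(D)$. It is a discrete subgroup of $\tG$, and it is central. Indeed, for $d\in\tilde D$ the map $\tG\to\tG$, $g\mapsto gdg^{-1}d^{-1}$, is continuous and takes values in $\Ker\pi$, because $D$ is central in $G$. Since $\tG$ is connected and $\Ker\pi$ is discrete, this map is constant, equal to $\id$.

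So it suffices to show that a discrete central subgroup of a \emph{simply connected} Lie group $\tG$ is finitely generated. This is the main step. Consider the identity component $Z_0$ of the centre $Z(\tG)$; it is a closed connected abelian Lie group. The key structural input is the standard fact that the centre of a simply connected connected Lie group has finitely many components modulo its identity component, in the sense that $Z(\tG)/Z_0$ is finitely generated. To see this, take a Levi decomposition $\tG=S\ltimes\tilde R$ with $S$ simply connected semisimple and $\tilde R$ simply connected solvable. The centre of $S$ is finitely generated, since it lies in the preimage of the centre of the linear group $\Ad(S)$, and the fundamental group of $\Ad(S)$ is finitely generated. Every element of $Z(\tG)$ projects into $Z(\tG/\tilde R)=Z(S)$, and the kernel of this projection is $Z(\tG)\cap \tilde R$. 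This is a closed subgroup of the simply connected solvable group $\tilde R$ centralised by $\tG$, so it has finitely many components over its identity component: closed abelian subgroups of simply connected solvable groups are of the form (vector group)$\times\Z^k$. Hence $Z(\tG)/Z_0$ is finitely generated. This is the step I expect to be the most delicate, and if needed I would instead cite the standard result \cite{GOV94} that $Z(\tG)$ is a finitely generated extension of its identity component.

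Next, $Z_0$ is a connected abelian Lie group in a simply connected group. It is isomorphic to $\R^a\times \mathbb{T}^b$, so any discrete subgroup of it is a lattice-type subgroup of rank at most $a+b$, hence finitely generated. Since $\tilde D\cap Z_0$ is discrete in $Z_0$, it is finitely generated, and $\tilde D/(\tilde D\cap Z_0)$ embeds in the finitely generated abelian group $Z(\tG)/Z_0$. Thus $\tilde D$ is a finitely generated abelian group, and so is its quotient $D=\pi(\tilde D)$.

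The final claim is then immediate. A finitely generated abelian group is Noetherian, because every subgroup of $\Z^k\times$(finite) is finitely generated. Therefore any increasing sequence of subgroups stabilises, since its union is finitely generated and hence already attained at a finite stage.
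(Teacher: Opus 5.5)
Your argument is correct, but it takes a long detour compared with the paper. The paper's proof is one line: $G\to G/D$ is a covering of connected Lie groups, so the homotopy exact sequence gives a surjection $\pi_1(G/D)\to D$. Fundamental groups of connected Lie groups are finitely generated abelian, and the lemma follows.

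You instead lift to $\tilde G$, check that $\tilde D=\pi^{-1}(D)$ is discrete and central, and then prove the stronger fact that $Z(\tilde G)/Z(\tilde G)_0$ is finitely generated, using a Levi decomposition. That route needs the same input as the paper, namely finite generation of $\pi_1(\Ad(S))\cong Z(S)$. It also needs an additional structural result that you state without proof: closed abelian subgroups of simply connected solvable Lie groups are of the form $\R^a\times\Z^k$. (You mean a finitely generated component group, not finitely many components.) This is true but nontrivial, and heavier than the lemma itself.

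The whole middle part can be skipped using what you already wrote. $\tilde D=\pi^{-1}(D)$ is precisely the kernel of the universal covering $\tilde G\to G\to G/D$. Hence $\tilde D\cong\pi_1(G/D)$ is finitely generated by the very fact you quote in your first paragraph, and $D=\pi(\tilde D)$. This is exactly the paper's argument. Similarly, since $\tilde G/Z(\tilde G)_0$ is simply connected, $Z(\tilde G)/Z(\tilde G)_0\cong\pi_1(\tilde G/Z(\tilde G))$, with no Levi decomposition needed. Your approach gives some extra information about the centre of $\tilde G$, but nothing the lemma requires.

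One small correction: finite generation of $\pi_1(G)$ does not follow from $G$ being a connected manifold. For example, $\R^2\smallsetminus\Z^2$ has infinitely generated fundamental group. It comes instead from $G$ deformation retracting onto a maximal compact subgroup, which is a compact manifold. You never actually use finite generation of $\Ker\pi$ anyway.
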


\begin{proof}
Since $D$ is discrete and central, the quotient map
\[
G \longrightarrow G/D
\]
is a covering map of connected Lie groups. The associated homotopy exact
sequence yields a surjection
\[
\pi_1(G/D)\longrightarrow D.
\]
The fundamental group of a connected Lie group is finitely generated abelian; see, for instance,
\cite[Theorem~14.3.11]{HilgertNeeb2012} and
\cite[Chapter~23]{Bump2013}.
Hence $D$, being a quotient of $\pi_1(G/D)$, is finitely generated abelian.

The last assertion follows because every finitely generated abelian group is
Noetherian.
\end{proof}

\section{An escape property }\label{SEP-section}

We show that  Lie groups for which there exists a commutator with unbounded conjugacy class satisfy in fact  a much stronger escape property. We introduce this property below as $\SEP$. In the next section, it  will play a key role to show  that such groups are not Choquet-Deny. 

\bigskip

Given \(m\geq1\), let \(\Omega_m\) be the collection of tuples
\[
 \omega=((w_1,\varepsilon_1),\ldots,(w_r,\varepsilon_r)),
\]
where $r\in \N$, where  $w_1,\dots, w_r$ are pairwise distinct elements of $\{1,\ldots,m\}$, and $\varepsilon_1, \dots, \eps_{r} \in\{-1,1\}$. The case $r=0$ corresponds to the empty tuple $\omega=\emptyset$. 

For \(\ux=(x_1,\ldots,x_m)\in G^m\), and $\omega,\omega'\in \Omega_{m}$, set
\[
 \ux_\omega=x_{w_1}^{\varepsilon_1}\cdots x_{w_r}^{\varepsilon_r},
 \qquad
 \ux_{\omega,\omega'}=\ux_\omega\ux_{\omega'}^{-1}.
\]
For the empty tuple, the convention is $\ux_{\emptyset}=\id$.

\begin{definition}\label{def-SEP} 
We say that a tuple \(\ux\in G^m\) satisfies the \emph{simultaneous escape
property} \(\SEP\) if there exists a sequence \((\sigma_k)_{k\geq0} \in G^\N\) such
that for every $\omega\neq \omega'\in \Omega_{m}$, and every $(\eps, \eps')\in\{-1,1\}^2\smallsetminus \{(-1,1)\}$, we have 
\begin{align*} 
 \sigma_k^{\eps}  \ux_{\omega,\omega'} \sigma_k^{\eps'}&\longrightarrow \infty \quad\quad\quad \quad \sigma_k^{-1}  \ux_{\omega,\omega'} \sigma_k \longrightarrow \id \text{ or }  \infty.
\end{align*}
 We say
that \(G\) satisfies \(\SEP\) if, for every \(m\geq 1\), the 
property \(\SEP\) holds for  a  dense subset of tuples $\ux\in G^m$.
\end{definition}

 In the above, given a sequence $(y_{k})_{k\geq 0}$ in $G$, the notation 
$ y_k\to \infty$  means that  \((y_k)_{k\geq0}\) eventually leaves every compact subset of $G$. We will refer to $(\sigma_{k})_{k\geq0}$ as a ``switching sequence'' for $\ux$.

\bigskip
\noindent\emph{Remark}.
If $G$ satisfies $\SEP$ then we may further require switching sequences $\sigma_{k}$ to satisfy $\sigma_{k}^2\to\infty$. Indeed, for every $m\geq 1$, consider $(x_{1}, \dots, x_{m}, t) \in G^{m+1}$ satisfying $\SEP$, with some switching sequence $(\sigma_{k})_{k}$. Then it is straightforward to check that the new sequence $\tau_{k}=\sigma_{k}t$ switches $(x_{1}, \dots, x_{m})$ and furthermore satisfies $\tau_{k}^2\to \infty$.

\bigskip

Our goal for the section is to show that unbounded conjugacy class for a commutator can be upgraded to  $\SEP$.

\begin{proposition}[Simultaneous escape] \label{prop-sim-switch}
Let $G$ be a  connected Lie group. The next conditions are equivalent:
\begin{itemize}
\item[a)] there exists $x\in [G,G]$ with $\Ad(G)x$ unbounded
\item[b)]  there exists $x\in \overline{[G,G]}$ with $\Ad(G)x$ unbounded
\item[c)]  a quotient of $G$ satisfies $\SEP$.
\end{itemize}
\end{proposition}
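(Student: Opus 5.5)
The plan is to prove the cycle a) $\Rightarrow$ b) $\Rightarrow$ c) $\Rightarrow$ a). The implication a) $\Rightarrow$ b) is trivial.

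\textbf{c) $\Rightarrow$ a).} Suppose a quotient $Q=G/M$ satisfies $\SEP$. I would argue by contrapositive. Assume every $x\in[G,G]$ has bounded $\Ad(G)$-orbit. Then the same holds for every element of $[Q,Q]$, since it lifts to $[G,G]$ and the projection is continuous. Take $m=2$ and a tuple $\ux=(x_1,x_2)$ with switching sequence $(\sigma_k)$. I would use the two decompositions $\omega=((1,1),(2,1))$ and $\omega'=((2,1),(1,1))$. Then $\ux_{\omega,\omega'}=[x_1,x_2]$ lies in $[Q,Q]$, so its conjugates $\sigma_k^{-1}[x_1,x_2]\sigma_k$ stay bounded. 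By $\SEP$ they must therefore converge to $\id$.

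The next step is to vary the tuple. Since $\SEP$ holds on a dense set of pairs, every $\mu$-typical commutator is then a limit of conjugates tending to $\id$. By boundedness, the conjugation action on the relevant closed invariant subsets is close to isometric: along a subsequence, $\Ad(\sigma_k^{-1})$ converges on the compact set $\overline{\Ad(Q)[x_1,x_2]}$, and the limit map is injective by the argument of \Cref{convid}, which peels off $Q/R$ (compact by \Cref{G/R-compact}), then vector space quotients, then tori. Hence $[x_1,x_2]=\id$ on a dense set of pairs, so $Q$ is abelian.

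Finally I would rule out the abelian case directly. Taking $\omega=((1,1))$ and $\omega'=\emptyset$ in an abelian group gives $\sigma_k^{-1}x_1\sigma_k=x_1$, which must tend to $\id$ or $\infty$; this forces $x_1=\id$ on a dense set, so $Q$ is trivial. A trivial group cannot satisfy $\SEP$, because the condition $\sigma_k\ux_{\omega,\omega'}\sigma_k\to\infty$ fails there. This is a contradiction.

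\textbf{b) $\Rightarrow$ c).} This is the substantial direction. I would first reduce to a well-chosen quotient $Q$ in which unboundedness is clean. Take $x\in\overline{[G,G]}$ with unbounded conjugacy class, and quotient by a maximal closed normal subgroup $M$ such that some element of $\overline{[G/M,G/M]}$ still has unbounded orbit. I would then split along the Levi decomposition into cases.
\begin{enumerate}[label=(\roman*)]
\item $G/R$ is noncompact. Pass to a simple noncompact quotient $L$, center-free as in \Cref{G/R-compact}.
\item $G/R$ is compact. By \Cref{Levi-G/R-compact} we have $G=KR$, and the unboundedness lives in a solvable quotient. If the nilpotent radical acts with unbounded conjugacy, reduce to a simply connected nilpotent quotient of step at least 3 (using \Cref{max-comp-in-nilp} and \Cref{centralizer-bnd}). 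Otherwise some element acts on a vector space quotient $S/\overline{[S,S]}$, with $S=\overline{[G,G]}$, with an eigenvalue that is off the unit circle or in a nontrivial Jordan block.
\end{enumerate}
In each case I would establish $\SEP$ on the quotient.

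\emph{Nilpotent case.} Identify $Q$ with $(\kg,*)$ via \Cref{fact-nilp-spc}, so conjugation is polynomial. Take $\sigma_k=\exp(t_k y)$ with $y$ generic and $t_k\to\infty$. Each $\sigma_k^{-1}\ux_{\omega,\omega'}\sigma_k$ is then a polynomial in $t_k$, and it tends either to $\infty$ or is constant. To get $\id$ or $\infty$, I need every nonidentity $\ux_{\omega,\omega'}$ to fail to commute with $y$. For generic $\ux$ and $y$ this holds by Zariski density, since the set of bad $\ux$ is a proper algebraic subvariety. This genericity is exactly where step at least 3 enters: in step 2 the commutator $[x_1,x_2]$ is central and the condition fails. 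The two-sided products $\sigma_k^{\pm}\ux_{\omega,\omega'}\sigma_k^{\pm}$ escape because their abelianized image drifts linearly.

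\emph{Semisimple and solvable cases.} Here I would use a hyperbolic element $a$ and set $\sigma_k=a^{k}g$ with $g$ generic. Conjugates of elements in general position then escape through the expanding directions, and the $\pm$ products escape by a Cartan-decomposition norm estimate. A nontrivial Jordan block plays the role of polynomial drift.

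\textbf{Main obstacle.} I expect the hardest step to be simultaneous control over all pairs $\omega\neq\omega'$, including the sign combinations. Genericity must exclude finitely many algebraic or analytic conditions at once, and one must verify each condition is nontrivial by exhibiting a single good tuple, using real-analyticity (\Cref{analyt-struct}). The case $\sigma_k^{-1}\ux_{\omega,\omega'}\sigma_k\to\id$ must also be separated cleanly from the bounded nonconvergent case.
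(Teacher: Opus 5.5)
\textbf{Case (ii) of b)$\Rightarrow$c) is incomplete.} Your nilpotent and semisimple cases are essentially the paper's. In case (ii), however, the claim that ``the unboundedness lives in a solvable quotient'' is false, and your two subcases do not cover all situations.

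The group $\SO_3\ltimes\R^3$ is perfect, so its only solvable quotient is trivial. Yet conjugating $(k,v)$ by a translation $w$ gives $(k,v+(\id-k)w)$, so b) holds.

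Even among solvable groups, $G_1=\SO_2\ltimes\Heis$ from \Cref{ex-SO2Heis} escapes both of your subcases:
\begin{itemize}
\item it has no nilpotent quotient of step $\geq 3$;
\item its abelianization $G_1/\overline{[G_1,G_1]}\cong\SO_2$ is compact, so the linear drift in the abelianization that your nilpotent argument relies on is unavailable;
\item it acts on $S/\overline{[S,S]}\cong\R^2$ by rotations, with no eigenvalue off the unit circle and no Jordan block.
\end{itemize}

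The missing idea is that of \Cref{case-S/S'-nontriv} and \Cref{solvable-SEP1}. Take the switching sequence inside a normal subgroup $S$ (translations $s_k$, or powers $s^k$). Then impose the generic condition $\Ad(\ux_{\omega,\omega'})_{|S/S'}\neq\pm\id_{S/S'}$. This forces $(\id\pm\Ad(\ux_{\omega,\omega'}))s_k\to\infty$ even though the action is isometric.

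To reach the solvable case at all when $G/R$ is compact, you also need the paper's dichotomy. Either $\Ad([G,G])$ acts nontrivially on such a vector-space quotient, and $\SEP$ follows directly. Or \Cref{KactsN} forces the compact Levi factor $K_S$ of $\overline{[G,G]}$ to centralize $R_S$. By the uniqueness in \Cref{Levi-G/R-compact}, $K_S$ is then normal, so it can be quotiented out without losing b).

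\textbf{The switching sequence in the hyperbolic solvable case fails.} Taking $\sigma_k=a^kg$ with $g$ fixed does not work. Work in the affine group of \Cref{ex-affinegroup}, with $\sigma=(\rho,\tau)$ and $y=(c,d)$, $c\neq0$. This is the generic shape of $\ux_{\omega,\omega'}$ when $\omega_{ab}\neq\omega'_{ab}$, for instance $y=x_1$. One computes $\sigma y\sigma^{-1}=(c,\tau(1-e^{c})+e^{\rho}d)$ and $\sigma^{-1}y\sigma=(c,e^{-\rho}(d-\tau(1-e^{c})))$.

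For $a=(r,0)$ and $g=(\gamma,\delta)$ one gets $\sigma_k=(kr+\gamma,e^{kr}\delta)$.
\begin{itemize}
\item If $r>0$, then $\sigma_k^{-1}y\sigma_k\to(c,-e^{-\gamma}\delta(1-e^{c}))$, which is bounded and different from $\id$.
\item If $r<0$, then $\sigma_k y\sigma_k^{-1}\to(c,0)$ stays bounded.
\end{itemize}
So this choice breaks $\SEP$ for every tuple with $x_1\notin S$. Both conjugates escape only if the translation part outgrows the hyperbolic part: for $\rho_k\to+\infty$ one needs $e^{-\rho_k}|\tau_k|\to\infty$. This is why the paper takes $\sigma_k=a_ks_k$ with $s_k\in S$ growing very fast in terms of $a_k$ (\Cref{solvable-SEP-2}).

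\textbf{Minor point on c)$\Rightarrow$a).} Your argument is correct but much longer than needed. Apply the clause $(\eps,\eps')=(1,-1)$ of $\SEP$ to $\omega=((1,1),(2,1))$ and $\omega'=((2,1),(1,1))$. This already gives $\sigma_k[x_1,x_2]\sigma_k^{-1}\to\infty$. Lifting $x_1,x_2$ to $G$ then yields a) without \Cref{convid}.
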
 

The implications $a)\implies b)$ and $c)\implies a)$ are trivial. We will only need to check $b)\implies c)$. 
In the case where $G$ is nilpotent or semisimple, an inspection of the proof shows  that $b)$ implies a strong form of $\SEP$: for all $m\geq1$, for an open dense set of $\ux\in G^m$, there exists $(\sigma_{k})\in G^\N$ such that  $ \sigma_k^{\pm 1}  \ux_{\omega,\omega'} \sigma_k^{\pm 1}\to \infty$, i.e. we do not need to allow the possibility $\sigma_k^{-1}  \ux_{\omega,\omega'} \sigma_k\to \id$. However, in the solvable case, the alternative $\sigma_k^{-1}  \ux_{\omega,\omega'} \sigma_k \to \id \text{ or } \infty$ cannot be avoided. This is why we formulate $\SEP$ this way. This constraint for solvable groups can be seen in the following basic example.

\begin{example}\label{ex-affinegroup}
Let $G$ be the group of orientation-preserving affine transformations of $\R$, given by
\[
 G=\R\ltimes\R,\qquad (a,b)(c,d)=(a+c,b+e^a d),
\]
For $\sigma=(r,s)\in G$, and $x=(0, t)\in [G,G]$, we have 
$$
 \sigma x\sigma^{-1}=(0, e^r t),
 \qquad
 \sigma^{-1} x\sigma=(0, e^{-r}t ).
$$
Therefore, any sequence $(\sigma_{k})$ such that $ \sigma_{k} x\sigma^{-1}_{k}\to \infty$ must satisfy that  $\sigma^{-1}_{k} x\sigma_{k} \to \id$. 
\end{example}

%Of course, the converse holds: $\SEP$ implies that some commutator has unbounded conjugacy class. Therefore, \Cref{prop-sim-switch} states an equivalence.

\bigskip

The remainder of the section is devoted to the proof of \Cref{prop-sim-switch}. The next result is a straightforward preliminary, which deserves to be recorded explicitly for it will be used on multiple occasions below.
\begin{lemma} \label{avoidance}
Let $M_{1}, M_{2}$ be real-analytic manifolds with $M_{1}$ connected. Let $M_{2}'\subseteq M_{2}$ be a  real-analytic submanifold which is closed for the ambient topology. Let $f:M_{1}\rightarrow M_{2}$ be a real-analytic map. Then $f^{-1}(M_{2}')$ either has empty interior or coincides with $M_{1}$.
\end{lemma}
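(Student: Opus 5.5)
Assume $f^{-1}(M_2')$ has nonempty interior; the plan is to show it equals $M_1$ by combining local analytic structure with a connectedness argument. Let $A$ be the set of points $p\in M_1$ having a neighborhood $W_p$ with $f(W_p)\subseteq M_2'$, i.e. the interior of $f^{-1}(M_2')$. By definition $A$ is open, and by assumption it is nonempty. Since $M_1$ is connected, it suffices to show that $A$ is closed in $M_1$.

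Let $p\in\overline{A}$. Since $M_2'$ is closed in $M_2$ and $f$ is continuous, $f^{-1}(M_2')$ is closed, so $f(p)\in M_2'$. Because $M_2'$ is an embedded real-analytic submanifold, we can choose real-analytic coordinates $(y_1,\dots,y_d)$ on a neighborhood $V$ of $f(p)$ in which $M_2'\cap V=\{y_{k+1}=\dots=y_d=0\}$. Here closedness of $M_2'$ is what guarantees that, after shrinking $V$, $M_2'\cap V$ is exactly this single slice, with no other pieces of $M_2'$ accumulating at $f(p)$. Next take a connected coordinate ball $W\ni p$ with $f(W)\subseteq V$. On $W$ the functions $g_j=y_j\circ f$ for $j>k$ are real-analytic, and $f^{-1}(M_2')\cap W$ is precisely their common zero set. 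Since $p\in\overline A$, the ball $W$ meets the open set $A$, so every $g_j$ vanishes on a nonempty open subset of $W$. By the identity theorem for real-analytic functions on the connected set $W$, each $g_j$ vanishes on all of $W$. Hence $f(W)\subseteq M_2'$ and $p\in A$, which proves that $A$ is closed.

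The only delicate step is the slice-chart claim, namely that near a point of $M_2'$ the submanifold is cut out by finitely many analytic equations. This holds because $M_2'$ is embedded and closed; without closedness, other sheets of an immersed submanifold could accumulate at $f(p)$. Everything else is the identity theorem combined with connectedness of $M_1$.
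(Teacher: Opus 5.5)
Your proof is correct and follows the paper's argument: take the interior of $f^{-1}(M_2')$, use closedness of $M_2'$ to get $f(p)\in M_2'$ at a limit point $p$, then apply a slice chart and the identity theorem on a connected neighborhood to show the interior is closed, and conclude by connectedness of $M_1$. One correction to your commentary: the single-slice chart at $f(p)$ comes from $M_2'$ being \emph{embedded}, not from its closedness; closedness is needed only to guarantee $f(p)\in M_2'$, which is how your proof actually uses it, and this is exactly what fails for $M_2'=(0,+\infty)\subseteq\R$ at the point $0$.
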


%In the latter case, we must have $H\supseteq [G,G]$ if $w,w'$ involve exactly the same letters, and $H=G$ otherwise.

Note we need $M_{2}'$ to be closed (topologically) in order to rule out situations such as $M_{1}=M_{2}=\R$, $M_{2}'=(0,+\infty)$, $f(x)=x$.

\begin{proof}[Proof of \Cref{avoidance}]
Let $U$ be the largest open set of $M_{1}$ which is included  in $f^{-1}(M_{2}')$.  Assuming $U\neq \emptyset$, we need to show $U=M_{1}$. By connectedness of $M_{1}$, it is sufficient to show that $U$ is closed. Let $x\in \overline{U}$. As $M_{2}'$ is closed, we have $f(x)\in M_{2}'$. As $M_{2}'$ is a real-analytic submanifold, we may consider a real analytic chart $\psi : V\rightarrow \R^d$ defined on a neighborhood $V$ of $f(x)$, and such that $\psi (V\cap M_2')=\psi(V)\cap \R^{d'}\times \{0\}^{d-d'}$. Now, $\psi \circ f$ is well defined and real-analytic on a small connected open neighborhood of $x$, with values in $\R^{d'}\times \{0\}^{d-d'}$ when restricting to some non-empty open set. By  the identity theorem for analytic maps, $\psi \circ f$ sends this whole neighborhood of $x$ to $\R^{d'}\times \{0\}^{d-d'}$, therefore $x\in U$. We have thus established that $U$ is closed, thus finishing the proof. 
\end{proof}

\Cref{avoidance} will be helpful to show that products of the form $\ux_{\omega,\omega'}:=\ux_{\omega}\ux_{\omega'}^{-1}$ generally avoid  Lie subgroups, unless there is a trivial obstruction. Indeed, let $H\subseteq G$ be a  Lie subgroup, let $m\geq 1$, $\omega\neq \omega'\in \Omega_{m}$, consider the open set
$$O_{\omega,\omega'} :=\{\ux \in G^m\,:\, \ux_{\omega,\omega'} \notin H\}.$$
Then in view of  \Cref{analyt-struct} and \Cref{avoidance}, we have either $O_{\omega,\omega'}$  dense or empty.
To rule out the case where $O_{\omega,\omega'}$ is empty, we will rely on the next lemma stating that the set $\{\ux_{\omega,\omega'}\,:\, \ux \in G^m\}$ is rather large.

Given $\omega=(w_{j}, \eps_{j})_{j=1}^k\in \Omega_{m}$, we write $\omega_{ab}$ for the \emph{unordered} collection of pairs $(w_{j}, \eps_{j})$ appearing in $\omega$. For example taking $\omega=((2,-1),(3,1))$, $\omega'=((3,1),(2,-1))$ we have $\omega\neq \omega'$ but $\omega_{ab}=\omega'_{ab}$.

\begin{lemma} \label{Iww'-large}
Let $\omega,\omega'\in \Omega_{m}$ with $\omega\neq \omega'$. Write $I_{\omega,\omega'}:=\{\ux_{\omega,\omega'}\, :\, \ux \in G^m\}$. 
If $\omega_{ab}\neq \omega'_{ab}$ then $I_{\omega,\omega'}$ contains a neighborhood of $\id$ in $G$. If $\omega_{ab}= \omega'_{ab}$ then $I_{\omega,\omega'}$ contains every commutator $[g,h]$ where $g,h\in G$. 
\end{lemma}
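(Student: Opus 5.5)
The plan is to make explicit choices of $\ux$, setting most coordinates equal to $\id$ so that $\ux_{\omega,\omega'}$ collapses to a simple word.

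\emph{Case $\omega_{ab}\neq\omega'_{ab}$.} Then some pair $(i,\eps)$ appears in one of $\omega,\omega'$ but not in the other. Two sub-cases arise.

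\begin{itemize}
\item The index $i$ appears in exactly one of the two tuples. Set $x_j=\id$ for all $j\neq i$. Then $\ux_{\omega,\omega'}=x_i^{\pm1}$, which ranges over all of $G$.
\item The index $i$ appears in both tuples with opposite signs. Again set $x_j=\id$ for $j\neq i$. Then $\ux_{\omega,\omega'}=x_i^{\eps}x_i^{\eps}=x_i^{\pm2}$.
\end{itemize}

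In the second sub-case, the squaring map $g\mapsto g^2$ has differential $2\,\id$ at $\id$ in exponential coordinates. It is therefore a local diffeomorphism near $\id$, and its image contains a neighborhood of $\id$. Hence $I_{\omega,\omega'}$ contains a neighborhood of $\id$ in both sub-cases.

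\emph{Case $\omega_{ab}=\omega'_{ab}$.} Now $\omega'$ is a reordering of $\omega$, with the same indices and signs. Since $\omega\neq\omega'$, the common length satisfies $r\geq 2$, and the orderings differ. We aim to reduce to two letters. We choose two indices $p,q$ occurring in both tuples whose relative order differs in $\omega$ and $\omega'$. Such a pair exists: a permutation preserving all relative orders is the identity.

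Set $x_j=\id$ for $j\notin\{p,q\}$. Then $\ux_\omega=x_p^{\alpha}x_q^{\beta}$ and $\ux_{\omega'}=x_q^{\beta}x_p^{\alpha}$, or the reverse, where $\alpha,\beta\in\{\pm1\}$ are the fixed signs. This gives
\[
\ux_{\omega,\omega'}=x_p^{\alpha}x_q^{\beta}x_p^{-\alpha}x_q^{-\beta}=[x_p^{\alpha},x_q^{\beta}],
\]
or its inverse $[x_q^{\beta},x_p^{\alpha}]$. Given $g,h$, we choose $x_p=g^{\alpha}$ and $x_q=h^{\beta}$ in the first situation. In the reversed situation, we swap roles so that the word reads $[x_q^\beta,x_p^\alpha]$, taking $x_q=g^{\beta}$ and $x_p=h^{\alpha}$. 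Either way we obtain $[g,h]$.

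The only step needing care is the combinatorial choice of $p,q$ and the bookkeeping of signs. It is routine, because each index carries the same sign in $\omega$ and $\omega'$.
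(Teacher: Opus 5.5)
Your proof is correct and follows the same route as the paper. In both, setting the irrelevant coordinates to $\id$ reduces $\ux_{\omega,\omega'}$ to one of three forms: $x_i^{\pm1}$, or $x_i^{\pm2}$ (where squaring is a local diffeomorphism at $\id$), or a commutator of the two letters whose relative order differs. The only difference is cosmetic: you track the signs $\alpha,\beta$ explicitly, whereas the paper normalizes all exponents to $1$ by substituting $x_i\mapsto x_i^{\eps_i}$.
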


\begin{proof}  Assume $\omega_{ab}\neq \omega'_{ab}$. If $\omega$ and $\omega'$ do not have the same set of letters $w_{k}\in \{1, \dots, m\}$, then $I_{\omega,\omega'}=G$. Otherwise, the same letters appear but they do not have the same signed exponent. In this case $I_{\omega,\omega'}$ contains all $g^2$ where $g\in G$ which implies the result because the map $g\mapsto g^2$ is a submersion at $\id$.

Assume $\omega_{ab}= \omega'_{ab}$.  We may assume all signed exponents equal to $1$. Recalling $\omega\neq \omega'$, we get that $\omega,\omega'$ contain two letters $w_{k_{1}}$ and $w_{k_{2}}$ which appear in different orders, so $I_{\omega,\omega'}$ contains every commutator. 
\end{proof}

We now engage in the proof of $b)\implies c)$ in \Cref{prop-sim-switch}.  We will distinguish several cases depending on the structure of $G$. 

\subsection{The case where $G$ is nilpotent} \label{nilpSEP}

We validate \Cref{prop-sim-switch} in the nilpotent setting.

\begin{proposition}[Simultaneous escape - nilpotent case] \label{nilp-SEP}
Let  $G$ be a nilpotent connected Lie group.  The  conditions $a)$, $b)$, $c)$ in  \Cref{prop-sim-switch} are mutually  equivalent and hold if and only if  $[G,[G,G]]$ is unbounded.
\end{proposition}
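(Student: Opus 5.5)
The plan is to prove the three-way equivalence through the cycle ``$[G,[G,G]]$ bounded $\Rightarrow$ b) fails'' and ``$[G,[G,G]]$ unbounded $\Rightarrow$ c)''. Since a)$\Rightarrow$b) and c)$\Rightarrow$a) are trivial, these two implications close the loop.

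\emph{Bounded case.} Assume $G^{[3]}$ is bounded. By \Cref{max-comp-in-nilp} its closure is a compact central subgroup. For $x\in\overline{[G,G]}$ and $g\in G$ we have $\Ad(g)x\,x^{-1}=[g,x]\in[G,\overline{[G,G]}]\subseteq\overline{G^{[3]}}$, using continuity of the commutator. Hence $\Ad(G)x\subseteq\overline{G^{[3]}}\,x$ is bounded, so b) fails.

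\emph{Unbounded case: reduction to a model group.} Assume $G^{[3]}$ is unbounded. I will exhibit a quotient $Q$ of $G$ satisfying $\SEP$.
\begin{enumerate}
\item First quotient by $T_G$ (\Cref{max-comp-in-nilp}). The image of $G^{[3]}$ in $G/T_G$ is nontrivial, since otherwise $G^{[3]}\subseteq T_G$ would be bounded.
\item Then quotient by $G^{[4]}$, and by a hyperplane of the (now central) vector group $G^{[3]}$.
\end{enumerate}
This gives a simply connected nilpotent group $Q$ of step exactly $3$ with $\mathfrak q^{[3]}\neq0$. By \Cref{fact-nilp-spc} I identify $Q$ with $(\mathfrak q,*)$. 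I will take switching sequences of the form $\sigma_k=\exp(kY)$, i.e.\ $\sigma_k=kY$ in these coordinates.

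\emph{Asymptotics in $Q$.} For $z\in\mathfrak q$,
\[
\sigma_k^{\pm1}z\sigma_k^{\mp1}=z\pm k[Y,z]_{\mathfrak q}+\tfrac{k^2}{2}[Y,[Y,z]_{\mathfrak q}]_{\mathfrak q}.
\]
This tends to $\infty$ in both sign cases as soon as $[Y,z]_{\mathfrak q}\neq0$. Moreover, when $[Y,[Y,z]]\neq0$ the quadratic term dominates in both directions, and when it vanishes the linear term does. For $\varepsilon=\varepsilon'$, the projection of $\sigma_k^{\varepsilon}z\sigma_k^{\varepsilon}$ to $\mathfrak q/[\mathfrak q,\mathfrak q]$ equals $\bar z\pm2k\bar Y$, which tends to $\infty$ whenever $\bar Y\neq0$. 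So it suffices to find a dense set of $(\ux,Y)\in Q^m\times\mathfrak q$ such that
\begin{itemize}
\item $\bar Y\neq0$, and
\item $[Y,\ux_{\omega,\omega'}]_{\mathfrak q}\neq0$ for all $\omega\neq\omega'$ in the finite set $\Omega_m$.
\end{itemize}
This even yields the strong form of $\SEP$ with $\infty$ in every case.

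\emph{Genericity (main obstacle).} I will apply \Cref{avoidance} to the analytic map $F_{\omega,\omega'}:Q^m\times\mathfrak q\to\mathfrak q$, $(\ux,Y)\mapsto[Y,\ux_{\omega,\omega'}]_{\mathfrak q}$, with $M_2'=\{0\}$. The set $\{F_{\omega,\omega'}\neq0\}$ is then either dense open or empty, and the same holds for $\{\bar Y\neq0\}$, which is clearly nonempty. A finite intersection of dense open sets is dense, so it remains to rule out the case $F_{\omega,\omega'}\equiv0$.
\begin{itemize}
\item If $\omega_{ab}\neq\omega'_{ab}$, \Cref{Iww'-large} shows that $\ux_{\omega,\omega'}$ ranges over a neighborhood of $0$. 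That neighborhood spans $\mathfrak q$ and $\mathfrak q$ is nonabelian, so some $Y$ fails to centralize it.
\item If $\omega_{ab}=\omega'_{ab}$, then $\ux_{\omega,\omega'}$ ranges over all commutators $[g,h]$. Pick $a,b,c$ with $[a,[b,c]_{\mathfrak q}]_{\mathfrak q}\neq0$, which exist since $\mathfrak q^{[3]}\neq0$. In step $3$ the group commutator of $b,c$ equals $[b,c]_{\mathfrak q}$ plus a central term, so $Y=a$ does not commute with it.
\end{itemize}
The delicate point is exactly this: ensuring that the relevant elements of $I_{\omega,\omega'}$ lie outside the centralizer of a fixed $Y$. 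That is where step $3$, rather than $2$, is essential. Finally, a dense set of tuples in $Q^m$ is obtained by projecting the dense set of $(\ux,Y)$, so $Q$ satisfies $\SEP$.
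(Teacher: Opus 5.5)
Your proof is correct and follows essentially the paper's route. The paper also gets b) $\Rightarrow$ $[G,[G,G]]$ unbounded from the inclusion $[G,\overline{[G,G]}]\subseteq\overline{[G,[G,G]]}$. For the converse it passes to $G/T_G$, uses \Cref{avoidance} and \Cref{Iww'-large} to make every $\ux_{\omega,\omega'}$ generically non-central, takes $\sigma_k=g^k$ for a generic $g$, and concludes because $k\mapsto g^{\eps k}\ux_{\omega,\omega'}g^{\eps' k}$ is a nonconstant polynomial map. Your extra reduction to step exactly $3$ (quotienting by $G^{[4]}$ and a hyperplane of $G^{[3]}$), your abelianization argument for $\eps=\eps'$, and your joint genericity in $(\ux,Y)$ are harmless but unnecessary variants of the same argument.
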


The next lemma is the core of the proof.

\begin{lemma} \label{nilp-SEP-core}\label{nilpotent-SSEP}
Let $G$ be a nilpotent connected  Lie group. If $[G,[G,G]]$ is unbounded, then a quotient of  $G$  satisfies $\SEP$.
\end{lemma}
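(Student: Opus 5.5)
The plan is to pass to the simply connected quotient $\bar G:=G/T_{G}$ of \Cref{max-comp-in-nilp}, and to take as switching sequence a one-parameter family $\sigma_k=\exp(t_kX)$ with $t_k\to+\infty$ and a single well-chosen $X\in\bar\kg$ that does not depend on $\ux$.

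\emph{Reduction.} The subgroup $T_{G}$ is compact, so the image of $[G,[G,G]]$ in $\bar G$ is still unbounded; in particular $\bar\kg^{[3]}\neq\{0\}$. Since $\bar G$ is simply connected nilpotent, \Cref{fact-nilp-spc} identifies it with $(\bar\kg,*)$. The exponential map is a diffeomorphism, so a sequence tends to $\infty$ in $\bar G$ exactly when it is unbounded in the vector space $\bar\kg$. It therefore suffices to show that $\bar G$ satisfies $\SEP$.

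\emph{Choice of $X$.} Because $[\bar\kg,[\bar\kg,\bar\kg]_{\kg}]_{\kg}=\bar\kg^{[3]}\neq 0$, the set of $X\in\bar\kg$ with $\ad X|_{[\bar\kg,\bar\kg]}=0$ is a proper linear subspace. Fix $X$ outside it, and also $X\notin[\bar\kg,\bar\kg]_{\kg}$ (again a proper subspace, so a generic $X$ works). For $y\in\bar\kg$ we estimate the four expressions required in \Cref{def-SEP}.
\begin{itemize}
\item \emph{Conjugations} $\sigma_k^{\pm1}y\sigma_k^{\mp1}$. We have $\Ad(\exp(\pm tX))y=e^{\pm t\,\ad X}y$. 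This is a polynomial in $t$, and it is non-constant as soon as $[X,y]_{\kg}\neq0$. Hence both conjugations tend to $\infty$ whenever $[X,y]_{\kg}\neq 0$.
\item \emph{Products} $\sigma_k y\sigma_k$ and $\sigma_k^{-1}y\sigma_k^{-1}$. Project to the abelianization $\bar\kg/[\bar\kg,\bar\kg]_{\kg}$. The images are $\pm2t_kX+\bar y$, which go to $\infty$ because $X\notin[\bar\kg,\bar\kg]_{\kg}$. This holds for every $y$.
\end{itemize}
So the whole matter reduces to proving that, for a dense set of $\ux\in\bar G^m$ and every pair $\omega\neq\omega'$ in $\Omega_m$, we have $[X,\ux_{\omega,\omega'}]_{\kg}\neq0$. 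Note that we even get divergence in the $(-1,1)$ case, i.e. the strong form of $\SEP$.

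\emph{Genericity.} For each of the finitely many pairs $\omega\neq\omega'$, the set
\[
F_{\omega,\omega'}:=\{\ux\in\bar G^m : [X,\ux_{\omega,\omega'}]_{\kg}=0\}
\]
is the preimage, under the analytic map $\ux\mapsto\ux_{\omega,\omega'}$, of the centralizer of $X$. This centralizer is a closed linear subspace, hence a closed analytic submanifold. By \Cref{avoidance}, $F_{\omega,\omega'}$ either has empty interior or equals $\bar G^m$. We exclude the second case with \Cref{Iww'-large}.
\begin{itemize}
\item If $\omega_{ab}\neq\omega'_{ab}$, then $I_{\omega,\omega'}$ contains a neighborhood of $\id$. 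That neighborhood cannot lie inside the centralizer of $X$, which is a proper subspace since $X$ is not central.
\item If $\omega_{ab}=\omega'_{ab}$, then $I_{\omega,\omega'}$ contains all group commutators $[g,h]$. These generate $[\bar G,\bar G]$, whose Lie algebra $[\bar\kg,\bar\kg]_{\kg}$ is not centralized by $X$ by our choice. Hence some commutator lies outside the centralizer.
\end{itemize}
Consequently each complement $\bar G^m\smallsetminus F_{\omega,\omega'}$ is open dense, and the finite intersection of these complements is open dense. Every $\ux$ in it has switching sequence $\sigma_k=\exp(t_kX)$.

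I expect the main obstacle to be the case $\omega_{ab}=\omega'_{ab}$. There, $\ux_{\omega,\omega'}$ is confined to $[\bar G,\bar G]$, and we need the non-centralizing condition to be witnessed by an element of $I_{\omega,\omega'}$. This is exactly where the hypothesis $\bar\kg^{[3]}\neq0$ is used: in step $2$, $X$ would centralize every such difference, and conjugation would do nothing.
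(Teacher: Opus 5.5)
Your argument is correct and is essentially the paper's proof. You reduce to the simply connected quotient $G/T_G$, and use \Cref{avoidance} and \Cref{Iww'-large} to get an open dense set of tuples whose differences avoid a proper closed subgroup; the hypothesis $\bar\kg^{[3]}\neq 0$ enters as $[\bar G,\bar G]$ not being centralized. You then switch with powers of a single element, with divergence coming from non-constant polynomial maps in exponential coordinates. The variations are minor. You fix the direction $X$ before $\ux$, so the avoided subgroup is the centralizer of $\exp X$ rather than $Z(G)$, and the switching sequence does not depend on $\ux$. Your step ``some commutator lies outside the centralizer'' silently uses that this centralizer is a subgroup, which is true. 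Finally, you handle the cases $\eps=\eps'$ through the abelianization, whereas the paper requires $g^{\eps}y_i g^{\eps'}\neq y_i$.
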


\begin{proof}
Let $T_{G}$ be the largest compact (central)  subgroup of $G$ (see \Cref{max-comp-in-nilp}). 
We show $G/T_{G}$ satisfies $\SEP$. We may assume $T_{G}=\{\id\}$, i.e. $G$ is simply connected. 
%In particular, $[G,G]$ is now closed.

Let $m\geq 1$. For $\omega\neq \omega'\in \Omega_{m}$, write 
$$O_{\omega,\omega'} :=\{\ux \in G^m\,:\, \ux_{\omega,\omega'}  \notin Z(G)\}.$$
Note $[G,G]$ is not fully contained in $Z(G)$ because $G^{[3]}$ is non-trivial by assumption. Therefore, by \Cref{Iww'-large}, we have $O_{\omega,\omega'}\neq \emptyset$, and by \Cref{avoidance}, this means that $O_{\omega,\omega'} $ is  dense (and open). The set  $O=\cap_{\omega \neq \omega'} O_{\omega,\omega'}$ is thus also open and dense. 

Let $\ux\in O$. Write $\{y_{i} \,:\, i\in I\} =\{\ux_{\omega, \omega'} \,:\, \omega\neq \omega'\}$. As no $y_{i}$ belongs to $Z(G)$, we may find $g$ such that $g^{\eps} y_{i} g^{\eps'}\neq y_{i} $ for every $i\in I$, every $\eps, \eps'\in \{-1,1\}$. We then  observe that for each $i$ and $\eps, \eps'$, we have $g^{\eps k} y_{i} g^{\eps 'k}\to_{k} \infty$.  Indeed, identifying $G$ with its Lie algebra $\kg$ via the exponential map (\Cref{fact-nilp-spc}), we see that each map $\N\rightarrow \kg, k\mapsto g^{\eps k} y_{i} g^{\eps 'k}$ is polynomial, and non-constant by choice of $g$. Therefore this map is proper, which shows $\ux$ satisfies $\SEP$ with switching sequence $\sigma_{k}=g^k$.
\end{proof}

We may now easily derive \Cref{nilp-SEP}.

\begin{proof}[Proof of \Cref{nilp-SEP}]
If $[G,[G,G]]$ is unbounded, then  $c)$ holds in view of \Cref{nilp-SEP-core}.  If $b)$ holds, then   $[G,\overline{[G,G]}]$ is clearly unbounded, which transfers to $\overline{[G,[G,G]]}$, then $[G,[G,G]]$. This justifies $b)\implies c)$ in \Cref{nilp-SEP}, therefore  $a)$, $b)$, $c)$ are mutually equivalent, and furthermore equivalent to $[G,[G,G]]$ being unbounded.
\end{proof}

\subsection{The case where $G$ is solvable}

We validate \Cref{prop-sim-switch} for solvable Lie groups. 

\begin{proposition}[Simultaneous escape - solvable case]  \label{solv-SEP}
Let  $G$ be a solvable connected Lie group.  The conditions $a)$, $b)$, $c)$ in \Cref{prop-sim-switch} are mutually equivalent and hold if and only if,  setting $S=\overline{[G,G]}$, we either have that the action of $\Ad(G)$ on the maximal quotient vector space $S/S'$ (equivalently $S/\overline{[S,S]}$) has some unbounded orbit, or $S'$ (equivalently $[S,S]$) is unbounded. 
%The action of $\Ad(G)$ on the maximal quotient vector space $S/S'$ where $S=\overline{[G,G]}$  is by isometry (for some Euclidean norm), and $S'$ (equivalently $\overline{[S,S]}$) is compact. 
\end{proposition}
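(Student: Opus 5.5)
Set $S=\overline{[G,G]}$, and call $(\ast)$ the disjunction in the statement: either $\Ad(G)$ has an unbounded orbit on $S/S'$, or $S'$ is unbounded. Since $a)\implies b)$ and $c)\implies a)$ are trivial, the plan is to prove $b)\implies(\ast)$ and $(\ast)\implies c)$. It also yields the parenthetical equivalences. The group $S'/\overline{[S,S]}$ is a torus, normal in $G$, hence central by \Cref{autom-K}. So orbits on $S/S'$ and on $S/\overline{[S,S]}$ are bounded simultaneously. Likewise $S'$ is bounded iff $\overline{[S,S]}$ is, iff $[S,S]$ is.

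\textbf{Step $b)\implies(\ast)$, by contrapositive.} Assume all $\Ad(G)$-orbits on $S/S'$ are bounded and $S'$ is compact. Let $x\in S$. Modulo $S'$, the orbit $\Ad(G)x$ projects to a bounded set of the vector space $S/S'$. Since $S'$ is compact, the preimage in $S$ of a bounded set is bounded, so $\Ad(G)x$ is bounded. This is the whole argument for this direction.

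\textbf{Step $(\ast)\implies c)$.} I would mimic \Cref{nilp-SEP-core}, first passing to a quotient that removes compact pieces, then using \Cref{avoidance} and \Cref{Iww'-large} to get an open dense set of good tuples.

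\emph{Case 1: $\Ad(G)$ has an unbounded orbit on $V:=S/S'$.} Pass to the quotient $G/S'$. The action of $G$ on $V$ is linear and factors through the abelian group $G/S$, so it is a commuting family of linear maps. Since some orbit is unbounded, there is $g\in G$ with $\Ad(g)$ not contained in a compact group; for such $g$, $\Ad(g)|_V$ has an eigenvalue of modulus $\neq 1$ or a nontrivial unipotent part. Let $W\subseteq V$ be the sum of the generalized eigenspaces on which $\Ad(g)$ is not isometric. Take $\ux$ so that no $\ux_{\omega,\omega'}$ has its $V$-component (relative to a chosen splitting) in the subspace where $\Ad(g)$ acts isometrically. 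For the elements with $\omega_{ab}=\omega'_{ab}$, \Cref{Iww'-large} lands them in $[G,G]$, and \Cref{avoidance} turns nonemptiness of the good set into density.

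For elements not in $S$, I would use instead that their image in $G/S$ is nonzero. A suitable switching sequence should then be $\sigma_k=g^{k}h$ or $\sigma_k=g^{\pm k}$, and one checks the four sign patterns.
\begin{itemize}
\item Conjugation $\sigma_k\,y\,\sigma_k^{-1}$ expands the $W$-part, or contracts it to $\id$, as in \Cref{ex-affinegroup}; this explains the ``$\id$ or $\infty$'' alternative in $\SEP$.
\item The products $\sigma_k^{\pm1}y\,\sigma_k^{\pm1}$ with equal signs escape because their $G/S$ component is $\pm 2k\bar g+\bar y$.
\end{itemize}
This requires the image of $g$ in $G/S$ modulo its torus to be of infinite order, which I would arrange by choosing $g$ generic.

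\emph{Case 2: the orbits on $S/S'$ are bounded but $S'$ is unbounded.} Quotient by the maximal normal compact subgroup. Then use that $[S,S]$ lies in the nilradical $N$, and argue that $[S,S]$ unbounded forces $[N,[N,N]]$ or $[G,N]$-type commutators to escape under conjugation. The aim is to reduce to a polynomial-escape argument as in the nilpotent case, but along a one-parameter subgroup of $S$ acting unipotently.

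\textbf{Main obstacle.} I expect Case 2 and the $\id$-versus-$\infty$ bookkeeping in Case 1 to be the hardest part. One needs a single $\sigma_k$ that works simultaneously for all $\omega\neq\omega'$, including elements whose $G/S$ and $V$ components interact, while ruling out bounded non-convergent behavior coming from rotation parts. To handle this I would first choose $\ux$ generic using \Cref{avoidance} against finitely many closed analytic subgroups, for instance the centralizer-type subgroups on which $\Ad(g)$ is isometric.
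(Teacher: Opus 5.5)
Your overall architecture matches the paper's. The direction $b)\Rightarrow(\ast)$ is correct: a bounded image in $S/S'$ together with compact $S'$ gives a bounded orbit. The parenthetical equivalences are also correct, and your two cases correspond to \Cref{solvable-SEP-2} and \Cref{solvable-SEP1}. However, $(\ast)\Rightarrow c)$ carries all the content, and it is not established. The switching sequences you propose in Case 1 provably fail.

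\textbf{Elements outside $S$.} For $\omega_{ab}\neq\omega'_{ab}$, a generic $y=\ux_{\omega,\omega'}$ has nonzero image in the abelian group $G/S$, and conjugation does not change that image. So $\sigma_k^{-1}y\sigma_k\to\id$ is impossible, and you need escape to $\infty$. Sequences $g^{k}h$ do not provide it. In the affine group of \Cref{ex-affinegroup}, take $g=(1,0)$ and $y=(a,b)$ with $a\neq0$: then $g^{-k}yg^{k}=(a,e^{-k}b)\to(a,0)$, a bounded nontrivial limit. Using $g^{-k}h$ instead destroys $\sigma_ky\sigma_k^{-1}\to\infty$. Note also that (SEP) always demands $\sigma_ky\sigma_k^{-1}\to\infty$; only $\sigma_k^{-1}y\sigma_k$ may tend to $\id$, contrary to your first bullet.

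The missing idea is the paper's replacement of $a_k$ by $a_ks_k$, with $s_k\in S$ growing very fast. After disposing of the nilpotent case via \Cref{nilp-SEP}, \Cref{solv->nil} makes $\Ad(y)|_S\neq\id_S$ a nonempty, hence generic, condition. Then $\|\Ad(y)s_k-s_k\|\geq\eps'\|s_k\|$ forces both $\Ad(a_ks_k)y$ and $\Ad(s_k^{-1}a_k^{-1})y$ to escape. Meanwhile $\Ad(s_k)$ has no effect on $y\in S$, since $S$ is abelian after quotienting by $S'$.

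\textbf{Rotation components.} For $y\in S$, the rotation issue cannot be cured by genericity via \Cref{avoidance}: a generic $y\in S$ has nonzero components on every weight space. Take $G=\R\ltimes(\R\oplus\C)$ with $t$ acting by $(e^{t},e^{it})$. Any $\sigma_k$ with $\sigma_ky\sigma_k^{-1}\to\infty$ has $t$-coordinate $t_k\to+\infty$. Then $\sigma_k^{-1}y\sigma_k=(e^{-t_k}y_1,e^{-it_k}y_2)$ tends neither to $\id$ nor to $\infty$ when $y_2\neq0$. So $G$ itself fails (SEP), and a further quotient is forced.

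This is the role of \Cref{abelian-linear-SSEP}. It quotients by an invariant subspace $W$ so that on $V/W$ only one weight (with its conjugate) survives. Along it, $a_k$ expands and $a_k^{-1}$ contracts every nonzero vector. In the unipotent case, both escape off a proper subspace $L$, which generic tuples avoid.

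\textbf{Case 2.} This is only a sketch, and it omits $y\notin S$. The paper first makes $S$ simply connected and takes $\sigma_k=s^k$ with $s\in S$ generic. For $y\in S\smallsetminus Z(S)$ it uses polynomial escape. For $y\notin S$ it imposes the generic condition $\Ad(y)|_{S/S'}\neq\pm\id$, again available from non-nilpotency. Then the $S/S'$-component $\eps kv+\eps'k\Ad(y)v$ of $s^{\eps k}ys^{\eps'k}y^{-1}$ diverges. No boundedness assumption on $S/S'$ is needed there.
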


We recall that the {largest vector space quotient $S/S'$ of $S$} was formally  introduced in \Cref{vs-radical}.

\bigskip

We start with  a useful criterion for a solvable Lie group to be nilpotent. 

\begin{lemma}\label{solv->nil}
Let $G$ be a solvable connected  Lie group. Set $S=\overline{[G,G]}$, and $S/S'$ the largest  vector space quotient of $S$.  If the representation $\Ad(G) \acts S/S'$ is trivial, then $G$ is nilpotent.
\end{lemma}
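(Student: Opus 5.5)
The plan is to argue at the level of the Lie algebra $\kg$ and use Engel's theorem: $\kg$ is nilpotent as soon as $\ad(X)$ is nilpotent for every $X\in\kg$. For a solvable real Lie algebra it is enough to show that $\ad(X)$ has only the eigenvalue $0$, over $\C$, for every $X$.

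Let $\ks=\Lie(S)$, $\ks'=\Lie(S')$ and $\mathfrak l=\Lie(\overline{[S,S]})$. Then
$$[\kg,\kg]_{\kg}\subseteq \ks,\qquad [\ks,\ks]_{\kg}\subseteq \mathfrak l\subseteq \ks'\subseteq \ks.$$
Since $G$ is solvable, $[G,G]$ is nilpotent, hence so is its closure $S$. Therefore $\ks$ is a nilpotent ideal of $\kg$.

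Fix $X\in\kg$ and set $D=\ad(X)$, a derivation preserving all of the above ideals. I would read off the eigenvalues of $D$ layer by layer.
\begin{enumerate}[label=(\arabic*)]
\item On $\kg/\ks$ the action is zero, because $D(\kg)\subseteq[\kg,\kg]_{\kg}\subseteq\ks$.
\item On $\ks/\ks'$, which is the Lie algebra of $S/S'$, the action is the differential of the trivial representation $\Ad(G)\acts S/S'$, hence zero. This is exactly the hypothesis.
\item On $\ks'/\mathfrak l$, which is the Lie algebra of the torus $S'/\overline{[S,S]}$, the group $G$ is connected and the automorphism group of a torus is discrete (\Cref{autom-K}). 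So $\Ad(G)$ acts trivially there, and $D$ is zero on this layer too.
\item On $\ks$ itself I would use the lower central series of $\ks$. Each quotient $\ks^{[k]}/\ks^{[k+1]}$ is a $D$-equivariant quotient of $(\ks/[\ks,\ks]_{\kg})^{\otimes k}$, because $D$ is a derivation. Hence every eigenvalue of $D$ on $\ks$ is a sum of eigenvalues of $D$ on $\ks/[\ks,\ks]_{\kg}$.
\end{enumerate}
So it suffices to show that $D$ has only the eigenvalue $0$ on $\ks/[\ks,\ks]_{\kg}$. Steps (2) and (3) already give this on $\ks/\mathfrak l$, so what remains is the layer $\mathfrak l/[\ks,\ks]_{\kg}$.

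This remaining layer is the main obstacle. The immersed subgroup $[S,S]$, with Lie algebra $[\ks,\ks]_{\kg}$, need not be closed, and its closure may have a strictly larger Lie algebra $\mathfrak l$. My first attempt would be the classical structure of closures of normal immersed subgroups: for $H=[S,S]$ normal in $S$, the quotient $\mathfrak l/\Lie(H)$ is abelian and central in $\mathfrak l/\Lie(H)$, and it arises from a toral direction. Concretely, passing to $\tilde S$, the simply connected cover (simply connected nilpotent, so $[\tilde S,\tilde S]$ is closed and the exponential map is a diffeomorphism), the closure in $S$ of the image of $[\tilde S,\tilde S]$ comes from adding part of the central discrete kernel's Zariski closure, as in the proof of \Cref{max-comp-in-nilp}. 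That would mean $\mathfrak l/[\ks,\ks]_{\kg}$ maps into the Lie algebra of a compact connected abelian group on which the connected group $G$ acts by automorphisms. By \Cref{autom-K} again, $G$ then acts trivially and $D$ vanishes on this layer.

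If that identification is too delicate, the fallback is to avoid $\mathfrak l$ entirely. I would redo step (3) with $S'' := $ the preimage in $S$ of the maximal torus of $S/\overline{[S,S]}$. This is the same thing as $S'$, but now I would compare directly with the Zariski-closed central subgroup $\tilde H\subseteq \tilde S$ containing $\Ker(\tilde S\to S)$. The goal would be to check that every Lie algebra direction between $[\ks,\ks]_{\kg}$ and $\ks'$ is tangent to a compact subgroup of some quotient of $S$ normalized by $G$.

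Once all eigenvalues of $\ad(X)$ vanish for every $X$, Engel's theorem gives that $\kg$ is nilpotent, hence $G$ is nilpotent.
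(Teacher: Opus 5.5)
Your steps (1)--(3) correctly show that $D=\ad(X)$ vanishes on $\kg/\ks$, $\ks/\ks'$ and $\ks'/\mathfrak l$. Your tensor-power argument in (4) is a valid substitute for the paper's Jacobi induction $[\kg,\ks^{[k]}]_{\kg}\subseteq\ks^{[k+1]}$.

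\textbf{The gap.} The layer $\mathfrak l/[\ks,\ks]_{\kg}$, which you rightly call the crux, is never actually treated. Your first attempt is phrased conditionally (``that would mean''), and its assertion that the quotient is ``abelian and central'' in itself carries no information. The fallback only restates what must be shown. Since $[S,S]$ need not be closed, as you note, nothing you have written excludes a nonzero eigenvalue of $D$ on this layer. As it stands the proof is incomplete.

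\textbf{How to close it.} Your first attempt is the right idea, and it takes a few lines. Let $T_S$ be the maximal compact subgroup of $S$, a central torus by \Cref{max-comp-in-nilp}. Let $\pi:\tilde S\to S$ be the universal cover and $\tilde H$ the Zariski closure of $\Ker\pi$ as in that proof, so that $T_S=\pi(\tilde H)$.
\begin{enumerate}[label=(\alph*)]
\item The product $[\tilde S,\tilde S]\tilde H$ is a connected subgroup of the simply connected nilpotent group $\tilde S$, hence closed, and it contains $\Ker\pi$.
\item Therefore its image $[S,S]T_S$ is closed in $S$. This gives $\overline{[S,S]}\subseteq[S,S]T_S$, hence $\mathfrak l\subseteq[\ks,\ks]_{\kg}+\Lie(T_S)$.
\item Since $T_S$ is characteristic in $S$, it is normal in $G$, so \Cref{autom-K} forces the connected group $G$ to centralize it. Hence $D$ vanishes on $\Lie(T_S)$.
\item Consequently $D(\mathfrak l)\subseteq D([\ks,\ks]_{\kg})\subseteq[\ks,\ks]_{\kg}$, i.e. $D=0$ on $\mathfrak l/[\ks,\ks]_{\kg}$, and your argument goes through.
\end{enumerate}

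\textbf{Comparison with the paper.} The paper reaches the same point more economically by quotienting by $T_S$ at the very start. This is harmless because $T_S$ is central in $G$ and contained in $S'$: the hypothesis passes to $G/T_S$, and a central extension of a nilpotent group is nilpotent. Afterwards $S$ is simply connected and $S'=[S,S]$ is closed. Your layer (3) and the problematic layer $\mathfrak l/[\ks,\ks]_{\kg}$ both disappear, and the hypothesis reads directly $[\kg,\ks]_{\kg}\subseteq[\ks,\ks]_{\kg}$. The Jacobi induction then yields $\kg^{[k+1]}\subseteq\ks^{[k]}$ without any appeal to eigenvalues or Engel's theorem.
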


\begin{proof}
It is known that $S$ is nilpotent (consequence of Lie's theorem). Let $T_{S}$ be the largest torus in $S$. As $T_{S}$ is invariant by automorphisms of $G$, it is in particular a normal subgroup. As $G$ is connected, the action of $G$ by conjugation on $T_{S}$ is trivial (\Cref{autom-K}), i.e. $[G,T_{S}]=\{\id\}$. Note also $T_{S}\subseteq S'$. Therefore, in order to prove the lemma, we may replace $G$ by $G/T_{S}$, i.e. we may assume $T_{S}$ trivial, which means $S$ is simply connected. In this case $S'=[S,S]$. The assumption   that $\Ad(G) \acts S/S'$ is trivial now means $[\kg, \ks]_{\kg}  \subseteq [\ks, \ks]_{\kg}$ where $\ks$ stands for  the Lie algebra of $S$. Using the Jacobi identity, we deduce by induction $[\kg, \ks^{[k]}]_{\kg}\subseteq  \ks^{[k+1]}$ for all $k$. As $\ks$ is nilpotent, the lemma follows. 
\end{proof}

We now deal with solvable Lie groups whose second derived group is unbounded. This situation is illustrated by $G_{1}$ in \Cref{ex-SO2Heis}.

\begin{lemma}\label{solvable-SEP1}
Let \(G\) be a connected solvable Lie group, set $S=\overline{[G,G]}$. If $[S,S]$ is unbounded then a quotient of \(G\) satisfies
\(\SEP\).
\end{lemma}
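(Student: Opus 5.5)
We mimic the nilpotent argument of \Cref{nilp-SEP-core}, taking the switching sequence inside the nilpotent group $S$ rather than in $G$. First we reduce to a simply connected situation. Recall $S=\overline{[G,G]}$ is a connected nilpotent normal subgroup, and its maximal torus $T_{S}$ is normal in $G$, hence central by \Cref{autom-K}. Passing to $G/T_{S}$ keeps $[S,S]$ unbounded: $[S,S]$ is an iterated commutator in a simply connected nilpotent group modulo a compact central subgroup, and $T_{S}$ is compact. After this reduction $S$ is simply connected, $[S,S]$ is a nontrivial closed subgroup, and $S$ is identified with its Lie algebra $\ks$ via \Cref{fact-nilp-spc}. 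We then aim to show $G$ itself satisfies $\SEP$, possibly after quotienting by a further compact central subgroup if needed.

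\emph{Generic avoidance.} Consider the closed subgroup $C:=\{g\in G: g \text{ commutes with } S\}$, the centralizer of $S$ in $G$. It is an analytic closed subgroup, since it is the fixed set of the analytic map $\Ad\colon G\to \mathrm{Aut}(\ks)$. We claim $[G,G]\not\subseteq C$. Indeed, otherwise $[S,S]$ would centralize $S$, and also $S\subseteq \overline{[G,G]}\subseteq C$, so $S$ would be abelian, contradicting $[S,S]\neq\{\id\}$. By \Cref{Iww'-large}, each $I_{\omega,\omega'}$ contains either a neighborhood of $\id$ or all commutators, so it meets $G\smallsetminus C$. Hence by \Cref{avoidance} the set
$$O=\bigcap_{\omega\neq\omega'}\{\ux\in G^m : \ux_{\omega,\omega'}\notin C\}$$
is open and dense.

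\emph{Switching sequence.} Take $\ux\in O$ and write $y_{i}$ for the finitely many elements $\ux_{\omega,\omega'}$. Each $y_{i}$ fails to commute with some element of $S$, so by analyticity a generic $s\in S$ satisfies $s^{\eps}y_{i}s^{\eps'}\neq y_{i}$ for all $i$ and all signs. We want $k\mapsto s^{\eps k}y_{i}s^{\eps' k}$ to go to infinity. Write $y_{i}s^{\eps' k}y_{i}^{-1}=\exp(k\,\Ad(y_{i})\eps'\log s)$, so
$$s^{\eps k}y_{i}s^{\eps' k}=\bigl(s^{\eps k}\exp(k\,\Ad(y_{i})\eps'\log s)\bigr)\,y_{i}.$$
The bracketed factor lies in $S$, where it is a polynomial in $k$ by BCH, because $\Ad(y_{i})$ acts linearly on $\ks$. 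This polynomial is non-constant unless it is identically $\id$. It is non-constant for $(\eps,\eps')\neq(-1,1)$ as soon as $\Ad(y_{i})\log s\neq \mp\log s$, which holds for generic $s$ because $\Ad(y_{i})$ is not the identity on $\ks$. A non-constant polynomial path in a simply connected nilpotent group, closed in $G$, is proper. So $\sigma_{k}^{\eps}y_{i}\sigma_{k}^{\eps'}\to\infty$, and for $(\eps,\eps')=(-1,1)$ we get $\to\infty$ unless $y_{i}$ commutes with $s$, which we excluded.

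\emph{Expected obstacle.} The delicate step is the case $(\eps,\eps')=(1,1)$ or $(-1,-1)$. The generic choice of $s$ must rule out $\Ad(y_{i})\log s=-\log s$ for all $i$ simultaneously. That is an analytic condition, so \Cref{avoidance} together with finiteness of $I$ settles it, provided $\Ad(y_{i})\neq-\id$ on $\ks$. This can fail only on a proper analytic set of $\ux$, so we shrink $O$ accordingly. A further subtlety is ensuring that properness in $S$ transfers to $G$, which uses that $S$ is closed.
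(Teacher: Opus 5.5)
Your proof is correct, and it takes a more uniform route than the paper.

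\textbf{The paper's route.} The paper first reduces to non-nilpotent $G$ via \Cref{nilp-SEP}. It then splits the pairs $\omega\neq\omega'$ into two cases.
\begin{itemize}
\item When $\omega_{ab}=\omega'_{ab}$, it requires $\ux_{\omega,\omega'}\in S\smallsetminus Z(S)$. Escape of $s^{\eps k}\ux_{\omega,\omega'}s^{\eps' k}$ then comes from the BCH polynomial inside $S$.
\item When $\omega_{ab}\neq\omega'_{ab}$, it requires $\Ad(\ux_{\omega,\omega'})_{|S/S'}\neq\pm\id_{S/S'}$ and detects escape linearly in the abelianization $S/S'$. Showing this condition is non-empty is exactly where \Cref{solv->nil} and the non-nilpotent reduction are needed.
\end{itemize}

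\textbf{Your route.} You impose the single condition $\ux_{\omega,\omega'}\notin C_G(S)$. You then run one BCH computation valid for every $y\in G$: the element $s^{\eps k}(ys^{\eps' k}y^{-1})$ lies in $S$, and its polynomial in $k$ has linear term $k(\eps X+\eps'\Ad(y)X)$.

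\textbf{Comparison.} Your version removes the case split and the dependence on \Cref{nilp-SEP} and \Cref{solv->nil}. It uses the nilpotent structure of all of $S$, not just linear algebra on $S/S'$. The paper's version has the advantage of matching the pattern of \Cref{solvable-SEP-2} and \Cref{case-S/S'-nontriv}. Both arguments use the switching sequence $s^k$ and yield the strong form of $\SEP$, with all four sign choices escaping.

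\textbf{The obstacle you flagged is vacuous.} $\Ad(y)$ restricts to a Lie algebra automorphism of $\ks$. If $\Ad(y)_{|\ks}=-\id$, then for all $X,Y\in\ks$ we would have $[X,Y]_{\kg}=[\Ad(y)X,\Ad(y)Y]_{\kg}=\Ad(y)[X,Y]_{\kg}=-[X,Y]_{\kg}$, forcing $\ks$ to be abelian. So once $y\notin C_G(S)$, both eigenspaces $\Ker(\Ad(y)_{|\ks}\mp\id)$ are proper subspaces, and no further shrinking of $O$ is needed. Your shrinking is harmless anyway: $\ux=(\id,\dots,\id)$ shows the bad set is proper.

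\textbf{Two minor points.}
\begin{itemize}
\item Your sentence ``which holds for generic $s$ because $\Ad(y_i)$ is not the identity on $\ks$'' only covers the $+$ sign. The observation above supplies the $-$ sign.
\item Your justification that $[S,S]$ stays unbounded in $G/T_S$ is muddled. The clean reason is that the quotient map by a compact normal subgroup is proper.
\end{itemize}
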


\begin{proof}
By \Cref{nilp-SEP}, we may assume $G$ non-nilpotent. In particular, by \Cref{solv->nil},  the action $\Ad(G)\acts S/S'$ is non-trivial. Recall that $S$ is nilpotent (by Lie's theorem). Up to quotienting $S$ by its largest torus, we may suppose $S$ simply connected (\Cref{max-comp-in-nilp}).  Let $m\geq1$. For $\omega\neq \omega'\in \Omega_{m}$, set
$$
O_{\omega,\omega'} := \left\{
    \begin{array}{ll}
     \{\ux\in G^m \,:\, \ux_{\omega, \omega'}\in S\smallsetminus Z(S) \} & \mbox{if } \omega_{ab} =\omega'_{ab} \\
       \{\ux\in G^m \,:\, \Ad(\ux_{\omega, \omega'})_{|S/S'} \neq  \pm \id_{S/S'}\}  & \mbox{if  $\omega_{ab} \neq \omega'_{ab}$.}
\end{array}
\right.
$$
Let us check $O_{\omega,\omega'}$ is nonempty. In the case where $\omega_{ab}=\omega'_{ab}$, having  $O_{\omega,\omega'}=\emptyset$ would imply that every commutator $[a,b]$ ($a,b\in G$) is in $Z(S)$ (\Cref{Iww'-large}), which in turn would yield $S$ is abelian, contradicting the standing assumption.  In the  case where $\omega_{ab}\neq \omega'_{ab}$,  having $O_{\omega,\omega'}=\emptyset$ would imply that $\Ad(G)_{|S/S'}=\{\id_{S/S'}\}$, i.e. $G$ is nilpotent (see \Cref{solv->nil}), which we have excluded from the start. We have thus checked $O_{\omega,\omega'}$ is nonempty. By \Cref{avoidance}, each $O_{\omega,\omega'}$ is dense (and open), therefore the set $O:=\cap_{\omega\neq \omega'}O_{\omega,\omega'}$ is also open and dense in $G^m$. 

Let $\ux \in O$. We show $\ux$ satisfies $\SEP$. By definition of $O$, we may find $s\in S$, such that for  $\omega\neq \omega'\in \Omega_{m}$, $\eps,\eps'\in \{-1,1\}$, writing $v=s\mod S'$, we have 
$$
    \begin{array}{ll}
     s^{\eps}\ux_{\omega, \omega'}s^{\eps'} \neq \ux_{\omega, \omega'} & \mbox{if } \omega_{ab} =\omega'_{ab} \\
        \Ad(\ux_{\omega, \omega'})_{|S/S'} (v)\neq \pm v & \mbox{if  $\omega_{ab} \neq \omega'_{ab}$.}
\end{array}
$$ 
Indeed, for fixed $\omega,\omega'$, the associated requirement holds for a dense open set of $s\in S$, whence we may consider an element $s$ satisfying them all. We now check that $(s^k)_{k}$ is a switching sequence for $\ux$. Let $\omega,\omega'\in \Omega_{m}$. If $\omega_{ab} =\omega'_{ab}$, note the map $k\mapsto s^{\eps k}\ux_{\omega, \omega'}s^{\eps'k }$ is polynomial (\Cref{fact-nilp-spc}) and non-constant by choice of $s$, therefore it is proper: $s^{\eps k}\ux_{\omega, \omega'}s^{\eps'k }\to_{k} \infty$ as $k$ tends to $+\infty$. If $\omega_{ab} \neq \omega'_{ab}$, note that in the vector space $S/S'$, 
$$\| \Ad(\ux_{\omega, \omega'})_{|S/S'} (kv) \pm kv\| \to_{k} +\infty$$
and lifting to $G$ this justifies
$$ s^{\eps k} \ux_{\omega, \omega'} s^{\eps' k} \to_{k} \infty,$$
thus finishing the proof.
\end{proof}

To deal with other solvable groups, we  record the following linear-algebraic fact. 

\begin{lemma}\label{abelian-linear-SSEP} Let \(A\) be a connected abelian Lie group, acting linearly on a finite-dimensional real vector space $V$. Assume the action has an unbounded
orbit. Then there exists an \(A\)-invariant subspace $W\subseteq V$, a proper subspace $L\subsetneq V/W$, a sequence \((a_k)\) in \(A\) such that, for the quotient action on $V/W$, every $u\in V/W\smallsetminus L$ satisfies
\[
 a_k u\longrightarrow\infty,
 \qquad
 a_k^{-1} u\longrightarrow 0 \text{ or } \infty.
\]
\end{lemma}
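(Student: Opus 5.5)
The plan is to reduce to a single one-parameter direction of escape. The image of $A$ in $GL(V)$ is a connected abelian group of commuting matrices, say $\rho(A)$. I would complexify $V$ and decompose it into generalized weight spaces $V_\chi$ for the commuting family $\rho(A)$. On each $V_\chi$, every $\rho(a)$ acts as $\chi(a)(\id+n_\chi(a))$, with $n_\chi(a)$ nilpotent. Since $A$ is connected abelian, the exponential map $\Lie(A)\to A$ is onto, so $\rho(\exp X)=e^{d\rho(X)}$. Each $|\chi(\exp X)|$ equals $e^{\lambda_\chi(X)}$ for a real linear form $\lambda_\chi$ on $\Lie(A)$. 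The unipotent parts depend polynomially on $X$.

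An unbounded orbit means that either some $\lambda_\chi\neq 0$, or all $\lambda_\chi=0$ and some unipotent part is nontrivial. I will treat these two cases separately.

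\emph{Case 1: some $\lambda_\chi\neq0$.} Choose $X\in\Lie(A)$ outside the finitely many hyperplanes $\ker\lambda_\chi$ (over those $\chi$ with $\lambda_\chi\ne0$), so that $\lambda_\chi(X)\neq 0$ whenever $\lambda_\chi\neq0$. Let $V^{\le 0}$ be the real $A$-invariant subspace formed by the sum of the (real parts of the) generalized weight spaces with $\lambda_\chi(X)\le 0$, and take $W=V^{<0}$, the part with $\lambda_\chi(X)<0$. Then in $V/W$ the weights split into those with $\lambda(X)>0$, forming $V^{>0}$, and those with $\lambda=0$, forming $V^0$. Set $L=V^0$ (its image in $V/W$). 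This is a proper subspace because $V^{>0}\neq0$; if instead only negative weights occur, I replace $X$ by $-X$. I then take $a_k=\exp(kX)$. For $u\notin L$, the $V^{>0}$ component of $u$ is nonzero, so $a_ku\to\infty$, since exponential growth dominates polynomial factors. For $a_k^{-1}u$, the $V^{>0}$ component tends to $0$, while the $V^0$ component is acted on by a unipotent-times-isometric (compact torus) flow. That component is either $0$, giving $a_k^{-1}u\to0$, or grows polynomially, or stays bounded without tending to $0$. The last possibility is the real obstacle.

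To handle it, I would enlarge $W$ to include $V^0$ as well, so that $W=V^{\le 0}$ and $V/W\cong V^{>0}$. Then I take $L=\{0\}$: every nonzero $u$ satisfies $a_ku\to\infty$ and $a_k^{-1}u\to0$. This choice is cleaner, and it is what I would actually use. Case 1 then needs no further analysis.

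\emph{Case 2: all $\lambda_\chi=0$.} Then $\rho(A)$ is a product of a relatively compact part and a unipotent part, and since the orbit is unbounded, some $d\rho(X)$ has nonzero nilpotent part $N$. I pass to a quotient to make the picture simple. Let $W$ be the kernel of the unipotent parts of all $\rho(a)$, intersected appropriately; a more concrete choice is a maximal $A$-invariant subspace on which the action is bounded (isometric), which is a proper invariant subspace. On $V/W$ there is some $X$ with $\rho(\exp kX)$ acting as an isometry $k_k$ times a polynomially growing unipotent $e^{kN}$. I take $L=\ker N$ in $V/W$. It is proper because $N\ne0$ on $V/W$ after the quotient, and it is invariant since everything commutes. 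For $u\notin L$, the norm of $e^{\pm kN}u$ grows like a nonconstant polynomial in $k$, and the commuting isometric factor does not change norms. Hence both $a_k^{\pm1}u\to\infty$, which fits the allowed alternative.

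The step I expect to be delicate is ensuring, in Case 2, that $N$ remains nonzero on $V/W$. I also need $\|e^{kN}u\|\to\infty$ exactly when $Nu\neq0$, which follows from the top nonzero term of $\sum k^jN^ju/j!$. Finally, the real/complex bookkeeping of the weight spaces has to be checked, but it is routine.
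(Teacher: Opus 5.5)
Your Case 1, in its final form ($W=V^{\le 0}$, $L=\{0\}$, $a_k=\exp(kX)$), is correct. It matches the paper's Case 1 in spirit. The paper just quotients further, down to the line $\C e_p$ (or $\C e_p\oplus\C\sigma e_p$) spanned by the last vector of a triangular basis of one non-unitary weight space, where $A$ acts by scalars.

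The gap is in Case 2, exactly at the step you flagged: your concrete choice of $W$ can absorb all of the unipotent growth you need. Take $A=\R$ acting on $V=\R^2$ by $t\mapsto\begin{pmatrix}1&t\\0&1\end{pmatrix}$.
\begin{itemize}
\item The maximal $A$-invariant subspace with bounded action is $W=\R e_1$. The common fixed space of the unipotent parts is the same line.
\item $A$ acts trivially on $V/W\cong\R$, so $N$ induces $0$ on $V/W$ and $L=\ker N$ is all of $V/W$.
\item In fact no proper $L\subsetneq V/W$ and no sequence $(a_k)$ can work on this quotient, since $a_k u=u$ for every $u$.
\end{itemize}
More generally, quotienting by $\ker N$ kills the induced $N$ whenever $N^2=0$. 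So the claim that $N\neq 0$ on $V/W$ is false for the $W$ you propose.

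The fix is not to quotient at all in Case 2. Take $W=\{0\}$ and choose $X$ such that the Jordan decomposition $d\rho(X)=S+N$ has $N\neq 0$. Set $L=\ker N\subsetneq V$. Because all weights are unitary, $S$ is semisimple with purely imaginary spectrum and commutes with $N$. Hence $e^{kS}$ is uniformly bounded with uniformly bounded inverse, and $\|\rho(\exp(\pm kX))u\|\asymp\|e^{\pm kN}u\|\to\infty$ for every $u\notin\ker N$. This is exactly your polynomial estimate, now applied on $V$ itself.

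The paper instead quotients only by the weight spaces other than $\lambda_1,\bar\lambda_1$. That quotient leaves the nilpotent part on $V_{\C,\lambda_1}$ intact, and the paper then takes $L$ generated by $\Ker N^m$ and its complex conjugate.
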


\begin{proof}
We can assume $V=\R^d$ ($d\geq1$). Set $V_{\C}=\C^d$ its complexification. Then $V_{\C}$ is a complex representation of $A$. As $A$ is connected abelian, its action on $V_{\C}$ can be put in triangular form. More precisely, there exists a set of homomorphisms $\Lambda=\{\lambda : A\rightarrow \C^*\}$ such that 
$$V_{\C}=\oplus_{\lambda\in \Lambda} V_{\C,\lambda}$$
where each $V_{\C,\lambda}$ is a non-trivial $A$-invariant subspace, and for each $a\in A$, we have $a_{|V_{\C,\lambda}}=\lambda(a)\id_{V_{\C,\lambda}}+ N_{\lambda}(a)$ with $N_{\lambda}(a)\in \End(V_{\C,\lambda})$ nilpotent.
Note the family of weights $\Lambda$ is conjugation invariant because $A$ is real. We now distinguish two cases, depending on whether unboundedness of $A\acts V$ comes from the weights $\lambda$ or the nilpotent part $N_{\lambda}$.
\bigskip

\underline{Case 1}: \emph{There exists $\lambda_{0}\in \Lambda$ with $\lambda_{0}(A)\not\subseteq S^1$.} 
Let $e_{1}, \dots, e_{p}$ be a basis for $V_{\C, \lambda_{0}}$ in which $A_{|V_{\C, \lambda_{0}}}$ is represented by upper triangular matrices. Write $W'_{\C}=\Span_{\C}(e_{1}, \dots, e_{p-1})$, note $W'_{\C}$ is $A$-invariant. Denoting by $\sigma$ the complex conjugation map on $\C^d$,  set
 $$W_{\C}:=(W'_{\C}+\sigma(W'_{\C})) \bigoplus \oplus_{\lambda \in \Lambda\smallsetminus \{\lambda_{0}, \sigma\circ\lambda_{0}\}}V_{\C,\lambda}.$$
 Let $a\in A$ such that $|\lambda_{0}(a)|>1$. 
We check that every non-zero $u\in V_{\C}/W_{\C}$ satisfies $a^k u \rightarrow\infty,
 a^{-k} u\rightarrow 0$. If $\sigma \circ \lambda_{0}=\lambda_{0}$, then we can take $e_{1}, \dots, e_{p}$ in $V_{\C, \lambda_{0}}\cap V$, then $W'_{\C}=\sigma(W'_{\C})$, and $V_{\C}/W_{\C}\simeq \C e_{p}$ yielding the claim. 
 If $\sigma \circ \lambda_{0} \neq \lambda_{0}$, then $V_{\C,\lambda_{0}}$ and $\sigma V_{\C,\lambda_{0}}=V_{\C,\sigma\circ \lambda_{0}}$ are in direct sum. In this case $V_{\C}/W_{\C}\simeq \C e_{p}\oplus \C \sigma e_{p}$ and the claim follows as well. 
Finally, as $W_{\C}$ is $\sigma$-invariant (i.e. defined over $\R$), the escape property on $V_{\C}/W_{\C}$ descends to $V/W$. 
 
 \bigskip
\underline{Case 2}: \emph{Every $\lambda \in \Lambda$ satisfies $\lambda(A)\subseteq S^1$.} 
Let $\lambda_{1}\in \Lambda$ such that $A\acts V_{\C,\lambda_{1}}$ has an unbounded orbit.  The Lie algebra of $A_{|V_{\C,\lambda_{1}}}$ can be put in upper triangular form. Let $X\in \Lie(A)$. The assumption of case $2$ tells us $X_{|V_{\C,\lambda_{1}}}$ has diagonal coefficients in $i\R$ (they are purely imaginary). Unboundedness  guarantees we may choose $X_{|V_{\C,\lambda_{1}}}$ to have non-trivial nilpotent part: $X_{|V_{\C,\lambda_{1}}}= it \id_{V_{\C,\lambda_{1}}}+ N$ where $t\in \R$ and $N \in \End(V_{\C,\lambda_{1}})$ is nilpotent non-zero. Let $m\geq 1$ such that $N^m\neq0$, $N^{m+1}=0$. Set $a_{k}=e^{kX}$, $W_{\C}=\oplus_{\lambda \in \Lambda\smallsetminus \{\lambda_{1}, \sigma\circ\lambda_{1}\}}V_{\C,\lambda}$.  Set  $L_{\C}=\Ker  N^m +\sigma(\Ker N^m)\subseteq V_{\C}/W_{\C}$.
We check that every non-zero $u\in V_{\C}/W_{\C}\smallsetminus L_{\C}$ satisfies $a_{k}u\rightarrow\infty$ and $a_{k}^{-1}u \rightarrow\infty$. Assume $\sigma\circ \lambda_{1}=\lambda_{1}$. Then $V_{\C}/W_{\C}\simeq V_{\C,\lambda_{1}}$ and $L_{\C}=\Ker N^{m}$. For $u\in V_{\C,\lambda_{1}}$, we have 
$a_{k}u=e^{i kt }e^{k N}u=e^{ikt}(\sum_{j=0}^m  \frac{k^j}{j!}N^j u)$. If $u\notin \Ker N^{m}$, then up to a phase, this is a polynomial in $k$ of degree $m$, whence it escapes to infinity as $k\to\pm\infty$.  In the other case where $\sigma\circ \lambda_{1} \neq \lambda_{1}$, we have $V_{\C,\lambda_{1}}$ and $V_{\C,\sigma \circ \lambda_{1}}$ in direct sum, and $V_{\C}/W_{\C}\simeq V_{\C,\lambda_{1}} \oplus V_{\C,\sigma \circ \lambda_{1}}$. We can then argue similarly. 
Finally, as $W_{\C}$ and  $L_{\C}$ are $\sigma$-invariant (i.e. defined over $\R$), the escape property on $(V_{\C}/W_{\C})\smallsetminus L_{\C}$ descends to $(V/W)\smallsetminus L$. 
\end{proof}

We now deal with solvable Lie groups $G$ whose conjugation action on the quotient vector space of $\overline{[G,G]}$ has some unbounded orbit. A basic example  is given by the affine group from \Cref{ex-affinegroup}.

\begin{lemma}\label{solvable-SEP-2}
Let \(G\) be a connected solvable Lie group, $S=\overline{[G,G]}$. If the action $G\acts S/S'$ has an unbounded orbit, then a quotient of \(G\) satisfies \(\SEP\).
\end{lemma}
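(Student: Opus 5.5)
We will pass to a quotient $\bar G=G/N_W$ in which the vector space part of $S$ becomes exactly the space $V/W$ provided by \Cref{abelian-linear-SSEP}. Then we show that $\bar G$ satisfies $\SEP$, using a switching sequence drawn from the $\Ad$-action.

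\textbf{Reductions.} Set $V=S/S'$. Since $G$ is solvable, $S$ is nilpotent, and the action $\Ad(G)\acts V$ factors through the abelian group $G/S$. Indeed, $S$ acts trivially on its abelianization, and $S'\supseteq \overline{[S,S]}$. So we may apply \Cref{abelian-linear-SSEP} to the connected abelian group $A=\Ad(G)_{|V}$. This gives an invariant subspace $W$, a proper subspace $L\subsetneq V/W$, and a sequence $a_k=\Ad(g_k)_{|V/W}$ with $a_k u\to\infty$ and $a_k^{-1}u\to 0$ or $\infty$ for all $u\notin L$.

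Let $N_W\subseteq S$ be the preimage of $W$. It is a closed normal subgroup, since $W$ is $\Ad(G)$-invariant. Replace $G$ by $G/N_W$; then $S$ becomes $V/W$, a vector space. Because conjugation by $g_k$ is the linear map $a_k$ on $S$, for every $y\in S\smallsetminus L$ and every $(\eps,\eps')$ we get the following. If $\eps'=-\eps$, then $g_k^{\eps}yg_k^{-\eps}$ equals $a_k^{\pm1}y$. This tends to $\infty$ for the sign $(1,-1)$, and to $\id$ or $\infty$ for $(-1,1)$, which is exactly the allowed alternative in $\SEP$.

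\textbf{Genericity.} Fix $m$. For $\omega\neq\omega'$ with $\omega_{ab}=\omega'_{ab}$, we have $\ux_{\omega,\omega'}\in[G,G]\subseteq S$. The set of $\ux$ with $\ux_{\omega,\omega'}\notin L$ is open, and it is dense by \Cref{avoidance}. It is nonempty by \Cref{Iww'-large}: commutators generate a dense subgroup of $S$, so they cannot all lie in the proper closed subspace $L$. Here we use that $L$, viewed in $G$, is a closed analytic submanifold.

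For the cases $(\eps,\eps')=(1,1)$ and $(-1,-1)$, the element $g_k^{\eps}yg_k^{\eps}$ has image $\pm 2\chi(g_k)$-type drift in $G/S$, up to bounded error. If $g_k\to\infty$ in $G/S$, this escapes, except when $g_k^2$ stays bounded. The cleaner route is the Remark after \Cref{def-SEP}: adjoin an auxiliary element, so that $\sigma_k=g_kt$ with $t$ generic. Then $\sigma_k^{\pm1}y\sigma_k^{\pm1}$ reduces to $a_k^{\pm1}$ acting on elements of the form $\Ad(t)y$, times $\sigma_k^{\pm 2}$, which escapes.

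\textbf{Main obstacle.} The main obstacle is the case $\omega_{ab}\neq\omega'_{ab}$, where $\ux_{\omega,\omega'}\notin S$ generically. Write $y=\ux_{\omega,\omega'}=hs$ with $h$ projecting nontrivially onto $G/S$. Then $\sigma_k^{\eps}y\sigma_k^{\eps'}$ has $G/S$-component $\eps\bar\sigma_k+\bar y+\eps'\bar\sigma_k$. If this diverges we are done. Otherwise $\eps'=-\eps$, and the projection is $\bar y$, so we must look at the $S$-coordinate. It equals $a_k^{\pm1}\bigl(\sigma_k$-twisted cocycle of $y\bigr)$, which is roughly $(a_k\Ad(y)a_k^{-1}-\id)$ applied to a vector, plus the image of $a_k$ on $s$.

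The plan here is to enlarge $L$ and choose the auxiliary generic element so that the relevant vector $\Ad(y)v-v$ avoids $L$. This is the analogue of the condition $\Ad(\ux_{\omega,\omega'})_{|S/S'}\neq\pm\id$ in \Cref{solvable-SEP1}. It is an open condition, and it is nonempty by \Cref{Iww'-large} and the non-triviality of the action. Since every condition is open and dense, intersecting over the finitely many $\omega,\omega'$ gives a dense set of $\SEP$ tuples. I expect this bookkeeping of cocycles to be the delicate part.
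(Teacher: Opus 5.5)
\textbf{The gap.} Your reductions and your treatment of pairs with $\omega_{ab}=\omega'_{ab}$ are fine. The case $\omega_{ab}\neq\omega'_{ab}$, however, is only sketched, and the sketch cannot work. In that case $y=\ux_{\omega,\omega'}$ generically has nonzero image $\bar y$ in $G/S$. So for $(\eps,\eps')=(-1,1)$ the option ``$\to\id$'' is unavailable, and you need $\sigma_k^{-1}y\sigma_k\to\infty$ (as well as $\sigma_k y\sigma_k^{-1}\to\infty$).

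Suppose $\sigma_k=g_kt$ or $tg_k$ for a \emph{fixed} auxiliary $t$. Then the $S$-coordinate of $\sigma_k^{-1}y\sigma_k$ is a fixed vector plus $\Ad(g_k^{-1})$ applied to a bounded vector. This stays bounded whenever $\Ad(g_k^{-1})$ contracts, as in Case~1 of \Cref{abelian-linear-SSEP}. Concretely, take the affine group of \Cref{ex-affinegroup} with $g_k=(k,0)$ and $y=(c,d)$, $c\neq 0$. One has $g_k^{-1}yg_k=(c,e^{-k}d)\to(c,0)$. Hence $\sigma_k^{-1}y\sigma_k$ converges to a conjugate of $(c,0)$, which is neither $\id$ nor $\infty$, whatever $t$ is.

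Requiring $\Ad(y)v-v\notin L$, with $L$ enlarged or not, cannot repair this. Outside $L$ you only know $a_k^{-1}u\to 0$ or $\infty$, and ``$\to 0$'' in the $S$-coordinate is exactly the failure mode once $\bar y\neq 0$.

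\textbf{The missing idea.} The auxiliary factor must depend on $k$ and escape in $S$ faster than $g_k$ acts. The paper takes $\sigma_k=g_ks_k$ with $s_k\in S$ and $\|s_k\|\to\infty$ very fast. The $s_k$ are chosen, depending only on $\ux$, so that $\|(\Ad(y)-\id)s_k\|\ge\eps'\|s_k\|$ for all the finitely many $y=\ux_{\omega,\omega'}$ with $\omega_{ab}\neq\omega'_{ab}$. The only generic condition needed is $\Ad(\ux_{\omega,\omega'})_{|S}\neq\id_S$, with no reference to $L$. It is nonempty by \Cref{Iww'-large}, because $G$ acts nontrivially on $S=V/W$.

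With this choice, $\Ad(s_k)y=\bigl((\id-\Ad(y))s_k\bigr)\,y$ escapes at rate of order $\|s_k\|$. For $\|s_k\|$ large enough relative to $g_k$, this dominates the effect of $\Ad(g_k)$, so $\Ad(\sigma_k)y\to\infty$.

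For the inverse, set $z_k=\Ad(\sigma_k^{-1})y$ and use the identity $s_k\,\Ad(z_k)(s_k^{-1})=\Ad(g_k^{-1})(y)\,z_k^{-1}$. Its left side has norm $\|(\Ad(y)-\id)s_k\|$ in $S$, since $\Ad(z_k)_{|S}=\Ad(y)_{|S}$. This forces $z_k\to\infty$ once $\|s_k\|$ also dominates $\Ad(g_k^{-1})y$.

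Because $\Ad(S)$ is trivial on $S$, the replacement $g_k\mapsto g_ks_k$ does not disturb the pairs with $\omega_{ab}=\omega'_{ab}$. The $(\pm1,\pm1)$ cases still follow by projecting to $G/S$.

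Separately, your appeal to the Remark after \Cref{def-SEP} is circular, since it presupposes $\SEP$ for an enlarged tuple. It is also unnecessary: unboundedness of $\Ad(g_k)$ on $V/W$ forces $\bar g_k\to\infty$ in the connected abelian group $G/S$, hence $\bar g_k^{2}\to\infty$.
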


\begin{proof}
If $G$ is nilpotent, then the result follows by \Cref{nilp-SEP}. 
Therefore, we may assume $G$ non-nilpotent. 
Up to quotienting by $S'$, we may assume $S'=\{\id\}$, in particular the nilpotent subgroup $S\unlhd G$ can be seen as an embedded vector space. Noting the conjugation action $G\acts S$ factors through $G/S$ and $G/S$ is abelian, \Cref{abelian-linear-SSEP}  tells us that up to quotienting a second time, there exists a proper subspace $L\subseteq S$ and a sequence $(a_{k})_{k}\in G^{\N}$ such that every $s\in S\smallsetminus L$ satisfies 
$$\Ad(a_{k})s\to\infty\quad \quad \quad \Ad(a^{-1}_{k})s\to 0 \text{ or }\infty.$$

Let $m\geq 1$. We define an open dense set of $\ux\in G^m$ which will ultimately validate $\SEP$. Let $\omega\neq \omega'\in \Omega_{m}$.  Set
$$
O_{\omega,\omega'} := \left\{
    \begin{array}{ll}
     \{\ux\in G^m \,:\, \ux_{\omega, \omega'}\in S\smallsetminus L\} & \mbox{if } \omega_{ab} =\omega'_{ab} \\
       \{\ux\in G^m \,:\, \Ad(\ux_{\omega, \omega'})_{|S} \neq  \id_{S}\}  & \mbox{if  $\omega_{ab} \neq \omega'_{ab}$.}
\end{array}
\right.
$$
Let us check $O_{\omega,\omega'}$ is nonempty. In the case where $\omega_{ab}=\omega'_{ab}$, having  $O_{\omega,\omega'}=\emptyset$ would imply that every commutator $[a,b]$ ($a,b\in G$) is in $L$ (\Cref{Iww'-large}), which in turn would yield $L=S$, which is absurd.  In the  case where $\omega_{ab}\neq \omega'_{ab}$,  having $O_{\omega,\omega'}=\emptyset$ would imply that $\Ad(G)_{|S}=\{\id_{S}\}$, i.e. $G$ is nilpotent (see \Cref{solv->nil}), which we have excluded from the start. We have thus checked $O_{\omega,\omega'}$ is nonempty. By \Cref{avoidance}, each $O_{\omega,\omega'}$ is dense (and open), therefore the set $O:=\cap_{\omega\neq \omega'}O_{\omega,\omega'}$ is also open and dense in $G^m$.

 We now fix $\ux\in O$ and construct a switching sequence for $\ux$. Let $\omega\neq \omega'\in \Omega_{m}$ with  $\omega_{ab} =\omega'_{ab}$. The property that $\ux_{\omega, \omega'}\in S\smallsetminus L $ implies 
$$\Ad(a_{k}) \ux_{\omega, \omega'} \to\infty\quad \quad \quad \Ad(a^{-1}_{k})\ux_{\omega, \omega'} \to 0 \text{ or }\infty.$$
Moreover, projecting to $G/S$, we also see that for $\eps\in \{-1,1\}$, we have 
$$a^{\eps}_{k}\ux_{\omega,\omega'} a^\eps_{k}\to \infty.$$
 This justifies the escape property for pairs $\omega,\omega'$ such that   $\omega_{ab} =\omega'_{ab}$. 
 
 In order to deal with  the situations where  $\omega_{ab} \neq \omega'_{ab}$, we will replace the sequence $a_{k}$ by $a_{k}s_{k}$ where $s_{k}\in S$ is a suitable  parameter, going to infinity very fast.  Note such replacement does not affect the estimates obtained above in the case $\omega_{ab} = \omega'_{ab}$ because $\Ad(S)\acts S$ is trivial. 
 
 Let $\omega\neq \omega'\in \Omega_{m}$ with $\omega_{ab}\neq \omega'_{ab}$. 
By definition of $O$, we have
$$\|\Ad(\ux_{\omega, \omega'})_{|S} - \id_{S} \| \geq \eps$$
where $\eps>0$ depends only on $\ux$. 
This allows us to consider a sequence $(s_{k})_{k}\in S^{\N}$ and a constant $\eps'>0$, both depending only on $\ux$ (and not on $\omega,\omega'$),  such that in $S$ seen as a vector space, we have
\begin{equation*}
 \|\Ad(\ux_{\omega, \omega'})s_{k} - s_{k} \|  \geq \eps' \|s_{k}\| \to +\infty.
 \end{equation*}
Seen in $G$, this gives in particular
$$\Ad(s_{k})\ux_{\omega, \omega'}\to \infty$$
with a rate of escape related to the growth of $\|s_{k}\|$.
Up to choosing $\|s_{k}\|$ to grow fast enough in terms of $a_{k}$, we further have
$$\Ad(a_{k}s_{k})\ux_{\omega, \omega'}\to \infty. $$
Now, we also check expansion for the action of $(a_{k}s_{k})^{-1}$. 
As the conjugation action $G\acts S$ factors through $G/S$, and $G/S$ is abelian, we have for all $k\geq0$,  
\begin{equation} \label{div-ksab}
\|\Ad(s^{-1}_{k} a^{-1}_{k} \ux_{\omega} \ux_{\omega'}^{-1} a_{k}s_{k})s_{k} - s_{k} \|=  \|\Ad(\ux_{\omega, \omega'})s_{k} - s_{k} \|   \geq \eps' \|s_{k}\| \to_{k} +\infty. 
 \end{equation}
Note the first term can be seen in $G$ as
 $$s_{k} \Ad(s^{-1}_{k} a^{-1}_{k} \ux_{\omega, \omega'} a_{k}s_{k})(s^{-1}_{k})=\Ad(a^{-1}_{k})(\ux_{\omega, \omega'})(\Ad(s^{-1}_{k} a^{-1}_{k})(\ux_{\omega, \omega'}))^{-1}.$$ 
 Imposing $\|s_{k}\|$ to grow fast enough in terms of $\Ad(a^{-1}_{k})(\ux_{\omega, \omega'})$, Equation \eqref{div-ksab} forces 
 $$\Ad(s^{-1}_{k} a^{-1}_{k})(\ux_{\omega, \omega'})\to \infty.$$
 On the other hand, by projection to $G/S$, we clearly have for $\eps\in \{-1,1\}$
 $$ (a_{k}s_{k})^\eps\ux_{\omega, \omega'}(a_{k}s_{k})^\eps \to \infty. $$
This finishes the proof of $\SEP$, with switching sequence $a_{k}s_{k}$.
\end{proof}

We now conclude the proof of \Cref{solv-SEP}.

%\noindent\emph{Remark}
%Although this extra characterization was not used during the proof of \Cref{sol-case}, we may observe 

\begin{proof}[Proof of \Cref{solv-SEP}] 
Denote by $d)$ the additional criterion in \Cref{solv-SEP}. We only need to check $b) \implies d) \implies c)$. The first implication is clear. The second is the combination of Lemmas \ref{solvable-SEP1}, \ref{solvable-SEP-2}.
\end{proof}

\subsection{The case where $G$ is semisimple}
We show   non-compact semisimple Lie groups always satisfy the simultaneous escape property.

\begin{lemma} \label{semisimple-case}
Let $G$ be a non-compact semisimple connected Lie group. Then $G$ satisfies $\SEP$.
\end{lemma}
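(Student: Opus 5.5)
Plan: take a single switching sequence $\sigma_k=a^k$ with $a\in G$ a suitable $\R$-split element, and use \Cref{avoidance} (or the identity theorem directly) to show that a generic tuple $\ux$ makes every product $\ux_{\omega,\omega'}$ avoid an explicit "bad" analytic subset. This would give the strong form of $\SEP$: $\sigma_k^{\pm1}\ux_{\omega,\omega'}\sigma_k^{\pm1}\to\infty$ for all four sign choices.

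\textbf{Setup.} Everything is read through the adjoint representation $\Ad:G\to GL(\kg)$. Its kernel is the center $Z(G)$, which is discrete since $G$ is semisimple. So $\Ad:G\to\Ad(G)$ is a covering onto a closed subgroup of $GL(\kg)$, and $y_k\to\infty$ in $G$ as soon as $\|\Ad(y_k)\|\to\infty$.

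Since $G$ is non-compact, some simple factor of $\kg$ is non-compact. Fix a nonzero $X$ in a Cartan subspace $\ka$ of that factor, with $\ad X$ real diagonalizable, and set $a=\exp X$. Choose an eigenbasis of $\ad X$ on $\kg$. Let $e_+$ be an eigenvector for the largest eigenvalue $\alpha>0$, and $e_-$ one for the smallest eigenvalue $-\alpha$ (eigenvalues are symmetric since $\theta$ maps $\kg_\beta$ to $\kg_{-\beta}$). For $y\in G$, write $c_{ij}(y)$ for the matrix coefficient of $\Ad(y)$ from $e_j$ to $e_i$, where $i,j\in\{+,-\}$. Then the entries of $\Ad(a^{\eps k}ya^{\eps' k})$ include
$$e^{k\alpha(\eps+\eps')}c_{++}(y),\quad e^{-k\alpha(\eps+\eps')}c_{--}(y),\quad e^{k\alpha(\eps-\eps')}c_{+-}(y),\quad e^{-k\alpha(\eps-\eps')}c_{-+}(y).$$
Hence all four sign combinations escape as soon as $c_{++}(y)$, $c_{--}(y)$, $c_{+-}(y)$ and $c_{-+}(y)$ are all nonzero.

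\textbf{Genericity.} For $\omega\neq\omega'$ and $i,j\in\{+,-\}$, consider the real-analytic function $F_{ij}(\ux)=c_{ij}(\ux_{\omega,\omega'})$ on the connected manifold $G^m$. By the identity theorem (the argument of \Cref{avoidance}), its zero set is either nowhere dense or all of $G^m$. So it suffices to show that $F_{ij}$ is not identically zero. Intersecting the resulting finitely many open dense sets then gives an open dense set of tuples satisfying $\SEP$ with $\sigma_k=a^k$.

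\textbf{Main obstacle: nonvanishing.} To see that $F_{ij}\not\equiv0$, use \Cref{Iww'-large}: the image $I_{\omega,\omega'}$ contains either a neighbourhood of $\id$ or the set of all commutators. For a connected semisimple group, the commutator map $G^2\to G$ is a submersion at some point (its differential at a generic pair has image $\kg=[\kg,\kg]$), so its image has nonempty interior. In either case $c_{ij}$ would vanish on a nonempty open subset of $G$, hence on all of $G$ by analyticity and connectedness. That would make $\langle e_i^*,\Ad(g)e_j\rangle=0$ for all $g$. But $e_j$ lies in the non-compact simple ideal, which is $\Ad(G)$-irreducible, so the span of $\Ad(G)e_j$ is that whole ideal and contains $e_i$. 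This is a contradiction.

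The delicate point I expect is precisely this step. One must make sure $e_+$ and $e_-$ lie in the same simple ideal, which holds since $X$ is chosen inside one factor and $\theta$ preserves it. One must also make sure the interior-point argument for the commutator image is valid. If that submersion claim proves awkward, I would fall back on: commutators generate $G$, and the zero set of $c_{ij}$ is a proper analytic subset, so it cannot contain the whole commutator set.
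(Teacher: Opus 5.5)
Your proposal is correct and is essentially the paper's proof. Both use the switching sequence $e^{kX}$ with $X$ in a Cartan subspace, non-vanishing of the four matrix coefficients of $\Ad(\ux_{\omega,\omega'})$ between the top and bottom eigenspaces of $\ad X$ (the paper takes $X$ in the open Weyl chamber, so these are $\kg_{\pm\alpha_{\max}}$), and genericity via \Cref{avoidance} and \Cref{Iww'-large}; your irreducibility argument spells out a non-vanishing step the paper leaves implicit. Discard your fallback, since a proper analytic subset can contain a generating set. The submersion claim you actually need is true: the differential of $(g,h)\mapsto[g,h]$ at $(g,h)$ is onto iff $\Ker(\Ad(g)-\id)\cap\Ker(\Ad(ghg^{-1})-\id)=\{0\}$, and this holds when $g$ and $ghg^{-1}$ are exponentials of a small pair generating $\kg$.
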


\begin{proof}
\bigskip
Note the center $Z(G)$ of $G$ is a discrete subgroup. Set $H=G/Z(G)$. If $H$ is compact, then using that $H$ has finite fundamental group, we must have $Z(G)$ finite, whence $G$ is compact, which is absurd. Therefore $H$ is non-compact. Note $H$ also has trivial center.  By projecting to $H$, we may assume $Z(G)=\{e\}$. 

Fix a Cartan  subspace $\ka\subseteq \kg$. Let $\Phi\subseteq\ka^*$ be the associated system of restricted roots, so that
\[
\kg=\kg_0\oplus\bigoplus_{\alpha\in\Phi}\kg_\alpha,
\qquad
\kg_\alpha
=
\{Y\in\kg:\ [X,Y]_{\kg}=\alpha(X)Y\text{ for every }X\in\ka\}.
\]
For $\alpha\in\Phi$, we write $\pi_{\alpha}$ the projection to $\kg_{\alpha}$ parallel to  $\kg_0\oplus\bigoplus_{\beta\in\Phi,\,\beta\neq \alpha}\kg_{\beta}$. Choose a system  of positive restricted roots $\Phi^+\subseteq\Phi$. Write $\alpha_{\max}\in \Phi^+$ the highest root. Choose a line $\R v_{+}\subseteq \kg_{\alpha_{\max}}$ in the highest weight subspace, and  a line $\R v_{-}\subseteq \kg_{-\alpha_{\max}}$ in the lowest weight subspace.

Let $m\geq 1$. For $\omega\neq \omega'\in \Omega_{m}$, write 
$$O_{\omega,\omega'} :=\{\ux \in G^m\,:\, \pi_{\pm \alpha_{\max}}\Ad(\ux_{\omega,\omega'}) v_{\pm}  \neq 0\}$$
(signs being independent, so there are four conditions on $\ux$). 
As  $G$ is semisimple, \Cref{Iww'-large} implies that $\{\ux_{\omega,\omega'}\,:\, \ux\in G^m\}$ contains a neighborhood of $\id$, so $O_{\omega,\omega'}\neq \emptyset$. 
\Cref{avoidance} then implies that $O_{\omega,\omega'}$ is a dense open subset of $G^m$. The set  $O=\cap_{\omega \neq \omega'} O_{\omega,\omega'}$ is also dense and open. 

Let $\ux\in O$. Fix $X\in \ka^{++}$ the open Weyl chamber associated to $\Phi^+$. Then 
$$\Ad(e^{\pm kX}\ux_{\omega,\omega'} e^{-kX})v_{-} \to \infty \quad \quad \quad \Ad(e^{\pm kX}\ux_{\omega,\omega'} e^{kX})v_{+} \to \infty. $$
Therefore $e^{\pm kX}\ux_{\omega,\omega'} e^{-kX}\to \infty$, so $\SEP$ holds for $\ux$ with switching sequence $\sigma_{k}=e^{kX}$. 
\end{proof}

\subsection{The general case}

We are now able to establish  \Cref{prop-sim-switch} without structural assumption on the Lie group $G$. We first deal with the case where $[G,G]$ acts non trivially on a vector space quotient of a normal Lie subgroup of $G$.

\begin{lemma} \label{case-S/S'-nontriv}
Let $G$ be a connected Lie group. Assume there exists a normal connected Lie subgroup  $S\subseteq G$ such that the representation $\Ad([G,G])\acts S/S'$ is non-trivial. 
Then $G$ satisfies $\SEP$.
\end{lemma}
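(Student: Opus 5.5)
The plan is to use the linear representation $\rho:G\to GL(V)$, where $V:=S/S'$, induced by conjugation; it is well defined because $S'$ is characteristic in $S$, and linear by \Cref{vs-radical}. The switching sequence will be $\sigma_k=s^k$ for a single well-chosen $s\in S$, exactly as in the proof of \Cref{solvable-SEP1}, but now with no assumption on $G$ itself.

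\emph{Step 1 (the generic set).} Fix $m\geq1$ and, for $\omega\neq\omega'\in\Omega_m$, set
\[
O_{\omega,\omega'}:=\{\ux\in G^m:\ \rho(\ux_{\omega,\omega'})\neq \pm\id_V\}.
\]
The map $\ux\mapsto\rho(\ux_{\omega,\omega'})$ is real-analytic, since $\rho$ is a Lie group homomorphism (\Cref{analyt-struct}). The set $\{\pm\id_V\}$ is a closed analytic submanifold of $GL(V)$. So by \Cref{avoidance} it suffices to show $O_{\omega,\omega'}\neq\emptyset$. I will use \Cref{Iww'-large} and split into two cases.
\begin{itemize}
\item If $\omega_{ab}\neq\omega'_{ab}$, then $I_{\omega,\omega'}$ contains a neighbourhood $U$ of $\id$. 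If $\rho(U)\subseteq\{\pm\id_V\}$, then $\rho(U)=\{\id_V\}$ by continuity, so $\rho(G)=\{\id_V\}$ by connectedness. This contradicts the nontriviality of $\rho$ on $[G,G]$.
\item If $\omega_{ab}=\omega'_{ab}$, then $I_{\omega,\omega'}$ contains every commutator $[g,h]$. The set of commutators is the image of the connected space $G^2$ and contains $\id$. So $\rho([g,h])\in\{\pm\id_V\}$ for all $g,h$ forces $\rho([g,h])=\id_V$. Then $\rho([G,G])$ is trivial, again a contradiction.
\end{itemize}
Hence $O:=\bigcap_{\omega\neq\omega'}O_{\omega,\omega'}$ is open and dense.

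\emph{Step 2 (choice of $s$).} Fix $\ux\in O$. For each of the finitely many $y=\ux_{\omega,\omega'}$, the sets $\{v:\rho(y)v=v\}$ and $\{v:\rho(y)v=-v\}$ are proper subspaces of $V$. So I can pick $v\in V$ avoiding all of them, and then pick $s\in S$ with $s\bmod S'=v$. Then $s^k\bmod S'=kv$.

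\emph{Step 3 (escape).} The key computation is that for $y\in G$ and $\eps,\eps'\in\{\pm1\}$, the element $s^{\eps k}\,y\,s^{\eps' k}\,y^{-1}=s^{\eps k}\,\Ad(y)(s^{\eps' k})$ lies in $S$. Its image in $V$ is $k\bigl(\eps v+\eps'\rho(y)v\bigr)$, which tends to infinity because $\rho(y)v\neq\mp v$. If $s^{\eps k}ys^{\eps'k}$ stayed in a compact set along a subsequence, then so would $s^{\eps k}ys^{\eps'k}y^{-1}$, and hence its image in $V$ would stay bounded, a contradiction. This gives $\sigma_k^{\eps}\ux_{\omega,\omega'}\sigma_k^{\eps'}\to\infty$ for all four sign pairs, which is stronger than what $\SEP$ requires.

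\emph{Main obstacle.} The step I expect to be delicate is the last implication: ``bounded in $G$ implies bounded image in $V$''. This needs the inclusion $S\hookrightarrow G$ to be proper, i.e. $S$ closed, and the quotient $S\to S/S'$ to be continuous for the topology of $S$. If $S$ is only immersed, I would first try to replace $S$ by a closed normal connected subgroup with the same property. A natural candidate is $\overline S$, or a closed characteristic subgroup obtained from the structure theory (radical, nilpotent radical), on which $\Ad([G,G])$ still acts nontrivially on the vector space quotient. Alternatively, I would argue directly with the linear action: $\rho(s^{\eps k}ys^{\eps'k})$ need not diverge, so the intrinsic route through closedness seems preferable. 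Checking that this reduction preserves the nontriviality hypothesis is the part I would need to verify carefully.
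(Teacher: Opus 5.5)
Your argument is correct and is essentially the paper's proof. It uses the same generic set $\{\ux : \Ad(\ux_{\omega,\omega'})_{|S/S'}\neq\pm\id_{S/S'}\}$, made open and dense via \Cref{avoidance} and \Cref{Iww'-large}, then lifts to $S$ a sequence in $S/S'$ that escapes under every $\id\pm\Ad(\ux_{\omega,\omega'})$ (your $s^k$ over $kv$ is a concrete choice of the paper's $s_k$ over $v_k$), and concludes with properness of $S\hookrightarrow G$.

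The closedness issue you flag needs no reduction. The paper reads ``Lie subgroup'' as a closed embedded subgroup (see the remark on Cartan's theorem after \Cref{analyt-struct}) and simply asserts that $S\hookrightarrow G$ is proper. It also only invokes the lemma for closed $S$, namely $\overline{[R_S,R_S]}$ and $R_S$ in the proof of \Cref{prop-sim-switch}.
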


\begin{proof}
Let $m\geq 2$. For $\omega\neq \omega'\in \Omega_{m}$, write 
$$O_{\omega,\omega'} :=\{\ux \in G^m\,:\, \Ad(\ux_{\omega, \omega'})_{|S/S'} \neq \pm \id_{S/S'} \}.$$
Lemmas \ref{avoidance}, \ref{Iww'-large} and the assumption that  $\Ad([G,G])\acts S/S'$ is non-trivial together imply that $O_{\omega,\omega'}$ is open and dense in $G^m$. The set  $O=\cap_{\omega \neq \omega'} O_{\omega,\omega'}$ is thus also open and dense. Let $\ux\in O$. Let $(v_{k})\in (S/S')^{\N}$ be a sequence such that for all $\omega\neq \omega'\in \Omega_{m}$, 
$$\|( \id_{S/S'}\pm \Ad(\ux_{\omega,\omega'})_{|S/S'})(v_{k})\|\to +\infty.$$
Lifting $v_{k}$ to $s_{k}\in S$, we find that for all $\eps, \eps'\in \{-1,1\}$, we obtain that the sequence
$(s^{\eps}_{k}\ux_{\omega,\omega'} s_{k}^{\eps'} \ux_{\omega,\omega'}^{-1} )_{k} $ 
leaves all compact sets of $S$. As the injection map $S\hookrightarrow G$ is proper and $\ux_{\omega,\omega'}$ is bounded (depending on $\ux$ only),  we get that for each 
$\omega\neq \omega'\in \Omega_{m}$, the sequence $s^{\eps}_{k}\ux_{\omega,\omega'}s^{\eps'}_{k}$ goes to infinity in $G$.
\end{proof}

To deal with situations where $[G,G]$ acts trivially on all vector space quotients $S/S'$, we need the next preparatory lemma. 

\begin{lemma} \label{KactsN}
Let $N$ be a simply connected nilpotent Lie group. 
\begin{itemize}
\item If  $F\subseteq N$ is a  subgroup such that $N=F [N,N]$ then $F=N$. 
\item If $K$ is a compact   Lie group acting on $N$ by automorphisms, and the quotient action $K\acts N/[N,N]$ is trivial, then $K\acts N$ is trivial.
\end{itemize}
\end{lemma}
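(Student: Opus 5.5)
Both items will be proved by induction along the lower central series of $N$. By \Cref{fact-nilp-spc}, each $N^{[k]}$ is a closed connected subgroup, identified with the subalgebra $\kn^{[k]}$. The quotients $N^{[k]}/N^{[k+1]}$ are vector spaces, and they are simply connected because $\kn^{[k+1]}$ is a subspace of $\kn^{[k]}$.

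\emph{First item.} I would show by induction on $k$ that $N=F\,N^{[k]}$ for all $k\geq 2$. The case $k=2$ is the hypothesis. Suppose $N=FN^{[k]}$. The commutator map $N\times N^{[k-1]}\to N^{[k]}/N^{[k+1]}$ is bilinear modulo $N^{[k+1]}$ and depends only on the classes in $N/[N,N]$ and $N^{[k-1]}/N^{[k]}$. Since $N=F[N,N]$, the classes of $F$ generate $N/[N,N]$. An induction on $j$ then shows that the classes of $j$-fold commutators of elements of $F$ generate $N^{[j]}/N^{[j+1]}$, here as an abstract group generated by the images of iterated commutators. These images lie in the images of the subgroups $F\cap N^{[j]}$.

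The hardest point is that \emph{generated as a group} must mean all of $N^{[j]}/N^{[j+1]}$. This holds because the Lie algebra $\kn^{[j]}/\kn^{[j+1]}$ is spanned by brackets $[X,Y]$, and every such element is literally a group commutator modulo $N^{[j+1]}$, using $[\exp X,\exp tY]\equiv \exp(t[X,Y])$ together with real scalars and bilinearity. With this, $N^{[k]}\subseteq (F\cap N^{[k]})N^{[k+1]}$, and so $N=FN^{[k+1]}$. Since $N^{[s+1]}$ is trivial, $F=N$.

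\emph{Second item.} Identify $N$ with $\kn$, so that $K$ acts linearly on $\kn$ by Lie algebra automorphisms. By compactness, $K$ preserves an inner product. Each $\kn^{[k]}$ is $K$-invariant, so $\kn=\bigoplus_k V_k$ with $V_k=\kn^{[k]}\cap(\kn^{[k+1]})^\perp$ a $K$-invariant complement. The hypothesis says that $K$ acts trivially on $\kn/[\kn,\kn]$, hence trivially on $V_1$. Let $\kk_0\subseteq \kn$ be the Lie subalgebra generated by $V_1$. Then $\kk_0+[\kn,\kn]=\kn$. The Lie-algebra analogue of the first item, with the same induction or via the first item applied to $F=\exp \kk_0$, gives $\kk_0=\kn$. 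Each $k\in K$ is a Lie algebra automorphism fixing $V_1$ pointwise, so it fixes the subalgebra generated by $V_1$, which is $\kn$. Hence $K$ acts trivially on $\kn$, and therefore on $N$.

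Compactness is used only to obtain the invariant complement $V_1$. Without it, an automorphism acting trivially on $\kn/[\kn,\kn]$ need only move $V_1$ by elements of $[\kn,\kn]$, and then it need not be trivial, as unipotent automorphisms show. I therefore expect the first item's generation argument to be the main obstacle, and the second item follows from it formally.
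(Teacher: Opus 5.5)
Your proposal is correct and follows essentially the paper's route. The first item goes by multilinearity of iterated commutators along the lower central series: you induct upward on $N=F\,N^{[k]}$, whereas the paper shows $F\supseteq N^{[s]}$ and passes to $N/N^{[s]}$. The second item uses compactness to produce $K$-fixed lifts of $N/[N,N]$, via your invariant complement $V_1$ rather than the paper's Haar average $\int_K k.x\,\mathrm{d}k$, and then applies the first item to the fixed points.

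The step you flag as the hardest is in fact automatic. $N^{[j]}=[N,N^{[j-1]}]$ is by definition generated by commutators, and the preliminaries record that it equals $\exp\kn^{[j]}$; your BCH verification is also fine.
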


%\noindent\emph{Remark}. It is important that the group be nilpotent. For a solvable group, the claim is false. For example, $S=\SO_{2}\ltimes \R^2$ satisfies $[S,S]=\R^2$, so $F=\SO_{2}$ has surjective projection to $S/[S,S]$, but $F\neq S$.  To contradict the second claim, consider the action of $\SO_{2}$ by conjugation on $S$. 

\begin{proof}

We may  identify $N$ with its Lie algebra via the exponential map (\Cref{fact-nilp-spc}). This way $N$ also has a vector space structure.

Let us check the first claim. 
Note we have $N=FN^{[2]}=F+N^{[2]}$ where the first product refers to the group structure, and the addition refers to the vector space structure. Let $s$ be the step of $N$. Note that every $s$-bracket $\ad(x_{1})\ad(x_{2}) \dots \ad(x_{s-1}) x_{s}$ is invariant under replacing any $x_{i}$ by $x_{i}+y$ where $y\in N^{[2]}$. By assumption, every $x\in N$ is of the form $x_{F}+y$ where $x_{F}\in F$, $y\in N^{[2]}$. As the $s$-bracket  can be expressed via intertwined commutators (for the group structure), we deduce that  $F$ contains $N^{[s]}$. We are thus reduced to $N/N^{[s]}$, whence the proof by induction. 

Let us deal with the second claim. Write $\dd k$ the Haar probability measure on $K$. For every $x\in N$, the average $\int_{K}k.x \dd k$ is  $K$-invariant, and it coincides with $x$ modulo $[N,N]$. Hence the group of $K$-fixed points $F:=\{y \in N\,:\, K.y=y\}$ satisfies $F  [N,N]=N$. It follows from the previous paragraph that $F=N$, i.e. the action $K\acts N$ is trivial. 

\end{proof}

We finally conclude the proof of \Cref{prop-sim-switch}.

\begin{proof}[Proof of \Cref{prop-sim-switch}]
We only need to check that $b) \implies c)$. 
Denote by $R$ the  radical of $G$. By \Cref{semisimple-case}, we may assume that $G/R$ is compact. Let $S=\overline{[G,G]}$, write $R_{S}$ the  radical of $S$. In this paragraph, we show that $S/R_{S}$ is compact. Observe $S/(R\cap S) \simeq [G/R, G/R]=G/R$ is compact. Moreover, denoting by $(R\cap S)^o$ the identity component of $R\cap S$, we have on the one hand that the covering $S/(R\cap S)^o\rightarrow  S/(R\cap S)$ has finite fibers (see Weyl's theorem from \Cref{Weyl}), so $S/(R\cap S)^o$ is compact, and on the other hand, $(R\cap S)^o\subseteq R_{S}$ by inspecting definitions. Hence $S/R_{S}$ is indeed compact. 
By \Cref{Levi-G/R-compact}, we can then write $S=K_{S}R_{S}$ where $K_{S}\subseteq S$ is a compact semisimple connected Lie subgroup with $K_{S}\cap R_{S}$ finite. 

Let $V$ be the largest vector space quotient of $\overline{[R_{S},R_{S}]}$.   If the representation $\Ad(R_{S})\acts V$ is non-trivial, then \Cref{case-S/S'-nontriv} implies $\SEP$, i.e. $c)$ holds. Therefore we may assume this representation trivial. By \Cref{solv->nil}, we then have that $R_{S}$ is nilpotent.

Let $W$ be the largest vector space quotient of $R_{S}$. If the representation $\Ad(S)\acts W$ is non-trivial, then \Cref{case-S/S'-nontriv} again  implies  $c)$. From now on we assume that $\Ad(S)\acts W$ is trivial.  Up to quotienting by its largest compact subgroup (which is normal in $G$), we may assume that $R_{S}$ is simply connected, see \Cref{max-comp-in-nilp}. The triviality of $\Ad(K_{S})\acts W$ then yields via \Cref{KactsN} that $\Ad(K_{S})\acts R_{S}$ is trivial. This means that $K_{S}$ and $R_{S}$ commute. In view of the uniqueness statement in \Cref{Levi-G/R-compact}, we deduce  that $K_{S}$ is uniquely defined in $S$, therefore normal in $G$. Now the Lie group $G/K_{S}$ satisfies that $[G/K_{S}, G/K_{S}]$ is solvable, whence $G/K_{S}$ is also solvable. As $G/K_{S}$ still satisfies the assumption $b)$,  \Cref{solv-SEP} dealing with the solvable case implies  $c)$ for $G/K_{S}$, whence for $G$. 

\end{proof}

 %%%%%%%%%%%%%%%%%%%%%%%%%%%%%%%%%%%%%%%%%%%%%%%%%%%%%%%%%%%%%%%%%%%%%%%%%%%%%%%%%%%%%%% %%%%%%%%%%%%%%%%%%%%%%%%%%%%%%%%%%%%%%%%%%%%%%%%%%%%%%%%%%%%%%%%%%%%%%%%%%%%%%%%%%%%%%% %%%%%%%%%%%%%%%%%%%%%%%%%%%%%%%%%%%%%%%%%%%%%%%%%%%%%%%%%%%%%%%%%%%%%%%%%%%%%%%%%%%%%%% %%%%%%%%%%%%%%%%%%%%%%%%%%%%%%%%%%%%%%%%%%%%%%%%%%%%%%%%%%%%%%%%%%%%%%%%%%%%%%%%%%%%%%% %%%%%%%%%%%%%%%%%%%%%%%%%%%%%%%%%%%%%%%%%%%%%%%%%%%%%%%%%%%%%%%%%%%%%%%%%%%%%%%%%%%%%%% %%%%%%%%%%%%%%%%%%%%%%%%%%%%%%%%%%%%%%%%%%%%%%%%%%%%%%%%%%%%%%%%%%%%%%%%%%%%%%%%%%%%%%% %%%%%%%%%%%%%%%%%%%%%%%%%%%%%%%%%%%%%%%%%%%%%%%%%%%%%%%%%%%%%%%%%%%%%%%%%%%%%%%%%%%%%%% %%%%%%%%%%%%%%%%%%%%%%%%%%%%%%%%%%%%%%%%%%%%%%%%%%%%%%%%%%%%%%%%%%%%%%%%%%%%%%%%%%%%%%% %%%%%%%%%%%%%%%%%%%%%%%%%%%%%%%%%%%%%%%%%%%%%%%%%%%%%%%%%%%%%%%%%%%%%%%%%%%%%%%%%%%%%%% %%%%%%%%%%%%%%%%%%%%%%%%%%%%%%%%%%%%%%%%%%%%%%%%%%%%%%%%%%%%%%%%%%%%%%%%%%%%%%%%%%%%%%% %%%%%%%%%%%%%%%%%%%%%%%%%%%%%%%%%%%%%%%%%%%%%%%%%%%%%%%%%%%%%%%%%%%%%%%%%%%%%%%%%%%%%%% %%%%%%%%%%%%%%%%%%%%%%%%%%%%%%%%%%%%%%%%%%%%%%%%%%%%%%%%%%%%%%%%%%%%%%%%%%%%%%%%%%%%%%%
\section{Necessary condition to be Choquet-Deny}\label{Sec-onlyif}

We prove the ``only if'' part of \Cref{main-thm}, namely:

\begin{proposition}\label{notCD-sufficient-Liegrp}
Let \(G\) be a connected Lie group such that there exists \(x\in[G,G]\)
with \(\Ad(G)x\) unbounded. Then there exists a symmetric adapted
probability measure \(\mu\) on \(G\) such that \((G,\mu)\) is not Liouville.
\end{proposition}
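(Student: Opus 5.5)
First, reduce to the case where $G$ itself satisfies $\SEP$. By \Cref{prop-sim-switch}, $b)\implies c)$, so some quotient $\bar G=G/H$ satisfies $\SEP$. If $\bar\mu$ is a symmetric adapted non-Liouville measure on $\bar G$, we lift it. Take a symmetric adapted measure $\nu$ on $G$ (for instance a symmetric probability measure equivalent to Haar restricted to a symmetric neighborhood of $\id$). Then take a measurable symmetric lift $\mu_0$ of $\bar\mu$, for instance via a Borel section $s$ together with the symmetrization $\frac12(s_*\bar\mu+(s_*\bar\mu)^{\vee})$. We cannot simply mix $\nu$ in, since that would change $\bar\mu$. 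Instead, we ask that the construction on $\bar G$ produce a measure with enough freedom that a small spread-out perturbation is allowed. The cleaner route is to run the construction below directly on $G$, using the switching sequences from $\bar G$ lifted arbitrarily to $G$. Escape to infinity in $\bar G$ implies escape in $G$, and convergence to $\id$ in $\bar G$ means convergence into $H$; the latter only matters for the non-Liouville estimate, which we verify in $\bar G$ via the pullback of a harmonic function. So we work with a pair $(G,\bar G)$ where $\SEP$ holds downstairs and adaptedness is required upstairs. Since $\SEP$ holds for a dense set of tuples, density of the generated subgroup can be arranged upstairs as well.

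\emph{Construction of a fractal measure.} Fix an enumeration of a countable dense set and build inductively tuples $\ux^{(n)}$ with switching sequences. Let
\[
\mu=\sum_{n\geq1} p_n\,\tfrac12(\delta_{g_n}+\delta_{g_n^{-1}}),
\]
where the $g_n$ are chosen inductively. At stage $n$, the elements $g_1,\dots,g_{n-1}$ are fixed. We perturb a target point $h_n$ from the dense set to a nearby tuple satisfying $\SEP$ jointly with the earlier $g_i$: apply the density in \Cref{def-SEP} to $m=n$ coordinates. The generators are then placed at scale $\sigma_{k_n}^{\pm1}$, for example $g_n=\sigma_{k_n} x_n\sigma_{k_n}^{\pm1}$ in the spirit of \cite{FHTVF}, with $k_n$ huge. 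Adaptedness comes from keeping the dense set inside the closed group generated (using $\sigma$ and its conjugates). The weights $p_n$ decrease very fast. The point of $\SEP$ is this: any two distinct reduced signed words $\omega\neq\omega'$ in the first $n$ generators, each letter used at most once, give products $\ux_\omega,\ux_{\omega'}$ whose ratio conjugated or multiplied by $\sigma^{\pm1}$ escapes to infinity (or goes to $\id$ in the allowed sign case). Hence different histories of the walk up to the first use of the new generator produce separated positions at the new scale.

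\emph{Non-Liouville via Foguel.} By Foguel's criterion \cite{Foguel75}, it suffices to find $g$ in the support, or an element $g$, with $\|\mu^{*n}-\delta_g*\mu^{*n}\|_{TV}\not\to0$. The step I expect to be hardest is the estimate that the walk retains memory. The plan is to choose $p_n$ so small that, up to time $T_n\approx 1/(n^2p_{n+1})$, with high probability no generator of index $>n$ is used and each generator of index $\le n$ is used boundedly often. Also, at times of order $T_n$, the new generator is used at most once with high probability. Borel–Cantelli then shows that, along the sequence of times $T_n$, the position of the walk decomposes as a finite word in the earlier generators times at most one huge factor $\sigma_{k_n}^{\pm1}$. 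The finite words form only finitely many patterns (up to an error probability, given a finite window with letters used at most once after regrouping, which is where the signed tuples $\Omega_m$ enter). By $\SEP$, taking $k_n$ large enough, these patterns lie in pairwise disjoint regions. A left translate by $g_1$ then shifts the pattern distribution in a way that cannot be matched, which gives total variation distance bounded below uniformly in $n$.

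The delicate points are twofold. First, we need the $(-1,1)$ sign case, where $\sigma^{-1}y\sigma\to\id$ is permitted, to be harmless; I would handle it by arranging, via the Remark ($\sigma_k^2\to\infty$), that such ratios never need to be distinguished. Second, we need all bookkeeping to hold simultaneously for all $n$, which we get by choosing $k_n$ after $p_1,\dots,p_{n}$ and $p_{n+1}$ after $k_n$.
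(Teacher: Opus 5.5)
\textbf{There is a genuine gap, at exactly the step you flag as hardest.} Your measure $\mu=\sum_n p_n\frac12(\delta_{g_n}+\delta_{g_n^{-1}})$ is purely atomic. So before time $T_n$ each early generator $g_i$ is used about $p_iT_n\to\infty$ times. Your claim that generators of index $\le n$ are used boundedly often is therefore false, and the position of the walk is a long word with many repeated letters. But $\SEP$ only controls $\ux_{\omega,\omega'}$ with $\omega,\omega'\in\Omega_m$, i.e. words in which each letter occurs at most once. Distinct words with repeated letters may coincide identically (for instance through the group laws of a nilpotent group), and letters cannot be ``regrouped'' in a non-abelian group. So there is no reduction to finitely many separated patterns.

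This cannot be repaired by bookkeeping: when $G$ is nilpotent, \emph{every} countably supported adapted $\mu$ is Liouville. Indeed, the subgroup $\Gamma$ generated by $\supp\mu$ is a countable nilpotent group. It has no ICC quotient, so it is Choquet-Deny as a discrete group \cite{Jaw04}. For a.e. $x$, a bounded $\mu$-harmonic $f$ restricts to a bounded $\mu$-harmonic function on $x\Gamma$ (only countably many null sets are involved). Hence $f$ is a.e. right $\Gamma$-invariant, and therefore a.e. constant since $\Gamma$ is dense. Your construction thus already fails for the universal cover in the step-$3$ Example.

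Relatedly, your form of Foguel's criterion is not valid for non-spread-out measures. For nilpotent $G$, atomic $\mu$ and $g\notin\Gamma$, we have $\|\mu^{*n}-\delta_g*\mu^{*n}\|=2$ for all $n$, although $\mu$ is Liouville. One must compare absolutely continuous initial laws, as in \Cref{CD-vs-mixing}. This is why the paper needs the macroscopic separation \eqref{eq-separation-mn}, robust under left smoothing by $\Haar_{B_{1/2}}$, for the lazy walk $\tmu$.

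\textbf{The missing idea is the paper's non-atomic fractal measure.} Letters are $g_{(k,\ui)}$, indexed by infinite binary strings $\ui$ and obtained by successive $r_l$-perturbations of $\sigma_k$. $\SEP$ is imposed on each finite tuple $(g_a)_{a\in\cW^*_{l}}$, and the level $k$ is drawn with weight proportional to $k^{-5/4}$. By \cite[Lemma~2.6]{FHTVF}, with probability at least $3/4$, for every $P>P_0$ the largest level $M$ among the first $P$ letters occurs exactly once, and the projections $\pi_{q_M-1}(a_1),\dots,\pi_{q_M-1}(a_P)$ are pairwise distinct. So the letters of level $<M$ are $2r_{q_M}$-perturbations of \emph{distinct} entries of $(g_a)_{a\in\cW^*_{q_M-1}}$, and $\SEP$ genuinely applies. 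Here $\sigma_M$ is chosen after this tuple, and $r_{q_M}$ after $\sigma_M$, so that the dichotomy ``within $c_M$ of $\id$, or very far'' survives the perturbation.

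Split both products as $xg^{\eps}y$ and $x'g'^{\eps'}y'$ around the unique top letter. Right-invariance of $d$ reduces a failure of \eqref{eq-separation-mn} to $d(hx,x'g'^{\eps'}y'y^{-1}g^{-\eps})\le 1$. The far alternative is excluded, the near one costs at most $2^{-M}$, and one iterates on the prefixes. This is also why the alternative $\sigma^{-1}y\sigma\to\id$ is harmless. Diverging squares are needed only when $y$ and $y'$ come from the same word.

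Finally, your reduction to a quotient $G/H$ is simpler than you suggest. Mixing in a fully supported symmetric measure on $H$ only makes the projected walk lazy, which preserves its bounded harmonic functions and gives adaptedness upstairs (\Cref{lift-symmetric-witness}).
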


%
%\begin{proof}
%Let \(N=\ker\pi\). Replacing the measure on \(Q\) by its lazy version, we may
%suppose that it gives positive mass to the identity; this does not change its
%bounded harmonic functions. Denote the resulting symmetric adapted
%non-Liouville measure by \(\bar\mu\). Choose a Borel section \(\sigma:Q\to G\) with \(\sigma(\id)=\id\), and set \(\nu=\sigma_*\bar\mu\). Choose a symmetric probability measure \(\kappa\) on \(N\), with an atom at the identity, whose support generates a dense subgroup of \(N\), and put \[  \mu=\frac12\bigl(\kappa*\nu*\kappa+\kappa*\check\nu*\kappa\bigr), \] where \(\check\nu\) is the image of \(\nu\) under inversion. Then \(\mu\) is symmetric and \(\pi_*\mu=\bar\mu\). Since both \(\kappa\) and \(\nu\) have an atom at the identity, the support of \(\mu\) generates a dense subgroup of \(G\). Finally, the pullback by \(\pi\) of a non-constant bounded \(\bar\mu\)-harmonic function is a non-constant bounded \(\mu\)-harmonic function. \end{proof}

The next lemma claims that the triviality of all harmonic bounded functions implies   that random walks on $G$ eventually ``forget'' their starting point. 

\begin{lemma}\label{CD-vs-mixing}
Let $G$ be a Lie group, let $\mu$ be a probability measure on $G$ with $\mu(\id)>0$.  If the pair $(G, \mu)$ is Liouville then for all \emph{absolutely continuous} probability measures $\nu_{1}, \nu_{2}$ on $G$, we have
$$\|\nu_{1}*\mu^{*n} -\nu_{2}*\mu^{*n}\| \rightarrow 0$$
where $\|\cdot \|$ stands for the total variation norm.
\end{lemma}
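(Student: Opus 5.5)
The plan is to derive the statement from the zero-two law and the triviality of the tail $\sigma$-algebra. Both are standard for Liouville measures with an atom at the identity, in the spirit of Derriennic and Kaimanovich--Vershik. The argument runs in four steps.

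\emph{Step 1: the tail is trivial.} Since $\mu(\id)>0$, the measure $\mu$ is aperiodic in the strongest sense. For such measures the Poisson boundary coincides with the tail boundary (Derriennic's zero-two law / Kaimanovich's theorem). Concretely, bounded $\mu$-harmonic functions are in bijection with bounded invariant functions of the space-time chain. A tail event that is not invariant would produce a time-periodic structure, and the atom at $\id$ rules this out. I would therefore first reduce the statement to the following: the tail $\sigma$-algebra of the right random walk $x b_1\cdots b_n$, started from an absolutely continuous initial law, is trivial modulo null sets. This reduction is where $\mu(\id)>0$ is used, and it is the step I expect to be the main technical point. The cleanest route is Derriennic's argument: for lazy measures, $\|\mu^{*n}-\mu^{*(n+1)}\|\to0$, which identifies the tail and invariant $\sigma$-algebras.

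\emph{Step 2: invariant events are trivial.} Invariant events correspond to bounded harmonic functions $h$ on $G$ via $h(x)=\bP_x(A)$. Liouville forces each such $h$ to be a.e. constant. Hence, under the initial law $\nu=\frac12(\nu_1+\nu_2)$, every tail event has probability $0$ or $1$. Absolute continuity of $\nu$ is what makes ``almost everywhere'' statements about $h$ meaningful along the chain.

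\emph{Step 3: conclude by the standard coupling/tail argument.} Let $\rho_i=d\nu_i/d\nu$. By tail triviality,
$$\E_\nu\bigl(\rho_i(X_0)\,\big|\,\mathcal F_{\geq n}\bigr)\to \E_\nu(\rho_i(X_0))=1$$
in $L^1$, by reverse martingale convergence. The law of $X_n$ under $\nu_i$ has density $\E_\nu(\rho_i(X_0)\mid X_n)$ with respect to the law of $X_n$ under $\nu$. By the Markov property this density equals $\E_\nu(\rho_i(X_0)\mid \mathcal F_{\geq n})$. It follows that
$$\|\nu_i*\mu^{*n}-\nu*\mu^{*n}\|\le \E_\nu\bigl|\E_\nu(\rho_i(X_0)\mid\mathcal F_{\geq n})-1\bigr|\to0.$$
The triangle inequality then gives the claim.
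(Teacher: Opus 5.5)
Your argument is correct, and it is more self-contained than the paper's. The paper gives no argument of its own: it invokes Foguel's convolution $0$--$2$ law \cite[Theorem~2]{Foguel75} as a black box and mentions Derriennic and Kaimanovich only as alternative proofs. Your proposal is essentially that alternative path-space proof written out:
\begin{itemize}
\item laziness gives $\|\mu^{*n}-\mu^{*(n+1)}\|\to0$, which collapses the tail $\sigma$-algebra onto the invariant one;
\item the Liouville property, in the paper's Haar-a.e.\ sense, trivializes the invariant $\sigma$-algebra under any absolutely continuous initial law;
\item reverse martingale convergence turns tail triviality into total-variation mixing.
\end{itemize}

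The citation buys brevity. Your route shows exactly where each hypothesis enters. The condition $\mu(\id)>0$ is used only to get $\|\mu^{*n}-\mu^{*(n+1)}\|\to0$; for this, write $\mu=\alpha\delta_{\id}+(1-\alpha)\lambda$ and compare the Binomial$(n,1-\alpha)$ and Binomial$(n+1,1-\alpha)$ laws. Absolute continuity of the $\nu_i$ is used only to ensure that $\nu*\mu^{*n}$ is absolutely continuous. That is what lets a Haar-a.e.\ constant harmonic function be a.s.\ constant along the chain in Step~2, and you should state it explicitly there.

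Step~1 should be stated more precisely than your heuristic about ``time-periodic structure''. Your sentence pairing harmonic functions with invariant functions of the space-time chain is also slightly off. The correct correspondence is that a tail event $A=\theta^{-n}A_n$ gives a bounded space-time harmonic function $u(n,x)=\bP_x(A_n)$. For every $k\geq1$,
\[
|u(n,x)-u(n+1,x)|=\Bigl|\int u(n+k,y)\,\dd\bigl(\delta_x*\mu^{*k}-\delta_x*\mu^{*(k-1)}\bigr)(y)\Bigr|\le \|u\|_\infty\,\|\mu^{*k}-\mu^{*(k-1)}\|.
\]
Hence $u$ does not depend on $n$, $u(0,\cdot)$ is harmonic, and $A$ is invariant modulo null sets. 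That one line is the whole of Step~1.
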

\begin{proof}
This  is an application of Foguel's convolution
$0$--$2$ law \cite[Theorem~2]{Foguel75}.
For extensions and alternative proofs of $0$--$2$ laws in the
general framework of Markov processes, see Derriennic
\cite{Derriennic1976} and Kaimanovich
\cite{Kaimanovich1992}.
\end{proof} 

To show that such $\mu$ is not Liouville, it will thus be enough  to check that for some $h\in G$, the distributions $\delta_{h}*\mu^{*n}$ and $\mu^{*n}$ cannot coincide, even up to some small (but macroscopic) left perturbation. We fix a \emph{right-invariant} Riemannian metric on $G$,  we write $d(\cdot, \cdot)$ the induced distance function, and $B_{r}$ the ball of radius $r>0$ centered at $\id$. 

%Note that in the above, we may restrict to $\nu_{1}, \nu_{2}$ of the form $\nu_{i}=\Haar_{B_{r}}*\delta_{h_{i}}$ where $\Haar_{B_{r}}$ is the right-invariant Haar measure restricted to the ball of radius $r>0$ centered at $\id$, and normalized to have mass $1$. In this case $\nu_{i}*\mu^{*n} $ can be seen as a mollification of $\delta_{h_{i}}*\mu^{*n}$ at scale $r$. 

%

\begin{proof}[Proof of \Cref{notCD-sufficient-Liegrp}]
It is enough to construct a non-Liouville symmetric adapted measure on a \emph{quotient} of $G$ (see \Cref{lift-symmetric-witness} below for details on this reduction). Therefore, by \Cref{prop-sim-switch}, we may assume that \emph{$G$ satisfies $\SEP$}.

In view of \Cref{CD-vs-mixing}, it is sufficient to construct a symmetric adapted probability measure $\mu$ on $G$  such that, writing $\tmu=\frac{1}{2}(\delta_{\id}+\mu)$, we have for  some $h\in G$,
\begin{equation}\label{eq-separation}
\|\Haar_{B_{1/2}}*\delta_{h}*\tmu^{*n} -\Haar_{B_{1/2}}*\tmu^{*n}\| \not\rightarrow 0.
\end{equation}

We will construct $\mu$ as a fractal measure, encoded by a set of infinite words, each pointing at a certain element in $G$. 
Given such a word, the associated element in $G$ is the limit of  elements corresponding to finitary prefixes of the word. In this paragraph we define those prefixes, and how they correspond to a group element. 
Let us consider some parameters, given by sequences $(r_{k})\in (0, 1)^{\N^*}$ with $r_{1}=1/2$,  $(q_{k})\in \N^{\N^*}$  with $q_{k+1}>q_{k}$ and $q_{1}=1$, and $(\sigma_{k})\in G^{\N^*}$. Further conditions on them will be specified later. 
 For $l\in \N $, set 
$$\cW_{l}:=\{(k, \ui) \,:\, k\geq 1, \, \ui \in \{1, 2\}^{\llbracket q_{k}, l \rrbracket}\}$$
where $\llbracket q_{k}, l \rrbracket :=[q_{k}, l ] \cap \N$, and
in the case $l<q_{k}$, we interpret the condition $\ui \in \{1, 2\}^{\llbracket q_{k}, l \rrbracket}$ as $\ui=\emptyset$ (the empty word). 
In particular,  $\cW_{0}=\N^*\times \{\emptyset\}$.  For $l\geq 1$, set $\cW^*_{l}=\cW_{l} \smallsetminus \cW_{0}$. We then have $$\cW_{<\infty}:=\cup_{l\in \N} \cW_{l}= \cW_{0} \bigsqcup \sqcup_{l\geq 1} \cW^*_{l}.$$ 
We define  a collection of group elements $(g_{k, \ui})_{(k,\ui) \in \cW_{<\infty}}$. For subscripts in $\cW_{0}$, we set $g_{k, \emptyset}=\sigma_{k}$. We then define the  $g_{k, \ui}$'s indexed by each $\cW^*_{l}$ using induction on $l$.  Suppose the elements $g_{k, \ui}$ have been defined for all $(k, \ui)\in \cW^*_{l}$ (this condition is empty if $l=0$). 
Let $(k, \ui)\in \cW^*_{l+1}$. There exists $(k , \ui')\in \cW_{l}$ such that $(k , \ui)=(k , \ui' \eps)$ where $\eps\in \{1, 2\}$. We then choose $g_{k, \ui}\in B_{r_{l+1}}(g_{k, \ui' })$. As $G$ is assumed to satisfy $\SEP$, we may impose that this new collection $(g_{k, \ui})_{(k, \ui)\in \cW^*_{l+1}}$ satisfies $\SEP$, for some switching sequence with diverging squares (see remark after \Cref{def-SEP}). Note such choice allows to further restrict $g_{k, \ui'1}, g_{k, \ui'2}$ to any prescribed open subsets of $B_{r_{l+1}}(g_{k, \ui' })$. 
In particular, considering $k=1$, we may also impose that, for every $m\geq1$,  the set of multiplicative differences $\{g_{1,\ui'2}\,(g_{1,\ui'1})^{-1} \,:\, \ui'\in \{1,2\}^{\llbracket1, m\rrbracket}\}$ spans a subgroup of $G$ which is $\eta_{m}$-dense in the ball $B_{1}$, for some sequence $(\eta_{m})_{m}$ going to $0$ as $m$ goes to infinity. This second requirement will ensure the symmetric measure $\mu$ we construct has its support spanning a dense subgroup.

We now define the measure $\mu$. Set 
$$ \cW:=\{(k, \ui) \,:\, k\geq 1, \, \ui \in \{1, 2\}^{\N_{ \geq q_{k}}}\},$$
which we may see as the limit of $(\cW_{l})_{l}$, and
 set $\pi_{l}: \cW\rightarrow \cW_{l}$ the natural projection map (restricting the coordinate $\ui$ to the interval  $\llbracket q_{k},l\rrbracket$). 
Assuming $r_{l+1}\leq r_{l}/2$, it makes sense to define limit points $(g_{(k, \ui)})_{(k, \ui)\in \cW}$  by setting $g_{(k, \ui)}=\lim_{l} g_{\pi_{l}(k, \ui)}$. Indeed, for any $k, \ui$, the sequence $(g_{\pi_{l}(k, \ui)})_{l\geq q_{k}}$ is Cauchy.
We then put on $\cW$ the probability measure 
$$\bP= c\sum_{k\geq 1} k^{-5/4} \delta_{k}\otimes \left(\frac{\delta_{1}+\delta_{2}}{2}\right)^{\otimes \N_{ \geq q_{k}}}$$
 where $c>0$ is chosen to guarantee mass $1$. We let $\mu_{0}$ be the distribution of $g_{(k, \ui)}$ as $(k, \ui)$ varies with law $\bP$. Note that for every $m\geq1$, every $\ui'\in \{1,2\}^{\llbracket1, m\rrbracket}$, the support of $\mu_{0}$ contains elements at distance at most $2r_{m+2}$ from $g_{1,\ui'1}$, and similarly for $g_{1,\ui'2}$.  Provided $r_{m+2}$ is chosen small enough depending on $\{g_{1,\ui'2}\,(g_{1,\ui'1})^{-1} \,:\, \ui'\in \{1,2\}^{\llbracket1, m\rrbracket}\}$, the  almost-density requirement from the previous paragraph yields that $\supp \mu_{0}$ generates a dense subgroup.
Finally, we set 
 $$\mu=\frac{1}{2}(\mu_{0}+ \text{inv}_{\star}{\mu}_{0})$$
  where $\text{inv}:g\mapsto g^{-1}$ denotes the inversion map.

\bigskip

We now check \eqref{eq-separation} for some $h$, up to further adjusting the parameters $(r_{k}, q_{k}, \sigma_{k})$. Given $a\in \cW$, set $k_{a}\in \N^*$ its first coordinate, so $a\in \{k_{a}\}\times \{1, 2\}^{\N_{ \geq q_{k_{a}}}}$. For a string $\ua=(a_{p})_{p\geq 1}\in  \cW^{\N^*}$, write $M_{\ua, P}=\max_{1\leq p\leq P} k_{a_{p}}$. 
Let $P_{0}, M_{0}\geq 1$ be  parameters and set 
$E=E((q_{k}), P_{0}, M_{0})$ the set of instructions $\ua=(a_{p})_{p\geq 1}\in  \cW^{\N^*}$ satisfying
\begin{itemize}
\item $M_{\ua, P_{0}}\leq M_{0}$.
\item For every $P> P_{0}$,  the maximum $M_{\ua, P}$ of the tuple $(k_{a_{1}}, \dots, k_{a_{P}})$ appears exactly once and satisfies $M_{\ua, P}\geq P^2$; moreover the projections $\pi_{q_{M_{\ua, P}}-1}(a_{1}), \dots, \pi_{q_{M_{\ua, P}}-1}(a_{P})$ are pairwise distinct. 
\end{itemize}  
 By \cite[Lemma 2.6]{FHTVF},    we may choose absolute constants $(q_{k})$ and $P_{0}, M_{0}$ so that $\bP^{\otimes \N^*}(E)\geq 3/4$. From now on, the sequence $(q_{k})$ will not be  specified further, all conditions on it have already been imposed.
We claim that for suitable $h$ and $(r_{k}, \sigma_{k})_{k}$, for every $\ua, \ua'\in E$, every $s,t\in \N$, every collection of signs $\eps_{1}, \dots, \eps_{s}, \eps'_{1}, \dots, \eps'_{t}\in \{-1,1\}$ we have 
\begin{equation}\label{eq-separation-mn}
d(hg^{\eps_{1}}_{a_{1}} \dots g^{\eps_{s}}_{a_{s}}, \,g^{\eps'_{1}}_{a'_{1}} \dots g^{\eps'_{t}}_{a'_{t}}) >1.
\end{equation}
Note that \eqref{eq-separation-mn} implies  \eqref{eq-separation}, and therefore \Cref{notCD-sufficient-Liegrp}.

Assume by contradiction that \eqref{eq-separation-mn} fails, i.e.
\begin{equation}\label{eq-separation-mn-fails}
d(hg^{\eps_{1}}_{a_{1}} \dots g^{\eps_{s}}_{a_{s}}, \,g^{\eps'_{1}}_{a'_{1}} \dots g^{\eps'_{t}}_{a'_{t}})\leq 1.
\end{equation}
 Up to imposing $d(h,e)$ large enough in terms of  $P_{0}, M_{0}$ and $\sigma_{1}, \dots, \sigma_{M_{0}}$, we  have $\max(M_{\ua, s}, M_{\ua', t})> M_{0}$. Up to imposing that for every $k>M_{0}$, we have $d(\sigma_{k}, \id)$  large enough in terms of $h$, $(\sigma_{1}, \dots, \sigma_{k-1})$, we must in fact have $M_{\ua, s}=M_{\ua', t}> M_{0}$.  In particular  $\min(s,t)>P_{0}$.

Using $s>P_{0}$, we may write $g^{\eps_{1}}_{a_{1}} \dots g^{\eps_{s}}_{a_{s}}=xg^{\eps}y$ where $\eps\in \{-1,1\}$ and $g$ is  the switching element, more precisely $g=g_{a_{j}}$ where $k_{a_{j}}=M_{\ua, s}$, $\eps=\eps_{j}$, and $x$ (resp. $y$) is the prefix (resp. the suffix) of the product. Similarly, using $t>P_{0}$, we have a decomposition $g^{\eps'_{1}}_{a'_{1}} \dots g^{\eps'_{t}}_{a'_{t}}=x'{g'}^{\eps'}y'$. By right invariance of the distance,  \eqref{eq-separation-mn-fails} amounts to
\begin{equation}\label{eq-separation-mn-fails2}
d(hx,\, x' {g'}^{\eps'} y'y^{-1}g^{-\eps}) \leq 1.
\end{equation}

We now  remove the contribution of ${g'}^{\eps'} y'y^{-1}g^{-\eps}$ in \eqref{eq-separation-mn-fails2}.  For that we need more conditions on $(\sigma_{k})_{k>M_{0}}$ and $(r_{q_{k}})_{k>M_{0}}$. Fix $k>M_{0}$. Regarding $\sigma_{k}$, note that elements of $\cW^*_{q_{k}-1}$ have their first coordinate in $\llbracket 1, k-1\rrbracket$, in particular we may define $\sigma_{k}$ conditionally  to $q_{k}$, $(g_{a} )_{a\in \cW^*_{q_{k}-1}}$ and $h$.
Given a  tuple $\omega=((w_{1},\eps_{1}) \dots (w_{p}, \eps_{p}))$ where $p\geq 0$, the $w_{i}$ are distinct elements in $\cW^*_{q_{k}-1}$ and the $\eps_{i}$ are in $\{-1,1\}$, set  $z_{\omega}=g^{\eps_{1}}_{w_{1}}\dots g^{\eps_{p}}_{w_{p}}$. Write $R_{k}:=2d(\sigma_{k-1},\id)\sharp \cW^*_{q_{k}-1}$ and $c_{k}> 0$ such that the diameter of $\Ad(B_{R_{k}}) B_{c_{k}}$ is at most $2^{-k}$. Recalling the $\SEP$ condition\footnote{More precisely, we assumed $\SEP$ with a switching sequence having diverging squares. This extra condition is used here to justify the case $\omega=\omega'$ in \eqref{SEP-applied}.} on $(g_{a} )_{a\in \cW^*_{q_{k}-1}}$, we  can choose $\sigma_{k}$ such that uniformly over all such tuples  $\omega, \omega'$ (potentially equal),  we have 
\begin{equation}\label{SEP-applied}
d\!\left(
\sigma^{\pm 1}_k z_{\omega} z_{\omega'}^{-1}\sigma^{\pm 1}_k, \, \id
\right)
\in [0,c_{k})\cup(10 + d(h, \id) +2R_{k}, \, \infty).
\end{equation}
Knowing $\sigma_{k}, q_{k}$ and $(g_{a} )_{a\in \cW^*_{q_{k}-1}}$, we further  impose $r_{q_{k}}$ to be small enough so that \eqref{SEP-applied} remains true under  $2 r_{q_{k}}$-perturbation: for all tuples $\omega, \omega'$ as above,
for every  $\tz_{\omega}, \hz_{\omega'}, {\tilde \sigma}_{k}, {\widehat \sigma}_{k}$ satisfying  $\tz_{\omega}=h_{1}\dots h_{p}$ where $d(h_{i}, g^{\eps_{i}}_{w_{i}})<2r_{q_{k}}$, a similar property for $\hz_{\omega'}$, and $d({\tilde \sigma}_{k}, \sigma_{k}), d({\widehat \sigma}_{k}, \sigma_{k}) <2r_{q_{k}}$, we have 
\begin{equation}\label{eq-expansion-pert}
 d\!\left({\tilde \sigma}^{\pm 1}_{k} \tz_{\omega} {\hz_{\omega'}}^{-1} {\widehat \sigma}^{\pm 1}_{k}, \id
\right)
\in [0,c_{k})\cup(10 + d(h, \id) +2R_{k},\,\infty).
\end{equation}

 Applying \eqref{eq-expansion-pert}  with ${\tilde \sigma}^{\pm 1}_{k}=g'^{\eps'}$, $ \tz_{\omega}=y'$,  $\hz_{\omega'}=y$, ${\widehat \sigma}^{\pm 1}_{k}=g^{-\eps}$, we deduce from  \eqref{eq-separation-mn-fails2}  that the interval $(10 + d(h, \id) +2R_{k},\,\infty)$ from the above alternative is never met, therefore
$$d(x',\, x' {g'}^{\eps'} y'y^{-1}g^{-\eps}) \leq 2^{-M_{\ua, s}},$$
whence 
$$d(hx,\, x') \leq 1+2^{-M_{\ua, s}}.$$
Iterating the argument with the prefixes $hx, x'$, we end up with a contradiction. This finishes the proof of \eqref{eq-separation-mn}, whence establishes \Cref{notCD-sufficient-Liegrp}.

\end{proof}

\bigskip

The next lemma was used in the above proof.

\begin{lemma}\label{lift-symmetric-witness}
Let $G$ be a Lie group, let $Q$ be a quotient Lie group of $G$. 
If \(Q\) admits a symmetric adapted
non-Liouville probability measure, then so does \(G\).
\end{lemma}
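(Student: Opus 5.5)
The plan is to lift the measure on $Q$ to $G$ by sandwiching a section of it between a symmetric measure on the kernel. Let $\pi:G\to Q$ be the quotient map and $N=\Ker\pi$, a closed normal subgroup. Let $\bar\mu$ be a symmetric adapted non-Liouville probability measure on $Q$.

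\emph{Step 1: make $\bar\mu$ lazy.} Replace $\bar\mu$ by $\frac12(\delta_{\id}+\bar\mu)$. This measure is still symmetric and adapted. It has the same bounded harmonic functions, since $P_{(\delta+\bar\mu)/2}f=f$ if and only if $P_{\bar\mu}f=f$. So it is still non-Liouville, and we may assume $\bar\mu(\{\id\})>0$.

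\emph{Step 2: lift.} Choose a Borel section $\sigma:Q\to G$ with $\sigma(\id)=\id$, and set $\nu=\sigma_*\bar\mu$ and $\check\nu=\mathrm{inv}_*\nu$. Choose a symmetric probability measure $\kappa$ on $N$ with an atom at $\id$ whose support generates a dense subgroup of $N$. Such a $\kappa$ exists: $N$ is a second countable Lie group, so take a countable dense subset $\{n_i\}$ and put $\kappa=\frac12\delta_{\id}+\sum_i 2^{-i-2}(\delta_{n_i}+\delta_{n_i^{-1}})$. Define
\[
\mu=\tfrac12\bigl(\kappa*\nu*\kappa+\kappa*\check\nu*\kappa\bigr).
\]
Since $\kappa$ is symmetric, inversion swaps the two terms, so $\mu$ is symmetric. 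Since $\pi_*\kappa=\delta_{\id}$ and $\bar\mu$ is symmetric, $\pi_*\mu=\frac12(\bar\mu+\check{\bar\mu})=\bar\mu$.

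\emph{Step 3: adaptedness.} This is the only point needing care. Because $\kappa$ and $\nu$ both charge $\id$, $\supp\mu$ contains $\supp\kappa$ and $\supp\nu$. Let $H$ be the closed subgroup generated by $\supp\mu$; it contains $N$. Since $\sigma$ is only Borel, argue in the quotient: $\sigma(q)$ lies in $\supp\nu$ for $\bar\mu$-a.e.\ $q$, so $\pi(H)$ is a closed subgroup of $Q$ containing a set of full $\bar\mu$-measure. Hence $\pi(H)\supseteq\supp\bar\mu$, and therefore $\pi(H)=Q$ because $\bar\mu$ is adapted. Together with $H\supseteq N$, this gives $H=\pi^{-1}(\pi(H))=G$.

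\emph{Step 4: non-Liouville.} Let $\bar f$ be a non-constant bounded $\bar\mu$-harmonic function on $Q$, and set $f=\bar f\circ\pi$. Then $P_\mu f=(P_{\bar\mu}\bar f)\circ\pi=f$, where almost-everywhere statements transfer because $\pi$ pushes Haar measure classes to Haar measure classes. Moreover $f$ is not a.e.\ constant, since $\pi$ is a submersion. So $(G,\mu)$ is not Liouville.
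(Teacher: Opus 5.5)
Your proof is correct and follows essentially the same route as the paper: lift via a Borel section, symmetrize, add a symmetric measure on the kernel whose support generates a dense subgroup, and pull back a nonconstant harmonic function. The only difference is cosmetic. The paper takes the mixture $\mu_G=\frac14(\nu+\check\nu)+\frac12\kappa$, whose projection is automatically the lazy measure $\frac12(\delta_{\id}+\mu_Q)$, rather than first making $\bar\mu$ lazy and then sandwiching by $\kappa$; your adaptedness argument via the closed subgroup $\pi(H)$ fills in details the paper leaves to the reader.
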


\begin{proof}
Write $Q=G/H$ where $H$ is a closed normal subgroup of $G$. Consider such a measure $\mu_{Q}$ on $Q$. Let $\nu=\sigma_{\star}\mu_{Q}$ where $\sigma:Q\rightarrow G$ is a section, and  let $\check \nu$ be the image of $\nu$ under the inversion map $g\mapsto g^{-1}$. Let $\kappa$ be a fully supported symmetric probability measure on $H$. The desired measure on $G$ can be defined as $\mu_{G}=\frac{1}{4} (\nu + \check \nu) + \frac{1}{2} \kappa$. Details are left to the reader.
\end{proof}

We finish the paper by giving the proof of   \Cref{cpct-by-nilp-cor}, which further characterizes Choquet-Deny  connected Lie groups of the form $K\ltimes N$ with $K$  compact, $N$ nilpotent. 

\begin{proof}[Proof of \Cref{cpct-by-nilp-cor}]
The group $[G,G]$ is generated by
$H_{1}:=[K,K]$, $H_{2}:=[K,N]$, $H_{3}:=[N,N]$. If $[H_{i},N]$ is bounded for each $i$, then for every $h \in\cup_{i} H_{i}$, we get $\Ad(N)h$ bounded, and using that $K$ is compact, we conclude that $\Ad(G)h$ is bounded. By \Cref{main-thm}, this means $G$ is Choquet-Deny, so we have justified the converse direction in the corollary. 

We now assume $G$ to be Choquet-Deny, and show $[H_{i},N]$ is bounded for each $i$. To do so, we may always quotient by $T_{N}$ the largest torus in $N$, so that $N$ becomes simply connected (\Cref{max-comp-in-nilp}). By \Cref{main-thm}, it is sufficient to check that any $h\in G$ such that $\Ad(N)h$ is bounded must satisfy $[h,N]=\{\id\}$, i.e. must commute with $N$. To check this property, consider such an $h$, and let $v\in \kn$.  Note that $t\mapsto e^{t v}he^{-tv}h^{-1}$ ($t\in \R$) is  polynomial once viewed as taking values in the Lie algebra $\kn$ of $N$, and bounded in view of our choice of $h$. It must therefore be constant, and taking the derivative at $t=0$ gives $v=\Ad(h)v$. Hence $\Ad(h)$ acts trivially on $\kn$, meaning $h$ commutes with $N$. This finishes the proof.

\end{proof}
\bigskip
\noindent\textbf{Competing interests.} The authors declare that they have no competing interests.

\bibliographystyle{abbrv} %alpha-fr abbrv-fr plain-fr acm amsplain aomplain
\bibliography{Choquet-Deny-Lie-Groups}

\end{document}